\definecolor{darkblue}{rgb}{0,0,0.7}
\definecolor{darkgreen}{rgb}{0,0.5,0}
\newcommand{\bparagraph}[1]{\paragraph{#1}\mbox{}\\\indent}
\let\it\textit
\let\bf\textbf
\let\emptyset\varnothing
\let\bar\overline
\let\tilde\widetilde
\let\subset\subseteq
\let\epsilon\varepsilon
\let\longto\longrightarrow
\let\hookto\hookrightarrow
\let\d\partial
\def\ll{\left\llbracket}
\def\rr{\right\rrbracket}
\def\<{\left\langle}
\def\>{\right\rangle}
\def\NN{\mathsf{N}}
\def\TT{\mathbb{T}}
\def\GGr{{\mathbb{G}\mathrm{r}}}
\def\PGL{{\mathbb{P}\mathrm{GL}}}
\newtheoremstyle{breakdefn}
  {\topsep}{\topsep}%
  {\normalfont}{}%
  {\bfseries}{}%
  {\newline}{}%
\theoremstyle{breakdefn}
\newtheorem{defn}{Definition}[section]
\newtheorem{eg}[defn]{Example}
\newtheorem{rmk}[defn]{Remark}
\newtheoremstyle{break}
  {\topsep}{\topsep}%
  {\itshape}{}%
  {\bfseries}{}%
  {\newline}{}%
\theoremstyle{break}
\newtheorem{thm}[defn]{Theorem}
\newtheorem{lem}[defn]{Lemma}
\newtheorem{prop}[defn]{Proposition}
\newtheorem{cor}[defn]{Corollary}
\title{Algebraically Skew Embeddings of Curves}
\author{Andy B. Day}
\date{}
\keywords{skew embedding, algebraic curves, secant variety, Terracini loci}
\email{andyday@psu.edu}
\begin{document}

\begin{abstract}
    Given a smooth complex variety $X$, an algebraically skew embedding of $X$ is an embedding of $X$ into a complex projective space $\P^N$ such that for any two points $x,y\in X$, their embedded tangent spaces in $\P^N$ do not intersect. In this work, we establish an upper bound and a lower bound of the minimal dimension $N=\msdim X$ such that there exists an algebraically skew embedding into $\P^N$ in terms of the dimension of the given smooth variety $X$. Then we further classify the algebraic curves in terms of their minimal skew embedding dimensions, and apply the same technique to other one-parameter family of lines.
\end{abstract}

\maketitle

\section{Introduction}

\bparagraph{\bf{History, motivations, and relevant works}}
This work is motivated by the skew embedding problems from differential topology. The questions of skew embedding started in a lecture given by H. Steinhaus in 1966. In the lecture, he asked: Are there smooth loops in the space without a pair of parallel tangents? He called such a curve a skew loop. The problem was soon solved by B. Segre in \cite{seg} in 1968. Segre used convex geometry to construct several skew loops in $\R^3$. Despite the existence of these skew loops, Segre noted that such loops lie on neither ellipsoids nor elliptic paraboloids. Later, Q-Y. Wu showed that there exist skew loops of every knot class \cite{Wu}, and B. Solomon further showed that skew loops of every knot class also exist in the flat $3$-torus \cite{Tori}. On the other hand, M. Ghomi and Solomon showed that Ellipsoids are the only closed surfaces that contain no skew loops \cite{Gho02}.

In \cite{White}, S. S. Chern and J. H. White started the multidimensional case of skew loops, called skew branes. In particular, they showed that codimension $2$ submanifolds in the unit sphere of Euclidean space cannot be a skew brane. S. Tabachnikov later generalized this result to codimension $2$ submanifolds in any quadratic hypersurfaces \cite{tab2003}. Parallelly, Chern and H-F. Lai discovered that the Euler characteristic of a codimension $2$ skew brane in an even dimension Euclidean space must vanish \cite{Lai}. Later, Tabachnikov and Y. Tyurina studied the number of pairs of parallel tangents of surfaces in $\R^4$ as well as codimension $2$ skew branes in an odd codimension Euclidean space by investigating their Gauss images \cite{Tab2010}.

Meanwhile, in \cite{Tab}, Ghomi and Tabachnikov asked: Given any manifold $M$, does there exist an embedding of $M$ to an euclidean space $\R^N$ such that for each pair of points in the image of $M$, their tangent spaces to $M$ neither intersect nor contain parallel lines? Ghomi and Tabachnikov called such an embedding a totally skew embedding. They further asked, if such embeddings exist, what is the minimal dimension of $\R^N$? In \cite{Tab}, Tabachnikov proved, among other things, that such an embedding always exists, and for any $n$-dimensional manifold $M$, the minimal dimension of $\R^N$ is always between $2n+1$ and $4n+1$. 

On the one hand, the condition of the totally skewness defined above is equivalent to saying that for each pair of points of the image of $M$, their tangent spaces do not intersect in the projectification of $\R^N$. On the other hand, we know that there is a rich theory in algebraic geometry on the intersection of subvarieties in a projective space, especially for spaces over an algebraically closed characteristic zero field. Thus, it is natural to study the skew embeddings of complex projective varieties in complex projective spaces. In this work, we take on the Gauss map approach for the complex projective subvarieties, and investigate other applications of the method we developed.

It turns out that the complex projective version of skew embeddings fits into the theory around Terracini loci nicely. The notion of Terracini loci was first introduced in \cite{Terr} by E. Ballico and L. Chiantini. Roughly speaking, the Terracini loci of a smooth projective subvariety $X\subset\P^N$ capture the finite subsets of $X$ such that the tangent spaces at the points in a given set are not in general position. The Terracini loci of Veronese embeddings have been studied in \cite{Terr} and \cite{TerrAmple}, and the ones of Segre embeddings have been studied in \cite{Terr3}. In \cite{TerrCurve} E. Ballico and L. Chiantini have looked into the Terracini loci of the curves in $\P^3$ and of the canonical curves.

C. Ciliberto kindly informed us that in his recent paper \cite{Ciro}, he studied the Terracini loci of curves in $\P^4$, which coincides with Lemma \ref{length} in the present paper (while Lemma \ref{length} counts the number of ``non-skew" ordered pairs of tangent lines instead of the length of the Terracini locus). Our approach to proving Lemma \ref{length} is based on an explicit blowup construction, which differs from the one in Ciliberto's work. In \cite{Ciro}, Ciliberto observed that, by pulling back the theta divisor to the incidence variety associated to the Terracini locus, one can apply the Grothendieck-Riemann-Roch theorem and the Porteous formula to compute the length of the Terracini loci. We believe our method suggest new directions different from those in the existing Terracini loci literature. For instance, our method can be applied to calculate the locus of intersection of families of linear projective subspaces, as described in Section \ref{scroll3} and Section \ref{scroll4}. More significantly, the difficulty encountered in the surface case discussed in \cite{Ciro} does not appear to arise with our approach. Thus, the present paper offers a different approach for studying the Terracini loci of higher-dimensional varieties.

Independently, the complex version of affine spacial skew curves has occurred in \cite{Zai99}.
\hfill\\

\bparagraph{\bf{Basic notations}}
In this paper, we denote $\P$ or $\P^N$ for a complex projective space. If $V$ is a vector space, then $\P(V)$ is the projectivization of $V$, so $\P^N=\P(\C^{N+1})$. Unless otherwise specified, all the dimensions of a variety refer to it's complex dimension. The Grassmannian of $n$ dimensional linear subspaces in an $N$-dimensional complex vector space $V$ is denoted by $\Gr(n,N)=\Gr(n,V)$, and the Grassmannian for $(n-1)$-dimensional linear planes in $\P(V)$ is denoted by $\GGr(n-1,N-1)=\GGr(n-1,\P(V))$. If no confusion could arise, we sometimes simply write $G$ for the Grassmannian in question.

For any subset $U\subset\P^N$, we denote its Zariski closure by $\bar U$. For any variety $X$, we denote its tangent sheaf (or tangent bundle if $X$ is smooth) by $\T_X$ and the tangent space at a smooth point $x\in X$ by $\T_{x,X}$. For a pair of smooth varieties $X\subset Y$, we write $\N_{X,Y}$ for the normal bundle of $X$ over $Y$ and $\N_{x,X,Y}$ for the fibre of $\N_{X,Y}$ at $x$. For any projective subvariety $X\subset\P$ and any smooth point $x\in X$, we write $T_x=T_{x,X}\subset\P$ for the embedded tangent space of $X$ at $x$. For any two planes (linear subspaces) $K,L\subset\P$, we use $\<K,L\>$ to denote the plane spanned by $K$ and $L$ in $\P$.

For any variety $X$ with pure dimension $n$, we denote the Chow ring of $X$ by $\A(X)$. The group of dimension $r$ classes in $\A(X)$ is denoted by $\A_r(X)$, and the group of codimension $s$ classes is denoted by $\A^s(X)$. Thus, $\A_r(X)=\A^{n-r}(X)$. Similarly, we denote the numerical groups of $X$ with codimension $s$ and dimension $r$ by $\NN^s(X)$, and $\NN_r(X)$. If $X$ is smooth, then we denote its numerical ring by $\NN(X)$. For any subscheme $Y\subset X$, the class of $Y$ in $\A(X)$ is denoted by $[Y]$. For any class $\alpha\in\A(X)$, its class in $\NN(X)$ is denoted by $\ll\alpha\rr$. For simplicity, we also write $\ll Y\rr$ for $\ll[Y]\rr$.
\hfill\\

\bparagraph{\bf{The algebraically skew embeddings}}
We define the skew embeddings of a projective variety as follows. 

\begin{defn}
Let $X\subset \P^N$ be a smooth projective variety. We say that $X\subset\P^N$ is skew if for any two distinct points $x,y\in X$, $T_{x,X}\cap T_{y,X}=\emptyset$.
\end{defn}

Since algebraic arguments often fail to tell the difference between two distinct points and a point with multiplicity two, we need a modified definition that excludes the ``infinitesimal nonskewness". For the case of curves, we use the following definition.

\begin{defn}
Let $X\subset \P$ be a smooth projective curve. We say that $X$ is algebraically skew if $X$ is sekw and the third osculating planes of $X$ are nowhere degenerate; that is, for any point $x_0\in X\subset\P(V)$ and any local lifting $f:\C\to V$ of $X$ with $[f(0)]=x_0$, the vectors $f(0),f'(0),f''(0),$ and $f'''(0)$ are linearly independent.
\end{defn}

For a general projective variety $X\subset\P$, there are several possible definitions for algebraically skewness. One such definition is to define the algebraically skewness of $X\subset\P^N=\P(V)$ by vanishing of the length $2$ Terracini locus $\TT(X,V,2)$, which consists of length $2$ zero dimensional subschemes $S\subset X$ satisfying $\dim V(-2S)>\dim V-2n-2$, c.f. \cite{TerrCurve}.

Another possible definition for algebraically skewness is to require that the projective second and third fundamental forms being nowhere vanishing. The nowhere vanishing condition of the second projective fundamental form is equivalent to say that the Gauss map from $X$ to the appropriate Grassmannian is a diffeomorphism to its image, and the nowhere vanishing condition of the third projective fundamental form would exclude the infinitesimal nonskewness. For more on the projective fundamental forms, see \cite{cartan}. 

Note that the notion of algebraically skew defined above can be viewed as a special type of higher ampleness. More precisely, recall that a line bundle $\L$ over $X$ is $k$-very ample if any $k+1$ points on $X$ span a $k$-plane in $X\hookto\P(H^0(L))$. In particular, if $\L$ is a $3$-very ample line bundle over a smooth projective variety $X$, then the embedded variety $X\hookto\P(H^0(L))$ is skew. For more on the notion of $k$-very ampleness, see \cite{kample}.

For any projective variety $X$, we denote the minimal algebraically skew embedding dimension of $X$ by
$$\msdim X:=\min\{N:\text{there exists an algebraically skew embedding }X\hookrightarrow\P^N\}.$$
The goal of this paper is to compute $\msdim X$ for curves by applying the excess intersection formula to the product of Gauss images of $X$ and a certain degenerate locus introduced in Section \ref{Drs} in the blowups of the product of Grassmannians.

One might consider solving this problem by applying the double point formula to the map from the projectivized tangent bundle $\P(\T_X\oplus\O_X(-1))$ to $\P^N$ with the tangent variety of $X$ being its image. However, since the tangent variety is always singular at $X$, one still needs to use the excess intersection approach to modify the double point formula, and we believe that working on $\P^N$ and its corresponding Grassmannian would be more convenient for future generalizations.
\hfill\\

\bparagraph{\bf{The structure of the paper}}
In Section \ref{bound}, we consider the dimension of the tangent variety of the secant variety of a projective variety and use it to deduce that $\msdim X\le4n+1$ for any $n$-dimensional variety $X$. Then we introduce the degeneracy loci $\D_r$ of pairs of linear subspaces in $\P$ with intersecting dimension $\ge r$ in $\GGr(n,N)\times\GGr(n,N)$. By considering the dimension of the intersection between $D_1=\D_0$ and $\Gamma:=C\times C\subset\GGr(n,N)\times\GGr(n,N)$, the product of the Gauss image $C:=\gamma(X)$ of $X$, we deduce that $3n\le \msdim X$. Note that the upper bound $4n+1$ agrees with the real smooth case, but the lower bound $3n$ is better than the real smooth case, which is $2n+1$.

Although the intersection $D_1\cap\Gamma$ we study in Section \ref{bound} gives us a lower bound of $\msdim X$, it is not enough to solve $\msdim X$ for a specific variety $X$ because $D_1$ is singular on the diagonal of $G\times G$. We resolve the singularity of $D_1$ for the case of curves in Section \ref{three} by blowing up the diagonal of $G\times G$. Then we compute the intersection between the proper transforms $\tilde D_1$ and $\tilde\Gamma$ of $D_1$ and $\Gamma$ for the case of $X\subset\P^3$. By comparing the product $[\tilde D_1][\tilde\Gamma]$ and the class of the diagonal component $\tilde\Delta_\Gamma$ of the intersection in the Chow ring of the blowup $B:=\Bl_{\Delta_G}(G\times G)$, we conclude that the twisted cubic is the only skew curve in $\P^3$. We also apply the same method to other one-parameter families $C\subset G$ and deduce that for a scroll to be skew, we must have $C\cong\P^1$.

In Section \ref{four}, we study the case of one-parameter families of lines in $\P^4$. By dimension counting we immediately see that $\tilde D_1$ and $\tilde\Gamma$ have an excess intersection component at the diagonal component $\tilde\Delta_\Gamma$. While the same method in section \ref{three} works for a generic one-parameter family of lines, we discover that $\tilde D_1$ and $\tilde\Gamma$ tangent along $\tilde\Delta_\Gamma$ when $\Gamma$ is the product of Gauss images. We resolve this tangency by blowing up $B:=\Bl_{\Delta_G}(G\times G)$ along $\tilde\Delta_\Gamma$. Then we consider the intersection class between the proper transforms $D_1^\dagger$ and $\Gamma^\dagger$ of $\tilde D_1$ and $\tilde\Gamma$ as well as the excess intersection class $\epsilon_{\Delta^\dagger_\Gamma}$ of the component on the diagonal. The main obstruction of computing the excess intersection class in question is the Chow ring of $\tilde D_1$ and the class of $\tilde\Delta_\Gamma$ in it. We establish the structure of $\A(\tilde D_1)$ by introducing an intermediate incidence variety $\hat D_1\subset\P\times G\times G$ between $\tilde D_1$ and $D_1$.  We discover that $\hat D_1$ can be treated as a biprojective bundle $\P(\Q)\times_{\P^4}\P(\Q)$. In addition, we interpret $\tilde D_1$ as the blowup of $\hat D_1$ along the fibrewise diagonal $\hat\Delta_D$. We establish the structure $\A(\tilde D_1)$ by realizing that $\hat\Delta_D$ is isomorphic a flag variety and the normal bundle of $\hat\Delta_D$ in $\hat D_1$ is isomorphic to a subbundle of $\T_{\hat\Delta_D}$. With these tools, we manage to compute the excess intersection term. Then we assert that a curve $X\subset\P^4$ is algebraically skew if and only if the intersection class $[D_1^\dagger][\Gamma^\dagger]$ agrees with the excess intersection class $\epsilon_{\Delta^\dagger_\Gamma}$ from the diagonal and conclude that the only algebraically skew curves in $\P^4$ are the rational normal curve and the elliptic normal curves.
\hfill\\

\bparagraph{\bf{Acknowledgment}}
This project is partially supported by the NSF Award DMS-2142966. The author is a graduate student at Pennsylvania State University. The author is grateful to his advisors Sergei Tabachnikov and John Lesieutre for the weekly meetings in writing this paper. The author thanks Jack Huizenga for several crucial conversations and advice. The author is grateful to Edoardo Ballico for pointing out the relation between skew embeddings and Terracini loci and for Ciro Ciliberto for pointing out his work on the case of Terracini loci of curves in $\P^4$. The author also benefits from several insightful email exchanges with Joseph Landsberg, Serge Lvovski, and Igor Dolgachev. The author also thanks Brendan Hassett for pointing out the potential double point formula approach to the problem. The author also thanks Dawei Chen for a discussion of whether the intersection component on the blown-up diagonal in the present paper is a ribbon.

\section{The basic bounds of $\msdim X$}\label{bound}

In this section, we explain the existence of skew embeddings of a projective variety $X$ by observing the skewness of Veronese embeddings of degree $\ge3$. We establish the general upper bound of the minimal complex algebraically skew embedding dimension $\msdim X$ by investigating the tangent of the secant variety. Then we introduce the degeneracy filtration of $\Gr(n,N)\times\Gr(n,N)$, and use them to compute a lower bound of $\msdim X$.

\subsection{Skewness of higher degree Veronese embeddings}\hfill\\

We first recall that rational curves of degree $\ge3$ are algebraically skew. This follows from \cite{Terr} or \cite{TerrCurve}. One can also prove this with straightforward computation.
\begin{lem}\label{racurve}
The rational curve given by
    \begin{align*}
    \tilde\iota_d:(s:t)\longmapsto(s^d:s^{d-1}t:\cdots:st^{d-1}:t^d)
\end{align*}
is skew for $d\ge3$.
\end{lem}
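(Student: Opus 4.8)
The plan is to verify the definition of algebraically skew directly for the rational normal curve $\tilde\iota_d$, $d\ge 3$, by computing embedded tangent spaces in coordinates and checking both the skewness condition $T_s \cap T_t = \emptyset$ for $s\ne t$ and the non-degeneracy of the third osculating plane. Working on the affine chart $s=1$, the curve is $t\mapsto (1:t:t^2:\cdots:t^d)$, whose lift to $\C^{d+1}$ is $f(t)=(1,t,t^2,\dots,t^d)$. First I would record that the embedded tangent line $T_t$ at the point $[f(t)]$ is the projective line through $[f(t)]$ and $[f'(t)]$, i.e. $\P$ of the span of $f(t)$ and $f'(t)$ in $\C^{d+1}$; so $T_s\cap T_t\ne\emptyset$ in $\P^d$ precisely when the four vectors $f(s),f'(s),f(t),f'(t)$ are linearly dependent in $\C^{d+1}$.

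Next I would reduce this linear dependence to a determinant / rank computation. Since $d\ge 3$, these are four vectors in a space of dimension $\ge 4$, so I can look at the $4\times 4$ minor formed by, say, the first four coordinates — but a cleaner approach is to use a generalized Vandermonde / confluent Vandermonde structure. The $4$ vectors $f(s),f'(s),f(t),f'(t)$, projected to the coordinates of degree $0,1,2,3$, form a confluent Vandermonde matrix whose determinant is (up to sign and constant) $(s-t)^4$. Hence for $s\ne t$ this $4\times 4$ block is already nonsingular, so the four vectors are independent in $\C^{d+1}$, giving $T_s\cap T_t=\emptyset$. The same confluent Vandermonde computation, specialized to $s=t$, shows that $f(t),f'(t),f''(t),f'''(t)$ span a $4$-dimensional subspace — the relevant $4\times 4$ minor is a nonzero constant — which is exactly the statement that the third osculating plane is nowhere degenerate. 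For the chart $s=0$ (the point $(0:\cdots:0:1)$) one runs the identical argument with $t\mapsto (t^d:t^{d-1}:\cdots:t:1)$, or simply invokes the $\PGL$-symmetry $(s:t)\mapsto(t:s)$ of the curve.

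The main obstacle, such as it is, is purely bookkeeping: correctly setting up the confluent Vandermonde determinant and tracking which $4\times 4$ minor to extract from a $(d+1)$-dimensional ambient space, together with handling the boundary point $(0:1)$ not covered by the affine chart. None of this is conceptually hard — the statement ``also follows from \cite{Terr} or \cite{TerrCurve}'' signals that it is standard — so I would keep the determinant computation to a couple of lines (stating $\det$ of the confluent Vandermonde $=\prod (s_i-s_j)$ with appropriate confluence exponents) and simply note that the vanishing locus is the diagonal, which is excluded by $s\ne t$ in the skewness condition and which is harmless for the osculating condition since there the minor is a nonzero constant independent of $t$.
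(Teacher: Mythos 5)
Your proof is correct. The paper does not actually prove Lemma \ref{racurve} — it cites \cite{Terr} and \cite{TerrCurve} and remarks that the statement follows from a ``straightforward computation'' — and your confluent Vandermonde argument is exactly that computation carried out cleanly: the $4\times4$ minor of $f(s),f'(s),f(t),f'(t)$ in the degree $0,1,2,3$ coordinates equals $\pm(s-t)^4$, which is nonzero off the diagonal, and the single-node confluent minor for $f(t),\dots,f'''(t)$ is a nonzero constant, handling the osculating condition. The only point worth making explicit in a write-up is the pair $\{(1:0),(0:1)\}$, which no single affine chart covers; but there the tangent lines are spanned by $e_0,e_1$ and $e_{d-1},e_d$ respectively, which are disjoint precisely because $d\ge3$, so this is immediate.
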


By the observation above, one can prove that the following projections from degree $\ge3$ Veronese embeddings are skew.

\begin{lem}\label{Ver}
For any $n\ge1$, the embedding of $\P^n$ into $\P^{\frac{1}{2}(n+1)n(d-1)+n}$ given by
$$\iota_d:(x_0:x_1:\cdots:x_n)\longmapsto(\cdots:x_i^kx_j^{d-k}:\cdots)$$
is skew, where $i,j$ run over all $0\le i<j\le n$ and $k=0,\cdots,d$ but without repeating the terms of the form $x_i^d$.
\end{lem}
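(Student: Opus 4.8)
The plan is to deduce the statement from the curve case, Lemma~\ref{racurve}, by restricting $\iota_d$ to lines. To set up, one notes that $\iota_d$ is the composition of the $d$-th Veronese embedding $v_d\colon\P^n\hookrightarrow\P^{\binom{n+d}{d}-1}$ with the linear projection discarding every coordinate indexed by a degree-$d$ monomial in three or more of the variables $x_0,\dots,x_n$; thus the coordinates of $\iota_d$ are exactly the degree-$d$ monomials supported on at most two variables, each pure power $x_\ell^{\,d}$ kept once, and a count of these monomials reproduces the target dimension $N=\tfrac{1}{2}n(n+1)(d-1)+n$. That $\iota_d$ is itself a closed embedding is a short apolarity check (the centre of the projection lies on no secant or tangent line of $v_d(\P^n)$), so that ``skew'' is meaningful. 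Skewness is then reformulated linearly: lifting distinct points $p=[v]$ and $q=[w]$ of $\P^n$ to vectors $v,w\in\C^{n+1}$, Euler's identity shows the affine cone over $T_p\iota_d(\P^n)$ is spanned by $\partial_0\iota_d(v),\dots,\partial_n\iota_d(v)$, and similarly at $q$, so $T_p\cap T_q=\emptyset$ is equivalent to linear independence of the $2n+2$ vectors $\{\partial_\ell\iota_d(v)\}_{\ell}\cup\{\partial_\ell\iota_d(w)\}_{\ell}$ in $\C^{N+1}$.

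The key reduction is that $T_p\iota_d(\P^n)=\bigcup_{\ell\ni p}T_p\iota_d(\ell)$, the union running over all lines $\ell\subset\P^n$ through $p$ and $T_p\iota_d(\ell)$ denoting the embedded tangent line of the image curve; this holds because the tangent directions $u\mapsto\sum_\ell u_\ell\partial_\ell\iota_d(v)$ already sweep out the cone of $T_p$. Consequently any point of $T_p\cap T_q$ lies on $T_p\iota_d(\ell_1)\cap T_q\iota_d(\ell_2)$ for some lines $\ell_1\ni p$ and $\ell_2\ni q$. For a line $\ell$ through two distinct points the restriction $\iota_d|_\ell$ is a degree-$d$ rational normal curve in a $\P^d\subset\P^N$ (as one sees by restricting to any two variables with distinct ratios along $\ell$), so when $\ell_1=\ell_2=:\ell$ we get $p,q\in\ell$ and Lemma~\ref{racurve}, with $d\ge3$, forces $T_p\iota_d(\ell)\cap T_q\iota_d(\ell)=\emptyset$, a contradiction. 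Thus the whole statement reduces to excluding the case $\ell_1\neq\ell_2$.

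The step I expect to be the main obstacle is precisely this remaining case $\ell_1\neq\ell_2$. The two lines span a $\langle\ell_1,\ell_2\rangle$ that is a $\P^2$, or a $\P^3$ when $n\ge3$ and they are disjoint, and restricting $\iota_d$ there reduces to a lower-dimensional instance of the very same problem, so naive induction is circular. Instead I would work with the block structure of $\iota_d$ indexed by pairs of variables: a tangent direction at $p$ along a line in direction $u$ is $\sum_\ell u_\ell\partial_\ell\iota_d(v)$, and one would want to show that for $\ell_1\neq\ell_2$ the coordinate blocks ``active'' for $\ell_1$ at $p$ and for $\ell_2$ at $q$ are independent enough to forbid a common point. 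The delicate issue — and where I anticipate the difficulty — is that when $\ell_1$ and $\ell_2$ pass through a common point $r$ and the directions point toward $r$, the vectors $\partial_\ell\iota_d(v)$ may depend on $v$ only through $(d-1)$-th powers of its coordinates, so a collision along these ``transverse to the secant'' directions is not obviously ruled out; this is exactly the configuration that no single line through both $p$ and $q$ can detect. Should the explicit monomial projection not suffice here, the fallback is to replace it by a sufficiently general linear projection of $v_d(\P^n)$ of the same target dimension, combining the skewness of the full $d$-th Veronese for $d\ge3$ (emptiness of its length-$2$ Terracini locus, cf.\ \cite{Terr,TerrAmple}) with a standard estimate that a generic linear projection keeps the family $\{\langle T_p,T_q\rangle\}_{p\neq q}$ in general position.
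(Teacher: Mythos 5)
Your reduction to the curve case is the right instinct, and your treatment of the case $\ell_1=\ell_2$ is fine, but the case $\ell_1\neq\ell_2$ --- which you yourself flag as the main obstacle --- is a genuine gap, and the fallback you offer (a sufficiently general linear projection of the full Veronese) would prove a different statement, not the skewness of the specific monomial map $\iota_d$ asserted in the lemma. The decomposition $T_p=\bigcup_{\ell\ni p}T_p\iota_d(\ell)$ forces you into a two-line configuration that no single restriction $\iota_d|_\ell$ can detect, and nothing in your write-up rules it out.

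The paper's proof sidesteps lines entirely by exploiting the block structure that you mention only in passing. A common point of $T_p$ and $T_q$ is witnessed by a single linear relation $\alpha\tilde\iota_d(\tilde x)+\sum_k a_k\partial_k\tilde\iota_d(\tilde x)=\beta\tilde\iota_d(\tilde y)+\sum_k b_k\partial_k\tilde\iota_d(\tilde y)$ in $\C^{N+1}$. For each pair $i<j$, the block of coordinates $x_i^kx_j^{d-k}$ ($k=0,\dots,d$) depends only on $(x_i,x_j)$ and is annihilated by $\partial_\ell$ for $\ell\notin\{i,j\}$, so restricting the relation to that block yields exactly the condition $T_{\iota_d(x_i:x_j)}\cap T_{\iota_d(y_i:y_j)}\neq\emptyset$ for the degree-$d$ rational normal curve --- note these are the coordinate \emph{projections} of $x$ and $y$ onto $(x_i:x_j)$, not restrictions of $\iota_d$ to a line joining $p$ and $q$. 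Lemma~\ref{racurve} then forces $(x_i:x_j)=(y_i:y_j)$ for every pair, hence $x=y$. This is a block-by-block linear-algebra argument in which the problematic two-line configuration never arises; if you want to complete your proof along your own lines, you would need to carry out the block analysis you allude to in your third paragraph, at which point the detour through lines $\ell_1,\ell_2$ becomes unnecessary.
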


\begin{proof}
The map $\iota_d$ is clearly injective and smooth. To prove the skewness, pick any two $x,y\in\P^n$, let $M$ be the image of $\iota_d$, and let $\tilde\iota_d:\C^{n+1}\to \C^{\frac{1}{2}(n+1)n(d-1)+n+1}$ be the lifting of $\iota_d$ given by the same formulation. Suppose that $T_{\iota_d(x),M}\cap T_{\iota_d(y),M}\ne\emptyset$. Then there exist $\alpha,\beta,a_0,\cdots,a_n,b_0,\cdots,b_n\in\C$, not all equal to zero, and $\tilde x,\tilde y\in\C^{n+1}$ in the cones of $x$ and $y$ respectively such that
\begin{align}\label{vero}
\alpha \tilde\iota_d(\tilde x)+\sum_{k=0}^na_k\frac{\d\tilde\iota_d}{\d x_k}(\tilde x)=\beta\tilde\iota_d(\tilde y)+\sum_{k=0}^nb_k\frac{\d\tilde\iota_d}{\d x_k}(\tilde y).
\end{align}
For any $i<j$, compare the entries in (\ref{vero}) corresponding to the terms $x_i^kx_j^{d-k}$ in the image of $\tilde\iota_d$ for $k=0,\cdots,d$, we obtain a system of equations that is equivalent to
$$T_{\iota_d(x_i:x_j),\iota_d(\P^1)}\cap T_{\iota_d(y_i:y_j),\iota_d(\P^1)}\ne\emptyset.$$
This contradicts to Lemma \ref{racurve} unless $(x_i:x_j)=(y_i:y_j)$. Since $i,j$ are arbitrary, we have $x=y$.
\end{proof}

Notice that the map in Lemma \ref{Ver} is the composition of a projection and the degree $d$ Veronese embedding, thus the Veronese embeddings of degree $\ge3$ are skew. In particular, $\iota_3:\P^2\hookto\P^8$ gives us $\msdim(\P^2)\le8<4\cdot2+1$.

In addition, observe that if there exists a skew embedding $X\hookrightarrow\P^N$ and $Y\hookrightarrow X$ is an embedding, then the composition $Y\hookrightarrow X\hookrightarrow\P^N$ is also skew. Thus, Lemma \ref{Ver} also implies that every projective variety admits a skew embedding.

\subsection{The basic upper bound of $\msdim X$}\hfill\\

To find the basic upper bound of $\msdim X$, suppose that $X\hookto\P^N$ is a skew embedding with $N=\msdim X$. Let $\Sec X\subset \P^N$ be the secant variety of $X$. By the Terracini's Lemma \cite{Terracini}, see also \cite[Proposition II.1.10]{Zak}, we have
\begin{align*}
\<T_{x,X},T_{y,X}\>=T_{z,\Sec X}
\end{align*}
for any generic smooth points $x,y\in X$ and any smooth point $z\in\<x,y\>\subset \Sec X$. This implies
$$\closure\left(\bigcup_{(x,y)\in (X\times X)}\<T_{x,X},T_{y,X}\>\right)=\Tan\Sec X,$$
where $\Tan\Sec X$ denotes the tangent variety of $\Sec X$.

Since the left-hand side of the above equation is the closure of a bundle over $(X\times X)\setminus\Delta_X$ with fibres of dimension $2n+1$, we have $\dim(\Tan\Sec X)=4n+1$.

Observe that if $L\subset\P^N$ is a $k$-plane with $L\cap \Tan\Sec X=\emptyset$, then the projection $\pi_L:X\to\P^{N-k-1}$ remains a skew embedding as $\dim\pi_L(\<T_{x,X},T_{y,X}\>)=2n+1$. Thus by choosing $\pi_L$ with $\dim L=N-4n-2$ and $L\cap \Tan\Sec X=\emptyset$, we skewly project $X$ to $\P^{4n+1}$ and deduce
$$\msdim X\le4n+1.$$
Note that the upper bound above can be deduced from \cite{Terr} as well.

\subsection{The degeneracy filtration of $\Gr(n,N)\times\Gr(n,N)$ and the lower bound of $\msdim X$}\label{Drs}\hfill\\
In this section, we introduce and study the degeneracy filtration $D_r(n,N)$'s of $\Gr(n,N)\times\Gr(n,N)$, which is a series of varieties that help to measure how much a projective subvariety fails to be skew. In particular, we obtain that $3n\le \msdim X$ in Corollary \ref{3n}, which is a lower bound stronger than the real smooth case.

\begin{defn}
    Let $V:=\C^N$. For any $N\ge n\ge1$, define
    $$D_r=D_r(n,N)=D_r(n,V):=\{(U,W):\dim(U\cap W)\ge r\}\subset\Gr(n,V)\times\Gr(n,V).$$
    We also write $\D_{r-1}=\D_{r-1}(n-1,N-1)=\D_{r-1}(\P(V))$ for the subvariety in $\GGr(n-1,\P(V))\times\GGr(n-1,\P(V))$ corresponding to $D_r(n,N)$.
\end{defn}

\begin{prop}\label{bipro}
    Let $D_r=D_r(n,N)$ be defined as above and $m:=\max\{2n-N,0\}$.
    \begin{enumerate}[(i)]
        \item We have the filtration
        $$D_m\supsetneq D_{m+1}\supsetneq\cdots\supsetneq D_n.$$
    
        \item $\dim D_r=r(N-r)+2(n-r)(N-n)$ for any $m\le r\le n$.
        
        \item For the base case $r=m$, we have $$D_m=\Gr(n,N)\times\Gr(n,N).$$
    \end{enumerate}
\end{prop}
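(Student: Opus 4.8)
The plan is to treat all three parts uniformly by recognizing $D_r$ as the total space of a fibration whose base is an incidence variety of linear subspaces and whose fibres are products of Grassmannians. First I would set up the incidence variety
$$
I_r := \{(A, U, W) \in \Gr(r, V) \times \Gr(n, V) \times \Gr(n, V) : A \subset U,\ A \subset W\},
$$
together with its projection $q : I_r \to \Gr(r,V)$ and the forgetful map $p : I_r \to \Gr(n,V)\times\Gr(n,V)$. For fixed $A \in \Gr(r,V)$, the fibre $q^{-1}(A)$ consists of pairs of $n$-planes each containing the fixed $r$-plane $A$, hence is $\Gr(n-r, V/A) \times \Gr(n-r, V/A)$, a smooth irreducible variety of dimension $2(n-r)(N-n)$. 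Since $\Gr(r,V)$ is smooth irreducible of dimension $r(N-r)$, the variety $I_r$ is smooth, irreducible, and of dimension $r(N-r) + 2(n-r)(N-n)$. The image $p(I_r)$ is exactly $D_r$: a pair $(U,W)$ lies in $p(I_r)$ iff $U$ and $W$ share a common $r$-plane, i.e. iff $\dim(U\cap W) \ge r$. Moreover, over the open dense locus where $\dim(U\cap W) = r$ exactly, the common $r$-plane is unique, so $p$ is birational onto $D_r$; this gives $\dim D_r = \dim I_r = r(N-r) + 2(n-r)(N-n)$, proving (ii).

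For (iii), when $r = m = \max\{2n-N, 0\}$: if $N \ge 2n$ then $m = 0$, and $\dim(U\cap W) \ge 0$ is automatic, so $D_0 = \Gr(n,N)\times\Gr(n,N)$; if $N < 2n$ then $m = 2n - N > 0$, and for any two $n$-planes $U, W$ in an $N$-dimensional space the dimension formula forces $\dim(U \cap W) \ge \dim U + \dim W - N = 2n - N = m$, so again $D_m = \Gr(n,N)\times\Gr(n,N)$. In either case the stated equality holds. One sanity check worth recording: plugging $r = m$ into the formula in (ii) should return $2n(N-n) = \dim(\Gr(n,N)\times\Gr(n,N))$, which it does in both the $N\ge 2n$ and $N < 2n$ regimes.

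For (i), the inclusions $D_{r+1} \subset D_r$ are immediate from the definition. For strictness $D_{r+1} \subsetneq D_r$ whenever $m \le r \le n-1$, I would argue by dimension: by (ii) the difference $\dim D_r - \dim D_{r+1} = (N-r) - 2(N-n) + \dots$, which I would expand and check is strictly positive for $r$ in the stated range (equivalently, exhibit an explicit pair $(U,W)$ with $\dim(U\cap W)$ equal to exactly $r$ — e.g. take coordinate subspaces — which lies in $D_r \setminus D_{r+1}$). The explicit-pair approach is cleaner and avoids sign bookkeeping, so I would favor it.

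The main obstacle is purely technical rather than conceptual: verifying carefully that $p: I_r \to D_r$ is generically one-to-one, i.e. that the common $r$-plane is genuinely unique when $\dim(U\cap W) = r$ (which is clear, since then $U \cap W$ itself is the only $r$-plane contained in both) and that the exceptional locus where $\dim(U \cap W) > r$ has strictly smaller dimension — this last point follows since that locus maps into $D_{r+1}$, whose dimension we are simultaneously computing, so a small induction or a direct fibre-dimension estimate on $p$ closes the loop. No single step is hard; the care lies in making the incidence-variety dimension count and the birationality claim rigorous in the boundary cases $r = m$ and $r = n$.
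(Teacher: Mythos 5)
Your proposal is correct and follows essentially the same route as the paper: the paper computes $\dim D_r$ via the projection $(U,W)\mapsto U\cap W$ to $\Gr(r,V)$ with fibres identified with products of Grassmannians in $V/L$ of dimension $2(n-r)(N-n)$, and your incidence variety $I_r$ is just the graph/resolution of that same map, with the identical fibre count. Your version is slightly more careful about where the projection is defined and about generic injectivity, but the underlying computation is the same.
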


\begin{proof}
$(i)$ and $(iii)$ are trivial. For $(ii)$, consider the projection $p: D_r(r,V)\to\Gr(r,V)$ defined by $p:(U,W)\mapsto U\cap W$. $p$ is clearly surjective. To count the dimension of fibres of $p$, fix any element $L\in\Gr(r,N)$ and observe that the map $(U,W)\mapsto(U/L,W/L)$ from $p^{-1}(L)$ to $D_0(n-r,V/L)$ is bijective. Thus,
\begin{align*}
    \dim D_r(n,N)=&\dim\Gr(r,V)+\dim D_0(n-r,V/L)\\
    =&\dim\Gr(r,V)+\dim(\Gr(n-r,V/L)\times\Gr(n-r,V/L))\\
    =&r(N-r)+2(n-r)(N-n).
\end{align*}
\end{proof}

\begin{cor}
    Let $C\subset\GGr(n,N)$ be an $m$-parameter family of $n$-dimensional linear subspaces in $\P^N$. Suppose that $C$ is skew, that is, $P_1\cap P_2=\emptyset$ for any $[P_1]\neq[P_2]\in C$, then $2n+m\le N$.
\end{cor}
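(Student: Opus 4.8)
The plan is to read off the bound from the dimension of the intersection of $\Gamma := C\times C$ with $\D_0 := \D_0(n,N)$ inside the smooth variety $G\times G$, where $G := \GGr(n,N)$. We may assume $m\ge 1$, since otherwise $C$ is empty or a single linear subspace; then $C$ has two distinct members, and if $2n\ge N$ these two $n$-planes of $\P^N$ would be forced to meet, contradicting skewness, so in fact $2n<N$ and hence $\D_0$ is a proper subvariety of $G\times G$ by Proposition \ref{bipro}.

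The first step is to describe $\Gamma\cap\D_0$ set-theoretically. Since $P\cap P=P\neq\emptyset$, the diagonal $\Delta_G\subset G\times G$ is contained in $\D_0$, so $\Delta_C:=\Gamma\cap\Delta_G\subseteq\Gamma\cap\D_0$. Conversely, any point of $\Gamma\cap\D_0$ is a pair $([P_1],[P_2])$ with $[P_1],[P_2]\in C$ and $P_1\cap P_2\neq\emptyset$, and skewness of $C$ forces $[P_1]=[P_2]$, so the point lies on $\Delta_G$. Hence $\Gamma\cap\D_0=\Delta_C$, which is nonempty of dimension $m$.

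The second step is a codimension count. Under the dictionary fixed just before Proposition \ref{bipro}, the locus $\D_0(n,N)\subset\GGr(n,N)\times\GGr(n,N)$ corresponds to $D_1(n+1,N+1)$, so part (ii) of that proposition gives $\dim\D_0=\bigl((N+1)-1\bigr)+2\bigl((n+1)-1\bigr)\bigl((N+1)-(n+1)\bigr)=N+2n(N-n)$; since $\dim(G\times G)=2(n+1)(N-n)$, this yields $\codim_{G\times G}\D_0=N-2n$. Finally, because $G\times G$ is smooth and $\Gamma\cap\D_0\neq\emptyset$, the classical lower bound on the dimension of an intersection in a smooth variety shows that every component of $\Gamma\cap\D_0$ has dimension at least $\dim\Gamma-\codim_{G\times G}\D_0=2m-(N-2n)$. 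Comparing this with $\dim(\Gamma\cap\D_0)=\dim\Delta_C=m$ gives $m\ge 2m-(N-2n)$, i.e.\ $2n+m\le N$.

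I do not expect a genuine obstacle here, as the argument is a dimension count. The only delicate points are bookkeeping: matching the projective index conventions of $\GGr(n,N)$ with the vector-space Grassmannian conventions under which Proposition \ref{bipro} is stated, and disposing of the degenerate cases ($C$ empty, a single subspace, or $2n\ge N$) so that the intersection-dimension inequality is applied to a nonempty locus in the expected range.
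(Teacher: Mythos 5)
Your proof is correct and follows essentially the same route as the paper: identify $\Gamma\cap\D_0$ with the diagonal of $C\times C$ using skewness, compute $\codim\D_0=N-2n$ from Proposition \ref{bipro}, and compare the actual dimension $m$ with the expected dimension $2m-(N-2n)$ via the intersection-dimension inequality in the smooth ambient $G\times G$. Your version is slightly more explicit about why the inequality applies (nonemptiness of the intersection, the degenerate cases $m=0$ and $2n\ge N$), which the paper leaves implicit, but the argument is the same.
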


\begin{proof}
    Set $\Gamma:=C\times C\subset G\times G:=\GGr(n,N)\times\GGr(n,N)$.
    
    Since, as subvarieties in $G\times G$, we have $\codim \Gamma=2(n+1)(N-n)-2m$ and $\codim \D_0(n,\P^N)=N-2n$, the \it{expected dimension} of the intersection $\Gamma\cap \D_0(n,\P^N)$ is
    \begin{align*}
        \dim(\GGr(n,N)\times\GGr(n,N))-\codim \Gamma-\codim \D_0(n,\P^N)=2n+2m-N.
    \end{align*}     
    
    Observe that $C$ is skew if and only if, set theoretically, $\D_0(n,\P^N)\cap\Gamma=\Delta_\Gamma$, where $\Delta_\Gamma$ is the diagonal of $\Gamma:=C\times C$. Since $\dim\Delta_\Gamma=m$, we must have $m\ge 2n+2m-N$ for $C$ to be skew. Thus $2n+m\le N$.
\end{proof}

In particular, we have the following lower bound for $\msdim X$.
\begin{cor}\label{3n}
    Let $X$ be a smooth projective variety of dimension $n$, then $\msdim X\ge3n$.
\end{cor}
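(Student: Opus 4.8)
The plan is to derive this directly from the preceding corollary, applied to the Gauss image of $X$. Suppose $X\hookrightarrow\P^N$ is an algebraically skew embedding realizing $N=\msdim X$; if no algebraically skew embedding of $X$ exists, the statement is vacuous. Since $X$ is smooth of dimension $n$, every embedded tangent space $T_{x,X}$ is an $n$-plane in $\P^N$, so the Gauss map $\gamma\colon X\to\GGr(n,N)$, $x\mapsto[T_{x,X}]$, is a morphism; put $C:=\gamma(X)$ as in the introduction.

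First I would check that $C$ is an $n$-parameter family of $n$-planes, i.e.\ that $\dim C=n$. This is the one place the hypothesis is used: if $\gamma(x)=\gamma(y)$ then $T_{x,X}=T_{y,X}$, so these tangent spaces meet --- indeed they coincide --- and skewness of $X$ forces $x=y$. Hence $\gamma$ is injective and $\dim C=\dim X=n$. Next, skewness of $X$ transfers verbatim to skewness of the family $C$ in the sense of the previous corollary: two distinct classes in $C$ have the form $[T_{x_1,X}]$ and $[T_{x_2,X}]$ with $x_1\neq x_2$, and then $T_{x_1,X}\cap T_{x_2,X}=\emptyset$. Applying the previous corollary with $m=n$ now gives $2n+n\le N$, i.e.\ $\msdim X\ge 3n$.

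I do not anticipate any real difficulty here; the only step with content is the injectivity of the Gauss map, which is precisely what pins $\dim C$ at $n$ rather than something smaller --- the latter would only produce a weaker inequality.
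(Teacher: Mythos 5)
Your proposal is correct and is exactly the argument the paper intends: the corollary is stated as an immediate consequence of the preceding one, applied to the Gauss image of a skew embedding with $m=n$, and your verification that skewness forces the Gauss map to be injective (hence $\dim\gamma(X)=n$) fills in the only nontrivial detail. No issues.
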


Now, we compute the Schubert class of $D_1$ in $\A(\Gr(n,N)\times\Gr(n,N))$. Recall that by the standard Schubert calculus the Chow ring $\A(\Gr(n,N)\times\Gr(n,N))$ has basis $\sigma_a\otimes\sigma_b$'s, where $\sigma_a$'s and $\sigma_b$'s are Schubert cycles in $\A(\Gr(n,N))$; $a=(a_1,\cdots,a_n)$ and $b=(b_1,\cdots,b_n)$ are partitions in decreasing order that consist of integers between $N-n$ and $0$. 

\begin{lem}\label{cDr}
    The Schubert class of $D_1(n,N)$ is given by
    $$[D_1]=\sum_{i=0}^{N-2n+1}\sigma_i\otimes\sigma_{N-2n+1-i}.$$
\end{lem}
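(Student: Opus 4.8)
\emph{Proof proposal.}

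The plan is to exhibit $D_1$ as the locus where a morphism of vector bundles on $G\times G:=\Gr(n,N)\times\Gr(n,N)$ fails to be injective, and then to read off its class from the Thom--Porteous formula. Write $\mathcal{S}$ and $\mathcal{Q}$ for the universal sub- and quotient bundles on $\Gr(n,N)$, of ranks $n$ and $N-n$, so that the special Schubert class $\sigma_i$ equals $c_i(\mathcal{Q})$. Let $\mathcal{S}_1=\pi_1^*\mathcal{S}$ and $\mathcal{Q}_2=\pi_2^*\mathcal{Q}$ be the pullbacks along the two projections, and consider the composite
$$\phi\colon\ \mathcal{S}_1\ \hookrightarrow\ \mathcal{O}^{\oplus N}\ \twoheadrightarrow\ \mathcal{Q}_2 .$$
Over a point $(U,W)$ this is the map $U\hookrightarrow V\twoheadrightarrow V/W$ (with $V=\C^N$), whose kernel is $U\cap W$; hence $\rank\phi(U,W)=n-\dim(U\cap W)$, so that $D_1$ is exactly the non-injectivity locus of $\phi$.

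Next I would record the numerics. The non-injectivity locus of a map from a rank-$n$ bundle to a rank-$(N-n)$ bundle has expected codimension $(N-n)-n+1=N-2n+1$, which by Proposition~\ref{bipro}(ii) equals $\codim D_1$. Moreover $D_1$ is irreducible of this dimension and generically reduced: the incidence correspondence recording $\P(U\cap W)$ realizes $D_1$ as the birational image of a $\big(\Gr(n-1,N-1)\times\Gr(n-1,N-1)\big)$-bundle over $\P^{N-1}$, which is smooth. Consequently the Thom--Porteous class is the reduced cycle $[D_1]$, and in this corank-one situation the Thom--Porteous formula is just the single Chern class
$$[D_1]=c_{N-2n+1}\big(\mathcal{Q}_2-\mathcal{S}_1\big).$$

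To finish I would expand this Chern class in $\A(G\times G)$. The exact sequence $0\to\mathcal{S}\to\mathcal{O}^{\oplus N}\to\mathcal{Q}\to0$ gives $c(\mathcal{S})^{-1}=c(\mathcal{Q})$, so
$$c\big(\mathcal{Q}_2-\mathcal{S}_1\big)=\pi_2^*c(\mathcal{Q})\cdot\pi_1^*c(\mathcal{S})^{-1}=\pi_2^*c(\mathcal{Q})\cdot\pi_1^*c(\mathcal{Q}).$$
Since $\pi_1^*c(\mathcal{Q})=\sum_i\sigma_i\otimes1$ and $\pi_2^*c(\mathcal{Q})=\sum_j1\otimes\sigma_j$, this product is $\sum_{i,j}\sigma_i\otimes\sigma_j$, and its codimension-$(N-2n+1)$ part is $\sum_{i=0}^{N-2n+1}\sigma_i\otimes\sigma_{N-2n+1-i}$ --- every summand being nonzero because $N-2n+1\le N-n$ when $n\ge1$ (and one tacitly assumes $N\ge2n-1$, as otherwise $D_1=G\times G$). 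This is the asserted formula.

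The step I expect to require the most care is the one glossed over above: knowing that the Thom--Porteous class of the degeneracy scheme, which has the expected codimension, actually coincides with the \emph{reduced} cycle $[D_1]$ --- i.e.\ the generic reducedness of the determinantal structure. I would settle this via the smooth incidence variety and its birational projection to $D_1$; alternatively one can bypass Thom--Porteous entirely by computing $[D_1]$ as a Grassmann-bundle pushforward of that incidence variety's class, or by pinning the coefficients down directly as intersection numbers against complementary products of Schubert classes. The remaining points are bookkeeping: that the convention $\sigma_i=c_i(\mathcal{Q})$ is the one used for $\A(\Gr(n,N))$ throughout, and the handling of the degenerate ranges of $(n,N)$.
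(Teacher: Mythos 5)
Your proposal is correct and takes essentially the same route as the paper: both realize $D_1$ as a degeneracy locus of a bundle map on $G\times G$ and apply the Thom--Porteous formula, the only difference being that you use the corank-one map $\S_1\hookrightarrow\V\twoheadrightarrow\Q_2$ whereas the paper uses $\S_1\oplus\S_2\to\V$ with $D_1$ the rank-$(2n-1)$ locus; both reduce to the same Chern class $\{c(\Q_1)c(\Q_2)\}_{N-2n+1}=\sum_{i}\sigma_i\otimes\sigma_{N-2n+1-i}$. Your explicit attention to generic reducedness via the smooth incidence correspondence over $\P^{N-1}$ is a point the paper leaves implicit (and which reappears there later as the variety $\hat D_1$), so no gap.
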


\begin{proof}
    Consider the projections $\pi_1,\pi_2:\Gr(n,N)\times\Gr(n,N)\to\Gr(n,N)$. Let $\S$ and $\Q$ be the universal sub and quotient bundles over $\Gr(n,N)$ respectively, and let $\iota:\S\to\V$ be the obvious inclusion from $\S$ to the rank $N$ trivial bundle $\V$. Denote $\Q_i:=\pi_i^*\Q$, $\S_i:=\pi_i^*\S$, $\V_i:=\pi_i^*\V$ and $\iota_i:=\pi_i^*\iota$ for $i=1,2$. Write $p_i:\V_1\oplus\V_2\to\V_i$ for the fibrewise projections. Identify $\V_1$ and $\V_2$ in the canonical way and denote the identified bundle by $\V$ again. Then we obtain $p_1,p_2:\V_1\oplus\V_2\to\V$, where $\V$ is now the rank $N$ trivial bundle over $\Gr(n,N)\times\Gr(n,N)$.

    Now, consider the diagram
    \[
    \xymatrixcolsep{1pc}
    \xymatrix{
    \S_1\oplus\S_2\ar[rr]^\phi\ar[rd]&
    &\V\ar[ld]\\
    &\Gr(n,N)\times\Gr(n,N)&
    }
    \]
    where $\phi:=(p_1\circ\iota_1)+(p_2\circ\iota_2)$.The locus $D_k(n,N)$ agrees with the $(2n-k)^{\text{th}}$ degeneracy locus of $\phi$. That is,
    $$D_k(n,N)=\{(L,K):\rank(\phi(L,K))\le2n-k\}.$$

    By \cite[Theorem 14.4]{fulton}, we then obtain $[D_1]=c_{N-2n+1}(\V-(\S_1\oplus\S_2))$. Recall the sequence $0\to\S_i\to\V\to\Q_i\to0$ and $c(\Q)=1+\sigma_1+\sigma_2+\cdots+\sigma_{N-n}$. Thus
    \begin{align*}
        [D_1]=&c_{N-2n+1}(\V-(\S_1\oplus\S_2))
        =\left\{\frac{c(\V)}{c(\S_1)c(\S_2)}\right\}_{N-2n+1}\\
        =&\left\{c(\V)c(\Q_1)c(\Q_2)\right\}_{N-2n+1}\\
        =&\left\{(1\otimes1)(c(\Q)\otimes1)(1\otimes c(\Q))\right\}_{N-2n+1}\\
        =&\sum_{i=0}^{N-2n+1}\sigma_i\otimes\sigma_{N-2n+1-i}.
    \end{align*}
\end{proof}

\section{The blowup of the Grassmannian product and the skew curve in $\P^3$}\label{three}

From the discussion in the previous section, we know that a variety $X\subset\P^N$ is skew if and only if $(\Gamma\cap\D_0)\setminus\Delta_G=\emptyset$, where $\Gamma$ and $\D_0$ are defined as in Corollary \ref{3n} and $\Delta_G$ is the diagonal of $\GGr(n,N)\times\GGr(n,N)$. To handle the intersection of $\Gamma$ and $ D_1$ along $\Delta_G$, we consider the proper transforms of $\D_0$ and $\Gamma$ in the blowup $B:=\Bl_{\Delta_G}\GGr(n,N)\times\GGr(n,N)$.

In this section, we study the structure of the Chow ring $\A(B)$ and compute the proper transforms of $D_1$ and $\Gamma$ for the case of one-parameter family of lines in $\P^N$. Then we apply these tools to study the skewness of one-parameter family of lines and the skew curve in $\P^3$.

\subsection{The structure of $\A(B)$}\hfill\\

Let $\pi:B\to G\times G$ be the blowdown map and $E:=\pi^{-1}(\Delta_G)$ be the exceptional divisor, then we have the following blowup diagram.
\begin{align}\label{blowup}
    \xymatrixcolsep{1.5pc}
    \xymatrix{
    E\ar@{^{(}->}[r]^-{j}\ar[d]_{\pi_E}
    &B\ar[d]^{\pi}\\
    \Delta_G\ar@{^{(}->}[r]^-{i}&G\times G
    }
\end{align}

Recall that the Chow ring $\A(B)$ is generated by $j_*(\A(E))$ and $\pi^*(\A(G\times G))$; also recall that the Chow ring of $E\cong\P(\N_{\Delta_G,G\times G})$ is generated by $\pi_E^*(\A(\Delta_G))$ and the class $\zeta$ of hyperplane sections. To apply the general theory of the multiplicative structure of blowups, see \cite[Section 13.6]{3264} or \cite{fulton}, to the case of $\A(B)$, we need to compute the pushforward and the pullback of $i$.

For convenience, we will abuse the notation and denote the generators of $\A(\Delta_G)\cong\A(G)$ by $\sigma_p$'s and the generators of $\A(G\times G)$ by $\sigma_a\otimes\sigma_b$. That is, the element denoted by $\sigma_p$ in $\A(\Delta_G)$ is the pullback of $1\otimes\sigma_p$ and $\sigma_p\otimes1$ via $i$. For any Schubert cycle $\sigma_a$, we write $|a|$ for the codimension of $\sigma_a$ and denote $\sigma_{\bar a}$ to be the Schubert cycle such that $\sigma_a\sigma_{\bar a}$ is the class of a point.

\begin{lem}\label{pupu}
    Let $i$ be the inclusion in the diagram (\ref{blowup}). Then
    \begin{enumerate}[(i)]
        \item $i^*(\sigma_a\otimes\sigma_b)=\sigma_a\sigma_b$;
        \item $i_*(\sigma_p)=\sum\sigma_a\otimes\sigma_b$, where the summation runs over all $\sigma_a\otimes\sigma_b\in\A_{|\bar p|}( G\times G)$ such that $\sigma_{\bar a}\sigma_{\bar b}=\sigma_{\bar p}$.
    \end{enumerate}
\end{lem}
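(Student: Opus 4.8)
The plan is to prove both identities by reducing to the standard structure of the diagonal $\Delta_G \subset G \times G$ for a Grassmannian $G = \GGr(n,N)$, using the Künneth decomposition of $\A(G \times G)$ and the fact that $i^*$ and $i_*$ are adjoint with respect to the intersection pairing. First I would prove $(i)$: since $i \colon \Delta_G \hookrightarrow G \times G$ is the graph of the identity on $G$, the composition $G \xrightarrow{\sim} \Delta_G \xrightarrow{i} G \times G \xrightarrow{\pi_k} G$ (either projection) is the identity, so $i^*(\sigma_a \otimes 1) = \sigma_a$ and $i^*(1 \otimes \sigma_b) = \sigma_b$ under the identification $\A(\Delta_G) \cong \A(G)$. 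Because $i^*$ is a ring homomorphism and $\sigma_a \otimes \sigma_b = (\sigma_a \otimes 1)(1 \otimes \sigma_b)$, this gives $i^*(\sigma_a \otimes \sigma_b) = \sigma_a \sigma_b$ immediately. This part is essentially formal.

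For $(ii)$ I would use the projection formula together with $(i)$ to pin down the coefficients of $i_*(\sigma_p)$ in the Schubert basis. Write $i_*(\sigma_p) = \sum_{a,b} c^p_{a,b}\, \sigma_a \otimes \sigma_b$, where the sum is over pairs with $|a| + |b| = |p| + \dim G$ (so that $\sigma_a \otimes \sigma_b$ lands in the correct codimension, i.e. in $\A_{|\bar p|}(G \times G)$). To extract $c^p_{a,b}$, pair against the dual class $\sigma_{\bar a} \otimes \sigma_{\bar b}$ and integrate over $G \times G$: by the projection formula,
\[
c^p_{a,b} = \int_{G \times G} i_*(\sigma_p) \cdot (\sigma_{\bar a} \otimes \sigma_{\bar b}) = \int_{\Delta_G} i^*(\sigma_{\bar a} \otimes \sigma_{\bar b}) \cdot \sigma_p = \int_{G} \sigma_{\bar a}\,\sigma_{\bar b}\,\sigma_p,
\]
using $(i)$ in the last step. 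Now $\int_G \sigma_{\bar a}\sigma_{\bar b}\sigma_p$ is nonzero if and only if $\sigma_{\bar a}\sigma_{\bar b}$ is the class of a point complementary to $\sigma_p$ — by dimension count the product $\sigma_{\bar a}\sigma_{\bar b}$ has the codimension of a point precisely when $|\bar a| + |\bar b| = \dim G$, which matches the constraint above, and then $\int_G \sigma_{\bar a}\sigma_{\bar b}\sigma_p = 1$ exactly when $\sigma_{\bar a}\sigma_{\bar b} = \sigma_{\bar p}$ (equivalently $\sigma_{\bar a}\sigma_{\bar b}$ is the point class Poincaré-dual to $\sigma_p$) and $0$ otherwise. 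This yields $c^p_{a,b} = 1$ precisely for the pairs described in the statement and $0$ for all others, which is exactly $(ii)$.

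The only mild subtlety — and the step I expect to need the most care — is bookkeeping the two conventions that coexist in the paper: $\sigma_a\sigma_{\bar a}$ is declared to be \emph{the class of a point}, whereas a product like $\sigma_{\bar a}\sigma_{\bar b}$ of codimension $\dim G$ is in general an integer multiple of the point class, and one must check this multiplicity is always $0$ or $1$ for Schubert classes (it is, since the only way $\sigma_{\bar a}\sigma_{\bar b}$ is nonzero in top codimension is $\sigma_{\bar b} = \sigma_a$, giving the point class with coefficient $1$). Once that is said cleanly, the Pieri/duality input needed is minimal, and no blowup geometry enters the proof at all — diagram (\ref{blowup}) is only used to fix notation for $i$. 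I would therefore present $(i)$ in one line, then do the projection-formula computation for $(ii)$, and close by noting that the resulting index set $\{(a,b) : \sigma_{\bar a}\sigma_{\bar b} = \sigma_{\bar p},\ |a|+|b| = |p| + \dim G\}$ is exactly $\A_{|\bar p|}(G\times G) \cap \{\sigma_{\bar a}\sigma_{\bar b} = \sigma_{\bar p}\}$ as stated.
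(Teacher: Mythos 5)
Your proof is correct and follows essentially the same route as the paper: part (i) is the identical formal observation, and for part (ii) the paper likewise pairs $i_*(\sigma_p)$ against the dual basis elements $\sigma_{\bar a}\otimes\sigma_{\bar b}$ and applies the push-pull (projection) formula together with (i) to reduce the coefficient to the triple product $\sigma_{\bar a}\sigma_{\bar b}\sigma_p$ in $\A(G)$. The only cosmetic difference is that the paper evaluates $(\sigma_{\bar a}\otimes\sigma_{\bar b})\,i_*(\sigma_p)$ as a class in $\A(G\times G)$ rather than as an integral, which is the same calculation.
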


\begin{proof}
    The first statement follows from $i^*(\sigma_a\otimes1)=\sigma_a$, $i^*(1\otimes\sigma_b)=\sigma_b$, and $i^*$ is a ring homomorphism.

    For the pushforward, fix any $\sigma_p\in\A(\Delta_G)$. Since $i$ is an injection, we have $i_*(\sigma_p)\in\A_{|p|}( G\times G)$. Thus $i_*(\sigma_p)=\sum_{|\bar a|+|\bar b|=|\bar p|}n_{ab}\sigma_a\otimes\sigma_b$ for some integers $n_{ab}$. By the push-pull formula, for any $\sigma_a\otimes\sigma_b\in\A(G\times G)$, we have
    $$(\sigma_{\bar a}\otimes\sigma_{\bar b})i_*(\sigma_p)=i_*(i^*(\sigma_{\bar a}\otimes\sigma_{\bar b})\sigma_p)=i_*(\sigma_{\bar a}\sigma_{\bar b}\sigma_p)=\left\{\begin{array}{ll}
        \text{the class of a point} & \text{if }\sigma_{\bar a}\sigma_{\bar b}=\sigma_{\bar p} \\
        0 & \text{otherwise.}
    \end{array}\right.$$
    Thus $n_{ab}=1$ if $\sigma_{\bar a}\sigma_{\bar b}=\sigma_{\bar p}$ and $n_{ab}=0$ for any other pairs $(a,b)$.
\end{proof}

With the observation above and the general theory of blowups, we obtain the following lemmas.

\begin{lem}\label{blring}
    Denote the pullbacks of Schubert cycles under $\pi_E$ in $\A(E)$ by $\bar{\sigma_p}$, and the class of hyperplane section in $\A(E)$ by $\zeta$. Then the multiplicative structure of $\A(B)$ is given by
    \begin{enumerate}[(i)]
        \item $j_*(\bar\sigma_p\zeta^r)j_*(\bar\sigma_q\zeta^s)=-j_*(\bar\sigma_p\bar\sigma_q\zeta^{r+s+1})$,
        \item $j_*(\bar\sigma_p\zeta^r)\pi^*(\sigma_a\otimes\sigma_b)=j_*(\bar\sigma_p\bar\sigma_a\bar\sigma_b\zeta^r)$;
        \item $\pi^*(\sigma_a\otimes\sigma_b)\pi^*(\sigma_c\otimes\sigma_d)=\pi^*((\sigma_a\sigma_c)\otimes(\sigma_b\sigma_d))$.
    \end{enumerate}
    The ring structure of $\A(B)$ is then given by the multiplicative rules above, together with the following relation
    \begin{enumerate}[(i)]
    \setcounter{enumi}{3}
        \item $\pi^*(i_*(\sigma_p))=j_*\left(\bar\sigma_p\sum_{i=0}^{\dim G-1}\pi_E^*c_i(\N_{\Delta_G, G\times G})\zeta^{\dim G-1-i}\right)$ for any $\sigma_p\in\A(\Delta_G)$.
    \end{enumerate}
\end{lem}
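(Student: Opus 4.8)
The plan is to derive all four parts of Lemma \ref{blring} from the standard formalism for Chow rings of blowups along smooth centers, as developed in \cite[Section 13.6]{3264} and \cite{fulton}, combined with the explicit pushforward/pullback formulas for the diagonal inclusion $i$ already computed in Lemma \ref{pupu}. The key structural inputs I would invoke are: (a) the Chow ring $\A(B)$ is generated as a ring by $\pi^*\A(G\times G)$ and the classes $j_*(\pi_E^*\alpha\cdot\zeta^r)$ coming from the exceptional divisor $E\cong\P(\N_{\Delta_G,G\times G})$; (b) the self-intersection/excess formula $j^*j_* = $ multiplication by $-\zeta$ on $\A(E)$ (equivalently $j^*j_*\eta = -\zeta\cdot\eta$, where $-\zeta = c_1(\O_E(E))$); (c) the projection formula for $j$ and $\pi$; and (d) the key blowup relation $\pi^*i_*(\beta) = j_*\big(\pi_E^*\beta\cdot c_{d-1}(\xi)\big)$, where $\xi$ is the universal quotient on $E$ fitting in $0\to\O_E(1)\to\pi_E^*\N_{\Delta_G,G\times G}\to\xi\to 0$ and $d=\dim G$; after expanding $c(\xi) = \pi_E^*c(\N)\cdot(1+\zeta+\zeta^2+\cdots)$ this last formula becomes precisely part (iv).

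For part (iii), I would simply note that $\pi^*$ is a ring homomorphism and apply it to the product $(\sigma_a\otimes\sigma_b)(\sigma_c\otimes\sigma_d) = (\sigma_a\sigma_c)\otimes(\sigma_b\sigma_d)$ in $\A(G\times G)$, which is the Künneth/Schubert-calculus product rule. For part (ii), I would compute $j_*(\bar\sigma_p\zeta^r)\cdot\pi^*(\sigma_a\otimes\sigma_b)$ using the projection formula for $j$: this equals $j_*\big(\bar\sigma_p\zeta^r\cdot j^*\pi^*(\sigma_a\otimes\sigma_b)\big)$, and $j^*\pi^* = \pi_E^*i^*$ by commutativity of the blowup square (\ref{blowup}), so by Lemma \ref{pupu}(i) we get $j^*\pi^*(\sigma_a\otimes\sigma_b) = \pi_E^*(\sigma_a\sigma_b) = \bar\sigma_a\bar\sigma_b$, yielding $j_*(\bar\sigma_p\bar\sigma_a\bar\sigma_b\zeta^r)$. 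For part (i), I would use the projection formula twice: $j_*(\bar\sigma_p\zeta^r)\cdot j_*(\bar\sigma_q\zeta^s) = j_*\big(\bar\sigma_p\zeta^r\cdot j^*j_*(\bar\sigma_q\zeta^s)\big)$, then substitute $j^*j_*(\eta) = -\zeta\cdot\eta$ (the self-intersection formula, since $j^*E = c_1(\O_E(-1)) = -\zeta$ in this normalization) to obtain $j_*(\bar\sigma_p\zeta^r\cdot(-\zeta)\cdot\bar\sigma_q\zeta^s) = -j_*(\bar\sigma_p\bar\sigma_q\zeta^{r+s+1})$.

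The one point requiring care — and the main "obstacle," though it is bookkeeping rather than a genuine difficulty — is fixing a consistent sign/normalization convention for $\zeta$ relative to $\O_E(1)$ versus $\O_E(-1)$ and the normal bundle of $E$ in $B$. Different sources (Fulton vs. 3264) trivialize the projective bundle $\P(\N)$ with opposite conventions, which flips signs in both the self-intersection formula $j^*j_*\eta = \pm\zeta\eta$ and in the key relation (iv); one must choose the convention under which $-\zeta = c_1(\O_E(E))$ so that (i) comes out with the displayed minus sign and (iv) comes out with the displayed plus sign, and then verify internal consistency (e.g. that the two conventions used in (i) and (iv) are the same one). Once the convention is pinned down, parts (i)–(iii) are immediate from the projection formula and Lemma \ref{pupu}, and part (iv) is the standard blowup relation written in terms of $\zeta$ and the Chern classes of $\N_{\Delta_G,G\times G}$. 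I would also remark that, together with the Chern classes of $\N_{\Delta_G,G\times G}$ (computable since $\Delta_G\cong G$ and $\N_{\Delta_G,G\times G}\cong\T_G$), relations (i)–(iv) give a complete presentation of $\A(B)$, which is what is needed for the intersection-theoretic computations in the rest of the paper.
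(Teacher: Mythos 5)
Your proposal is correct and follows essentially the same route as the paper, which states this lemma without a written proof as a direct consequence of the general blowup formalism of \cite[Section 13.6]{3264} and \cite{fulton} together with Lemma \ref{pupu}; your derivations of (i)--(iv) via the projection formula, $j^*j_*=(-\zeta)\cdot(-)$, $j^*\pi^*=\pi_E^*i^*$, and the key relation $\pi^*i_*(\beta)=j_*\left(\pi_E^*\beta\cdot c_{\dim G-1}(\xi)\right)$ are exactly what that citation unpacks to. One typographical caveat: the excess sequence should read $0\to\O_E(-1)\to\pi_E^*\N_{\Delta_G,G\times G}\to\xi\to0$ (tautological subbundle, not $\O_E(1)$), which is the convention your expansion $c(\xi)=\pi_E^*c(\N_{\Delta_G,G\times G})(1+\zeta+\zeta^2+\cdots)$ already assumes and which you correctly identify as the only point requiring care.
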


\begin{lem}\label{projbun}
    The ring structure of $\A(E)\cong\A(\P(\N_{\Delta_G, G\times G}))$ is given by
    $$\A(E)=\frac{\pi_E^*\A(\Delta_G)[\zeta]}{\sum_{i=0}^{(N-n)n}\pi_E^*c_i(\N_{\Delta_G, G\times G})\zeta^{(N-n)n-i}}$$
    where $\pi_E^*\A(G)[\zeta]$ is the polynomial ring with coefficients in $\pi_E^*\A(\Delta_G)$ and the hyperplane section class $\zeta$ as the variable, and $c_i(\N_{\Delta_G, G\times G})$'s are the Chern classes of the bundle $\N_{\Delta_G, G\times G}$ over $\Delta_G$.
\end{lem}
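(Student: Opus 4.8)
The plan is to read Lemma \ref{projbun} as a direct application of the projective bundle theorem for Chow rings. The only geometric ingredient required is the classical identification of the exceptional divisor of a blowup along a smooth subvariety with the projectivization of the normal bundle of that subvariety. Here $G\times G$ is smooth (being a product of smooth varieties) and its diagonal $\Delta_G\cong G$ is smooth, so the blowdown $\pi\colon B\to G\times G$ of the diagram (\ref{blowup}) has smooth center, and $\pi_E\colon E\to\Delta_G$ realizes
\[
E\;\cong\;\P\bigl(\N_{\Delta_G,\,G\times G}\bigr)
\]
as a projective bundle over $\Delta_G$ with fibre $\P^{\dim G-1}$, since $\rank\N_{\Delta_G,G\times G}=\codim\Delta_G=\dim G=(N-n)n$; see \cite[Section 13.6]{3264} or \cite{fulton}.

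Given this, I would invoke the projective bundle theorem: for a rank-$m$ bundle $\N$ over a base $Y$, $\A(\P(\N))$ is a free $\A(Y)$-module with basis $1,\zeta,\dots,\zeta^{m-1}$, and its ring structure is $\pi_E^*\A(Y)[\zeta]$ modulo the single Grothendieck relation $\sum_{i=0}^{m}\pi_E^*c_i(\N)\,\zeta^{m-i}=0$. Applying this with $Y=\Delta_G$, $\N=\N_{\Delta_G,G\times G}$, and $m=(N-n)n$, and using $c_0(\N)=1$, yields verbatim the presentation of $\A(E)$ asserted in the lemma.

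The one point that must be pinned down is the normalization of the class $\zeta$, so that the Grothendieck relation appears with all plus signs and, more importantly, so that $\zeta$ is the very same ``hyperplane section class'' used in Lemma \ref{blring} --- in particular in the self-intersection rule (i) there, which encodes $j^*j_*=-\zeta\cdot(-)$. I would take $\zeta:=c_1\bigl(\O_{\P(\N_{\Delta_G,G\times G})}(1)\bigr)$ for the tautological quotient line bundle (Grothendieck's convention); under the blowup identification one has $\N_{E,B}\cong\O_E(-1)$, so $\zeta=-[E]|_E$, which is exactly the sign implicit in Lemma \ref{blring}(i). Since this is a textbook fact there is no substantive obstacle; the only real care needed is to reconcile the ``subbundle versus quotient'' and the sign conventions of the cited reference with those already fixed elsewhere in the paper (the normal bundle convention, Lemma \ref{pupu}, and Lemma \ref{blring}), so that all subsequent computations in $\A(B)$ stay internally consistent.
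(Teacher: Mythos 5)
Your proposal is correct and matches the paper's treatment: the paper states Lemma \ref{projbun} without a separate proof, deriving it directly from the identification $E\cong\P(\N_{\Delta_G,G\times G})$ (smooth blowup center) together with the standard projective bundle theorem, exactly as you do. Your additional remark about normalizing $\zeta$ so that the Grothendieck relation and the self-intersection rule of Lemma \ref{blring} are consistent is a reasonable point of care but does not change the argument.
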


To understand $\P(\N_{\Delta_G, G\times G})$, we also need the following fact.

\begin{lem}\label{nortan}
    Let $Y$ be any smooth variety and $\iota:Y\hookto Y\times Y$ be the inclusion map from $Y$ to the diagonal $\Delta_Y$ of $Y\times Y$. Then there is a canonical isomorphism $\T_Y\to\iota^*\N_{\Delta_Y,Y\times Y}$, given by
    $$(y,v)\longmapsto\iota^{-1}((y,y),[0,v]),$$
    where $[0,v]$ is the normal vector in $\N_{\Delta_Y,Y\times T}:=\T_{Y\times Y}|_{\Delta_Y}/\T_{\Delta_Y}$ represented by the vector $(0,v)$.

    From now on, if no confusion could arise, we will identify the projective bundle $E:=\P(\N_{\Delta_G}, G\times G)\to\Delta_G$ and $\P(\T_G)\to G$ via the identification above.
\end{lem}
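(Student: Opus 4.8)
The plan is to realize the claimed isomorphism as a composite of three canonical identifications induced by the product structure on $Y\times Y$, reducing the only substantive point to a pointwise linear-algebra fact.

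First I would invoke the canonical decomposition $\T_{Y\times Y}\cong p_1^*\T_Y\oplus p_2^*\T_Y$, where $p_1,p_2\colon Y\times Y\to Y$ are the two projections. Since $p_1\circ\iota=p_2\circ\iota=\id_Y$, pulling this decomposition back along $\iota$ gives a canonical identification $\iota^*\T_{Y\times Y}\cong\T_Y\oplus\T_Y$. Under this identification the sub-bundle $\iota^*\T_{\Delta_Y}$ is carried to the diagonal sub-bundle $\{(v,v)\}$: indeed $\iota$ maps $Y$ isomorphically onto $\Delta_Y$, so $\iota^*\T_{\Delta_Y}\cong\T_Y$, and composing its inclusion with $dp_1$ and with $dp_2$ recovers $d(\id_Y)$ in both cases, which forces the inclusion to be $v\mapsto(v,v)$.

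Next, since pullback of vector bundles preserves short exact sequences, applying $\iota^*$ to $0\to\T_{\Delta_Y}\to\T_{Y\times Y}|_{\Delta_Y}\to\N_{\Delta_Y,Y\times Y}\to0$ yields a canonical isomorphism
$$\iota^*\N_{\Delta_Y,Y\times Y}\;\cong\;(\T_Y\oplus\T_Y)\big/\{(v,v)\}.$$
I would then check that the bundle morphism $\T_Y\to(\T_Y\oplus\T_Y)/\{(v,v)\}$ sending $v$ to the class of $(0,v)$ is an isomorphism: on each fibre it is injective because $(0,v)$ being diagonal forces $v=0$, and surjective because $(a,b)\equiv(0,b-a)$ modulo the diagonal; a fibrewise isomorphism of bundles of equal rank is a bundle isomorphism. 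Composing the three identifications produces exactly the map $(y,v)\mapsto\iota^{-1}((y,y),[0,v])$ of the statement, and since each step was canonical, so is the final isomorphism.

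The argument is essentially formal, so I do not anticipate a real obstacle; the only points deserving care are that the naive pointwise assignment $v\mapsto[(0,v)]$ genuinely defines a morphism of bundles — which is transparent once everything is phrased through the canonical product splitting rather than through ad hoc local trivializations — and that the ordering of the two factors is not innocuous: replacing $(0,v)$ by $(v,0)$ composes the isomorphism with $-\id$, the discrepancy being absorbed by the swap automorphism of $Y\times Y$, so the convention must be fixed once and for all as in the statement.
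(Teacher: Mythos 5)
Your proof is correct and is the standard argument; the paper states this lemma without proof precisely because it is this routine identification of $\N_{\Delta_Y,Y\times Y}$ with $\T_Y$ via the product splitting $\T_{Y\times Y}\cong p_1^*\T_Y\oplus p_2^*\T_Y$. Your closing remark about the sign discrepancy between $[(0,v)]$ and $[(v,0)]$ is a worthwhile observation, since the paper does fix the convention $[0,v]$ in the statement.
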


\subsection{The class of $\tilde D_1$}\hfill\\
Now we focus on the class $[\tilde{D_1}]=[\tilde{D_1}(n,N)]\in\A(B)$.

\begin{lem}\label{tilD1}
    The class of $[\tilde{D_1}(2,4)]$ and $[\tilde{D_1}(2,5)]$ are given by
    \begin{enumerate}[(i)]
        \item $[\tilde{D_1}(2,4)]=\pi^*(\sigma_1\otimes\sigma_0+\sigma_0\otimes\sigma_1)-j_*(2)$;
        \item $[\tilde{D_1}(2,5)]=\pi^*(\sigma_2\otimes\sigma_0+\sigma_1\otimes\sigma_1+\sigma_0\otimes\sigma_2)-j_*(5\bar\sigma_1+3\zeta)$
    \end{enumerate}
\end{lem}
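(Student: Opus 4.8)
The plan is to compute $[\tilde D_1(n,N)]$ as the proper transform of $D_1(n,N)$ under the blowup $\pi\colon B\to G\times G$, using the general formula $[\tilde D_1]=\pi^*[D_1]-j_*(\text{correction})$, where the correction term records the way $E$ meets the total transform $\pi^{-1}(D_1)$. Concretely, since $D_1$ contains the diagonal $\Delta_G$, we have $\pi^*[D_1]=[\tilde D_1]+m_E\,[E]+\cdots$ where the extra terms are supported on $E$ and can be read off from the multiplicity of $D_1$ along $\Delta_G$ together with the tangent-cone data of $D_1$ at a generic point of the diagonal. The key input is Lemma \ref{cDr}, which gives $[D_1(2,4)]=\sigma_1\otimes\sigma_0+\sigma_0\otimes\sigma_1$ and $[D_1(2,5)]=\sigma_2\otimes\sigma_0+\sigma_1\otimes\sigma_1+\sigma_0\otimes\sigma_2$; these pull back to the stated $\pi^*(\cdots)$ summands, so the content of the lemma is entirely in identifying the $j_*$-correction.

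First I would pin down the scheme structure of $\pi^{-1}(D_1)$ near $E$. Using Lemma \ref{nortan} to identify $E\cong\P(\T_G)$, the restriction of the total transform to $E$ is cut out by the projectivized tangent cone of $D_1$ along $\Delta_G$, which one computes in local coordinates: $D_1$ is the locus where the $2n\times 2n$ (for $n=2$) or rank-drop condition of the map $\phi$ from Lemma \ref{cDr} degenerates, and near the diagonal $(U,U)$ this becomes an explicit determinantal/quadratic condition on the normal directions. The multiplicity $m$ of $D_1$ along $\Delta_G$ and the class in $\A(E)$ of the residual intersection of $\pi^{-1}(D_1)$ with $E$ (beyond $m[E]$) then give, via the multiplicative rules of Lemma \ref{blring}, the element $\bar\sigma_p\zeta^r$-combination appearing inside $j_*$. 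For $(n,N)=(2,4)$ I expect $D_1$ to be smooth of codimension one along $\Delta_G$ except that the diagonal sits in it with multiplicity two in the relevant transverse sense, yielding the clean $j_*(2)$; for $(2,5)$, where $D_1$ has codimension two and the diagonal is more deeply embedded, the correction acquires both a $\bar\sigma_1$-term and a $\zeta$-term, with coefficients $5$ and $3$ to be determined by intersecting against test curves in $E$ or by the Porteous-type expansion of $c_2$ of the relevant bundle restricted to the exceptional locus.

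To fix the numerical coefficients I would use two independent checks. One is to intersect $[\tilde D_1]$ (the candidate formula) against a basis of complementary-dimension cycles in $\A(B)$ — in particular against classes of the form $j_*(\bar\sigma_q\zeta^s)$ and $\pi^*(\sigma_c\otimes\sigma_d)$ — and compare with geometrically computed intersection numbers on $D_1$ itself away from and along the diagonal, using Lemma \ref{pupu} and Lemma \ref{blring}(i)–(iv). The second is a consistency check with Lemma \ref{tilD1}(i): the $(2,5)$ case should restrict compatibly to the $(2,4)$ case under a suitable hyperplane-section/linear-embedding $\P^3\hookrightarrow\P^4$, which constrains the coefficients. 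The main obstacle will be the honest local computation of the tangent cone of $D_1$ along $\Delta_G$ and its class in $\A(E)=\A(\P(\T_G))$ — i.e. correctly accounting for the residual scheme structure of $\pi^{-1}(D_1)\cap E$ rather than just its support — since this is where the nontrivial coefficients $5$ and $3$ (as opposed to the naive expected value from codimension counting alone) actually come from; everything downstream is a mechanical application of the blowup ring structure already established.
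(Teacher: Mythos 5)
Your proposal follows essentially the same route as the paper: write $[\tilde D_1]=\pi^*[D_1]+j_*(\mu)$ with undetermined coefficients and pin down $\mu$ by computing the class of $\tilde D_1\cap E$ in $\A(E)$ geometrically and matching it against $[E][\tilde D_1]$ computed from the blowup ring structure. The one step you flag as the main obstacle --- the tangent-cone/residual-scheme computation --- is exactly what the paper short-circuits: under the identification $E\cong\P(\T_G)\cong\P(\Hom(\S,\Q))$ the locus $\tilde D_1\cap E$ is the degeneracy locus $\{\rank\phi\le n-1\}$ of the tautological map $\phi:\O_E(-1)\otimes\pi_E^*\S\to\pi_E^*\Q$, so Porteous gives its class directly (and comparing leading coefficients forces the multiplicity $m$ in $[E][\tilde D_1]=m[E\cap\tilde D_1]$ to be $1$), producing $j_*(2\bar\sigma_1+2\zeta)$ and $j_*(3\bar\sigma_2+\bar\sigma_{11}+5\bar\sigma_1\zeta+3\zeta^2)$ and hence the stated correction terms.
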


\begin{proof}
    To begin, we know that the class $[\tilde{D_1}]$ is of the form $\pi^*([D_1])+j_*(\mu)$ for some $\mu\in\A_{\dim D_1}(E)=\A^{\codim_{ G\times G} D_1-1}(E)$. By Proposition \ref{cDr}, we know that $[D_1]=\sum_{i=0}^{N-2n+1}\sigma_i\otimes\sigma_{N-2n+1-i}$. By Proposition \ref{bipro}, we have $\codim D_1=N-2n+1$. Thus, we see that
    $$[\tilde{D_1}]=\pi^*\left(\sum_{i=0}^{N-2n+1}\sigma_i\otimes\sigma_{N-2n+1-i}\right)+\sum_{|a|\le N-2n}n_aj_*\left(\bar\sigma_a\zeta^{N-2n-|a|}\right)$$
    for some integers $n_a$'s.

    By Lemma \ref{blring}, we have
    \begin{align*}
        [\tilde D_1(n,N)][E]
        =&\left(\pi^*\left(\sum_{i=0}^{N-2n+1}\sigma_i\otimes\sigma_{N-2n+1-i}\right)+\sum_{|a|\le N-2n}n_aj_*\left(\bar\sigma_a\zeta^{N-2n-|a|}\right)\right)j_*(1)\\
        =&j_*\left(\sum_{i=0}^{N-2n+1}\bar\sigma_i\bar\sigma_{N-2n+1-i}-\sum_{|a|\le N-2n}n_a\bar\sigma_a\zeta^{N-2n+1-|a|}\right)\\
        =&j_*\left(\sum_{i=0}^{\lfloor\frac{1}{2}(N-2n+1)\rfloor}(N-2n+2-2i)\bar\sigma_{N-2n+1-i,i}-\sum_{|a|\le N-2n}n_a\bar\sigma_a\zeta^{N-2n+1-|a|}\right).
    \end{align*}
    Thus,
    \begin{align}\label{alg24}
        [\tilde D_1(2,4)][E]=j_*(2\bar\sigma_1-n_0\zeta)
    \end{align}
    and
    \begin{align}\label{alg25}
        [\tilde D_1(2,5)][E]=j_*(3\bar\sigma_2+\sigma_{11}-n_0\zeta^2-n_1\bar\sigma_1\zeta).
    \end{align}

    On the other hand, the class of $\tilde D_1\cap E$ can be computed as a degeneracy locus as follows. By Lemma \ref{nortan} we see that $E\cong\P(\T_G)\cong\P(\Hom(\S,\Q))$, where $\S$ and $\Q$ are the universal sub and quotient bundles of $\Gr(n,N)$ respectively. Let $\O_E(-1)\subset \pi_E^*\Hom(\S,\Q)\cong\Hom(\pi_E^*\S,\pi_E^*\Q)$ be the universal line bundle of the projective bundle $\P(\Hom(\S,\Q))$. Then we have the diagram

    \begin{align*}
        \xymatrixcolsep{2pc}
        \xymatrix{
        \O_E(-1)\otimes\pi_E^*\S\ar[rr]^-{\phi}\ar[rd]
        &&\pi_E^*\Q\ar[ld]\\
        &E&
        }
    \end{align*}
    where $\phi(v\otimes s):=v(s)$. Note that $\rank(\O_E(-1)\otimes\pi_E^*\S)=n$, $\rank\pi_E^*\Q=N-n$, and $\phi$ is a linear map between these two bundles.

    Observe that, under the identification $E\cong\P(\Hom(\S,\Q))$, we have
    \begin{align}\label{nortand1}
        \tilde D_1\cap E=\left\{(x,[v]):\rank v<n\right\}=\left\{(x,[v]):\rank \phi_{(x,[v])}\le n-1\right\},
    \end{align}
    and
    $$\codim_E \tilde D_1\cap E=N-2n+1=(\rank(\O_E(-1)\otimes\pi_E^*\S)-(n-1))(\rank \pi_E^*\S-(n-1)).$$
    Thus, by the Porteous formula (see \cite[Theorem 12.4]{3264} or \cite[Theorem 14.4]{fulton}), we have
    $$\left[\tilde D_1\cap E\right]=\left\{\frac{c(\pi_E^*\Q)}{c(\O_E(-1)\otimes\pi_E^*\S)}\right\}_{N-2n+1}=\left\{c(\pi_E^*\Q)s(\O_E(-1)\otimes\pi_E^*\S)\right\}_{N-2n+1}$$
    as a class in $\A(E)$, where $s(\O_E(-1)\otimes\pi_E^*\S)$ stands for the Segre class of $\O_E(-1)\otimes\pi_E^*\S$.
    
    Recall that $c(\pi_E^*Q)=1+\bar\sigma_1+\bar\sigma_2+\cdots+\bar\sigma_{N-n},c(\pi_E^*S)=1-\bar\sigma_1+\bar\sigma_{1^2}+\cdots+(-1)^n\bar\sigma_{1^n}$, and $c(\O_E(-1))=1-\zeta\in\A(E)$, where $\sigma_{1^k}:=\sigma_{1,1,\cdots,1}$ with $1$ repeating $k$ times. Thus
    $$c(\O_E(-1)\otimes\pi_E^*S)=\sum_{k=0}^n\sum_{i=0}^k(-1)^k\binom{n-k+i}{i}\bar\sigma_{1^{k-i}}\zeta^i.$$

    When $n=2$, we then have $s_1(\O_E(-1)\otimes\pi_E^*\S)=\bar\sigma_1+2\zeta$ and $s_2(\O_E(-1)\otimes\pi_E^*\S)=\bar\sigma_2+3\bar\sigma_1\zeta+3\zeta^2$. Therefore, as a class in $\A(B)$, 
    \begin{align}\label{geo24}
        \left[\tilde D_1(2,4)\cap E\right]=&j_*\left(\left\{c(\pi_E^*\Q)s(\O_E(-1)\otimes\pi_E^*\S)\right\}_1\right)\\\nonumber
        =&j_*\left(c_1(\pi_E^*\Q)+s_1(\O_E(-1)\otimes\pi_E^*\S)\right)\\\nonumber
        =&j_*(2\bar\sigma_1+2\zeta),
    \end{align}
    and
    \begin{align}\label{geo25}
        \left[\tilde D_1(2,5)\cap E\right]=&j_*\left(\left\{c(\pi_E^*\Q)s(\O_E(-1)\otimes\pi_E^*\S)\right\}_2\right)\\ \nonumber
        =&j_*\left(c_1(\pi_E^*\Q)s_1(\O_E(-1)\otimes\pi_E^*\S)+c_2(\pi_E^*\Q)+s_2(\O_E(-1)\otimes\pi_E^*\S)\right)\\ \nonumber
        =&j_*(3\bar\sigma_2+\bar\sigma_{11}+5\bar\sigma_1\zeta+3\zeta^2).
    \end{align}
    
    Since the intersection of the two varieties is of the expected dimension, and the multiplicity should be a constant integer $m$ across the intersection, we have $[E][\tilde D_1]=m[E\cap\tilde D_1]$. By comparing the coefficients in (\ref{alg24}), (\ref{alg25}), (\ref{geo24}), and (\ref{geo25}), we conclude that $m=1$ and the desired formulae follow.
\end{proof}

\subsection{The class of $\tilde\Gamma$}\hfill\\
    We now turn our attention to the class of $\Gamma=:=C\times C$ for some curve $C\subset G:=\Gr(n+1,N+1)=\GGr(n,N)$ and its proper transform $\tilde\Gamma=\tilde\Gamma:=\pi^{-1}_*\Gamma$ in $\A(B)$. In particular, we focus on the case of $C=\gamma(X)$, the Gauss image of a smooth curve.
    
    We start with the general case $\Gamma:=C\times C$, where $C\subset G$ is a smooth one-parameter family of lines in $\P^N$. We know that $[\tilde\Gamma]=\pi^*([\Gamma])+j_*(\nu)$ for some $\nu\in\A_{\dim\Gamma}(E)$. Observe that $[\Gamma]=[C]\otimes[C]$, where $C=d^\vee\sigma_{N-1,N-2}$ for some integer $d^\vee$, and $\nu\in\A_{\dim\Gamma}(E)=\A_2(E)=\A^{4N-7}(E)$. Furthermore, by Lemma \ref{projbun} we see that $\A(E)$ consists of the polynomials in $\pi_E^*(\A(\Delta_G))[\zeta]$ with degrees $\le2N-2$. Thus we have
    $$[\tilde\Gamma]=(d^\vee)^2\pi^*\left(\sigma_{N-1,N-2}\otimes\sigma_{N-1,N-2}\right)+\sum_{2N-5\le|a|\le4N-7}m_aj_*\left(\bar\sigma_a\zeta^{4N-7-|a|}\right)$$
    for some integer $m_a$'s.
    
    Similar to the way we handled $[\tilde D_1]$, to solve $m_a$'s, we consider the class $[E\cap\tilde\Gamma]$ and $[E][\tilde\Gamma]$. They must be the same class up to a multiplicity. On the one hand, via Lemma \ref{blring} and the assumption that $N\ge3$, we see that
    
    \begin{align}\label{gamalg}
        [E][\tilde\Gamma]=&\left((d^\vee)^2\pi^*\left(\sigma_{N-1,N-2}\otimes\sigma_{N-1,N-2}\right)+\sum_{2N-5\le|a|\le4N-7}m_aj_*\left(\bar\sigma_a\zeta^{4N-7-|a|}\right)\right)j_*(1).\\
        =&\sum_{2N-5\le|a|\le4N-7}-m_aj_*\left(\bar\sigma_a\zeta^{4N-6-|a|}\right)\nonumber
    \end{align}
    
    On the other hand, we can compute the class $\tilde\Gamma\cap E$ with geometric interpretation based on the following lemma.
    
    \begin{lem}
        Given any $n$-dimensional smooth family $C\subset G$ of lines in $\P^N$. Let $E$ and $\Gamma$ be defined as above, and $\tilde\Gamma$ be the proper transform of $\tilde\Gamma$. Then, under the identification $E\cong\P(\T_G)$ from Lemma \ref{nortan}, we have $\tilde\Gamma\cap E\cong\P(\T_C)$.
    \end{lem}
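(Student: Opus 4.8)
The plan is to identify $\tilde\Gamma\cap E$ with the projectivized normal bundle of the center-intersection, and then to recognize that normal bundle, via a second application of Lemma \ref{nortan}, as $\T_C$. Set $\Delta_C:=\Gamma\cap\Delta_G$. Since $\Gamma=C\times C$ and $\Delta_G=\{(Q,Q):Q\in G\}$, set-theoretically $\Delta_C$ is the diagonal of $C\times C$, so $\Delta_C\cong C$ and $\dim\Delta_C=n$. First I would verify that this intersection is \emph{clean}: at a point $(P,P)$ with $P\in C$ one has $\T_{(P,P),\Gamma}=\T_{P,C}\oplus\T_{P,C}$ and $\T_{(P,P),\Delta_G}=\{(v,v):v\in\T_{P,G}\}$, whose intersection is $\{(v,v):v\in\T_{P,C}\}=\T_{(P,P),\Delta_C}$, of dimension $n=\dim\Delta_C$. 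Thus the smooth subvarieties $\Gamma$ and $\Delta_G$ meet cleanly along the smooth center $\Delta_C$.

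Next I would invoke the behaviour of a proper transform under a blowup along a clean intersection (see \cite[Section 13]{3264}). Because $\Gamma$ meets the blown-up locus $\Delta_G$ cleanly along $\Delta_C$, the proper transform is $\tilde\Gamma\cong\Bl_{\Delta_C}\Gamma$, and it meets the exceptional divisor in
$$\tilde\Gamma\cap E=\P(\N_{\Delta_C,\Gamma})\subset\P\!\left(\N_{\Delta_G,G\times G}\big|_{\Delta_C}\right)=E\big|_{\Delta_C}.$$
Here the embedding is induced by the bundle map $\N_{\Delta_C,\Gamma}\to\N_{\Delta_G,G\times G}|_{\Delta_C}$ arising from $\T_\Gamma|_{\Delta_C}\hookrightarrow\T_{G\times G}|_{\Delta_C}\twoheadrightarrow\N_{\Delta_G,G\times G}|_{\Delta_C}$; its kernel is $\T_\Gamma|_{\Delta_C}\cap\T_{\Delta_G}|_{\Delta_C}=\T_{\Delta_C}$, so cleanness is precisely what makes this map injective.

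It then remains to identify $\N_{\Delta_C,\Gamma}$ and to match the two projective-bundle structures. Applying Lemma \ref{nortan} with the ambient variety taken to be $C$ gives a canonical isomorphism $\N_{\Delta_C,\Gamma}=\N_{\Delta_C,C\times C}\cong\T_C$, whence $\tilde\Gamma\cap E\cong\P(\T_C)$. To see that this agrees with the identification $E\cong\P(\T_G)$ of Lemma \ref{nortan} applied to $G$ — so that $\P(\T_C)$ sits inside $\P(\T_G)|_C$ through the standard inclusion $\T_C\hookrightarrow\T_G|_C$ — I would chase the square formed by the two instances of Lemma \ref{nortan} together with the inclusions $\T_C\hookrightarrow\T_G|_C$ and $\N_{\Delta_C,\Gamma}\hookrightarrow\N_{\Delta_G,G\times G}|_{\Delta_C}$. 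Both isomorphisms send a tangent vector $v$ to the normal class of $(0,v)$, and both inclusions are the evident ones, so the square commutes and the two identifications are compatible.

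I expect the main obstacle to be making the clean-intersection step fully rigorous — in particular establishing that $\tilde\Gamma\cap E$ equals $\P(\N_{\Delta_C,\Gamma})$ with the stated embedding, rather than merely being contained in $E|_{\Delta_C}$. If a direct citation proves inconvenient, I would argue locally: choosing analytic coordinates on $G\times G$ in which $\Delta_G$ is a linear subspace and in which, by cleanness, $\T_\Gamma|_{\Delta_C}$ splits as $\T_{\Delta_C}$ together with a complement mapping isomorphically onto its image in the normal directions, one computes the limiting normal directions of $\Gamma$ as it approaches $\Delta_C$ and checks that they trace out exactly $\P(\N_{\Delta_C,\Gamma})$.
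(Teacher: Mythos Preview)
Your proposal is correct and follows essentially the same route as the paper: both identify $\tilde\Gamma\cap E$ with the projectivization of the image of $\T_\Gamma|_{\Delta_\Gamma}$ in $\N_{\Delta_G,G\times G}$ and then recognize that image as $\T_C$ under the identification of Lemma \ref{nortan}. The only difference is that you frame the first step through the language of clean intersections and invoke Lemma \ref{nortan} a second time (for $C$) to name the normal bundle, whereas the paper writes out the image $p(\T_\Gamma|_{\Delta_\Gamma})=\{((x,x),p(0,v)):v\in\T_{x,C}\}$ directly; your version is if anything more careful, since the paper asserts the ``by the construction of blowup'' step without verifying the transversality you check explicitly.
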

    
    \begin{proof}
        Let $p:\T_{ G\times G}|_{\Delta_G}\to \N_{\Delta_G, G\times G}:=\T_{ G\times G}|_{\Delta_G}/\T_{\Delta_G}$ be the quotient map. Observe that, by the construction of blowup, $\tilde\Gamma\cap E$ is the projectivization of $p(\T_\Gamma|_{\Delta_G\cap\Gamma})=p(\T_\Gamma|_{\Delta_\Gamma})$, where $\Delta_\Gamma:=\Gamma\cap\Delta_G$ is the diagonal of $\Gamma:=C\times C$. That is, $\tilde\Gamma\cap E$ is the projectivization of the vector bundle
        $$p(\T_\Gamma|_{\Delta_\Gamma})=\left\{((x,x),p(0,v)):x\in C,v\in \T_{x,C}\right\}$$
        over $\Delta_\Gamma$.
        
        Let $\iota:G\to\Delta_G\subset G\times G$ be the isomorphism from $G$ to the diagonal $\Delta_G$ of $ G\times G$, and $f:\iota^*\N_{\Delta_G, G\times G}\to\T_G$ be the isomorphism of bundles defined in Lemma \ref{nortan}. Then we have $p(\T_\Gamma|_{\Delta_\Gamma})=f^{-1}(\T_C)$, where $\T_C$ is treated as a subbundle of $\T_G|_C$. Thus $\tilde\Gamma\cap E\cong\P(\T_C)$.
    \end{proof}
    
    We proceed with the identifications $E\cong\P(\T_G)$ and $\tilde\Gamma\cap E\cong\P(\T_C)$.
    
    Consider the diagram
    \begin{align}\label{tiga}
        \xymatrixcolsep{2pc}
        \xymatrix{
        \P(\T_C)\ar[r]^-{d\gamma}\ar[rd]&\P(\gamma^*\T_G)\ar[r]^-{\tilde\gamma}\ar[d]_-{\pi_C}
        &E\cong\P(\T_G)\ar[d]^-{\pi_E}\\
        &C\ar[r]^-\gamma&G,
        }
    \end{align}
    where $\gamma:C\hookto G$ is the embedding of $C$, $\pi_C$ is the projection map and $\tilde\gamma:\P(\gamma^*\T_G)\to E$ is the map induced by the natural embedding from the pullback bundle $\gamma^*\T_G$ to $\T_G$.

    We first recall the following lemma from \cite[Proposition 9.13]{3264}.

    \begin{lem}\label{subproj}
        Let $Y$ be a smooth projective variety and $\L\subset\F$ be vector bundles over $Y$ of ranks $s$ and $r$ respectively, then
        \begin{align*}
            [\P(\L)]=\sum_{i=0}^{r-s}p^*c_i(\F/\L)\zeta_\F^{r-s-i}\in\A(\P(\F)),
        \end{align*}
        where $p:\P(\F)\to Y$ is the projection map and $\zeta_\F$ is the hyperplane section class of $\P(\F)$.
    \end{lem}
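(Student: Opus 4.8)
The plan is to realize $\P(\L)$ inside $\P(\F)$ as the zero scheme of a regular section of an explicit rank $r-s$ bundle, and then read off its class as a top Chern class. Throughout I use the ``lines in the fibres'' convention consistent with the rest of the paper (compare the role of $\O_E(-1)$ in Lemma \ref{tilD1}): $\P(\F)$ carries the tautological inclusion $\O_{\P(\F)}(-1)\hookto p^*\F$, and $\zeta_\F=c_1(\O_{\P(\F)}(1))$. Since $\F/\L$ appears in the statement, $\L$ is a subbundle with locally free quotient, so $\P(\L)\subset\P(\F)$ is a smooth closed subvariety whose fibrewise codimension, and hence codimension, is $(\dim Y+r-1)-(\dim Y+s-1)=r-s$.

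First I would build the section. Composing the tautological inclusion $\O_{\P(\F)}(-1)\hookto p^*\F$ with the pullback of the quotient map $\F\twoheadrightarrow\F/\L$ produces a bundle map $\O_{\P(\F)}(-1)\to p^*(\F/\L)$, equivalently a global section $\sigma$ of $\mathcal{G}:=p^*(\F/\L)\otimes\O_{\P(\F)}(1)$, which has rank $r-s$. By construction $\sigma$ vanishes at $(y,[v])$ precisely when $v\in\L_y$, so its zero locus is set-theoretically $\P(\L)$. Next I would pin down the scheme structure: trivializing $\F$ locally so that $\L$ is spanned by the first $s$ coordinate directions, $\sigma$ is, in a standard affine chart of $\P(\F)$, given by the last $r-s$ fibre coordinates; these form a regular sequence cutting out exactly the corresponding chart of $\P(\L)$, reduced and in codimension $r-s$. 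Hence $\sigma$ is a regular section of a bundle whose rank equals the codimension of its zero scheme, and so $[\P(\L)]=c_{r-s}(\mathcal{G})$ in $\A(\P(\F))$, by the standard fact that the class of the zero scheme of a regular section is the top Chern class of the bundle (see \cite{fulton} or \cite{3264}).

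Finally I would expand the top Chern class of the twist: for a rank $k$ bundle $\mathcal{H}$ and a line bundle $L$ one has $c_k(\mathcal{H}\otimes L)=\sum_{i=0}^{k}c_i(\mathcal{H})\,c_1(L)^{k-i}$, and applying this with $\mathcal{H}=p^*(\F/\L)$, $k=r-s$, and $c_1(L)=\zeta_\F$ gives
\[
[\P(\L)]=c_{r-s}\bigl(p^*(\F/\L)\otimes\O_{\P(\F)}(1)\bigr)=\sum_{i=0}^{r-s}p^*c_i(\F/\L)\,\zeta_\F^{r-s-i},
\]
which is the claimed formula. The only genuinely delicate step is the regularity of $\sigma$ — ensuring that $\P(\L)$ occurs as the zero scheme of $\sigma$ with its reduced structure and with no excess component, so that the class is literally $c_{r-s}(\mathcal{G})$ rather than a multiple or a correction of it — and this is exactly what the local coordinate description above secures; everything else is formal manipulation of Chern classes. (Alternatively one can avoid sections entirely: $\zeta_\F$ restricts to the hyperplane class $\zeta_\L$ of $\P(\L)$, so the coefficients of $[\P(\L)]$ in the $\A(Y)$-basis $1,\zeta_\F,\dots,\zeta_\F^{r-1}$ of $\A(\P(\F))$ can be recovered by pushing the products $\zeta_\F^{j}[\P(\L)]$ forward to $Y$ and using the known pushforwards of powers of $\zeta_\L$; a short Segre-class manipulation then yields the same formula.)
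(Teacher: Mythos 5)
Your proof is correct, and it is essentially the argument the paper implicitly relies on: the paper does not prove Lemma \ref{subproj} but simply recalls it from \cite[Proposition 9.13]{3264}, and your construction — exhibiting $\P(\L)$ as the zero scheme of the regular section of $p^*(\F/\L)\otimes\O_{\P(\F)}(1)$ induced by the tautological inclusion, checking regularity in local coordinates, and then expanding the top Chern class of the twist — is precisely the standard proof given in that reference. The convention check ($\O_{\P(\F)}(-1)\hookto p^*\F$, matching the paper's use of $\O_E(-1)$) and the attention to the reduced scheme structure are exactly the right points to address.
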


    For the case of $X$ being a curve, the following lemma will help us to compute the class $[E\cap\tilde\Gamma]$ and several other crucial classes in the following sections.

    \begin{lem}\label{projcur}
        Let $X$ be a genus $g$ smooth curve in a smooth variety $Y$, with the inclusion map $\iota:X\hookto Y$. Suppose that $\F\to Y$ is a rank $r$ vector bundle over $Y$ so that there is an inclusion $\T_X\hookto\iota^*\F$ of vector bundles over $X$. Consider the diagram
        \begin{align*}
            \xymatrixcolsep{2pc}
            \xymatrix{
            \P(\T_X)\ar@{^{(}->}[r]\ar[rd]&\P(\iota^*\F)\ar@{^{(}->}[r]^-{\tilde\iota}\ar[d]_p
            &\P(\F)\ar[d]^{\pi_\F}\\
            &X\ar@{^{(}->}[r]^-{\iota}&Y,
            }
        \end{align*}
        where $\tilde\iota:\P(\iota^*\F)\to\P(\F)$ is the canonical inclusion induced by $\iota$. Then
        $$\tilde\iota_*[\P(\T_X)]=(\pi_\F^*([\iota(X)]))\zeta_\F^{r-1}+\left(\pi_\F^*(c_1(\F)[\iota(X)]+\iota_*(K))\right)\zeta_\F^{r-2}.$$
    \end{lem}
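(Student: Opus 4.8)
The plan is to compute $\tilde\iota_*[\P(\T_X)]$ in two stages, exploiting the factorization $\P(\T_X)\hookto\P(\iota^*\F)\xrightarrow{\tilde\iota}\P(\F)$ already drawn in the diagram. First I would handle the inner inclusion $\P(\T_X)\hookto\P(\iota^*\F)$ using Lemma \ref{subproj} with $Y$ replaced by $X$, $\L=\T_X$ (rank $1$), and $\F$ replaced by $\iota^*\F$ (rank $r$). That lemma gives
\[
[\P(\T_X)]=\sum_{i=0}^{r-1}p^*c_i(\iota^*\F/\T_X)\,\zeta_{\iota^*\F}^{\,r-1-i}\in\A(\P(\iota^*\F)).
\]
Since $X$ is a curve, $c_i(\iota^*\F/\T_X)=0$ for $i\ge2$, so only the $i=0$ and $i=1$ terms survive:
\[
[\P(\T_X)]=\zeta_{\iota^*\F}^{\,r-1}+p^*c_1(\iota^*\F/\T_X)\,\zeta_{\iota^*\F}^{\,r-2},
\]
and from $0\to\T_X\to\iota^*\F\to\iota^*\F/\T_X\to0$ we get $c_1(\iota^*\F/\T_X)=\iota^*c_1(\F)-c_1(\T_X)=\iota^*c_1(\F)+K$, where $K$ is the canonical class of $X$ (using $c_1(\T_X)=-K$, i.e.\ $\deg K=2g-2$). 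Note $\zeta_{\iota^*\F}=\tilde\iota^*\zeta_\F$ since $\O_{\P(\iota^*\F)}(1)$ is the restriction of $\O_{\P(\F)}(1)$.

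Next I would push forward along $\tilde\iota$. The key observation is that $\tilde\iota$ is a regular embedding of codimension $r-(\dim X+1)$ wait—more precisely $\P(\iota^*\F)$ is the fibre product $X\times_Y\P(\F)$, so $\tilde\iota_*$ is computed by the projection formula together with $\tilde\iota_*(1)=\pi_\F^*[\iota(X)]$ (the class of the subvariety $\P(\iota^*\F)\subset\P(\F)$, which is the preimage of $\iota(X)$ under $\pi_\F$, hence $\pi_\F^*$ of its class). Using $\tilde\iota^*\zeta_\F=\zeta_{\iota^*\F}$ and $\tilde\iota^*\pi_\F^*=p^*\iota^*$, the projection formula gives for any class $\alpha$ on $X$:
\[
\tilde\iota_*\!\bigl(p^*\alpha\cdot\zeta_{\iota^*\F}^{\,k}\bigr)=\tilde\iota_*\!\bigl(\tilde\iota^*(\pi_\F^*\bar\alpha\cdot\zeta_\F^{\,k})\bigr)=\pi_\F^*\bar\alpha\cdot\zeta_\F^{\,k}\cdot\pi_\F^*[\iota(X)]=\pi_\F^*\bigl(\bar\alpha\cdot[\iota(X)]\bigr)\zeta_\F^{\,k},
\]
where $\bar\alpha=\iota_*\alpha$ pushed to $Y$. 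Applying this to the two terms of $[\P(\T_X)]$: the $\zeta_{\iota^*\F}^{\,r-1}$ term (with $\alpha=1$, so $\bar\alpha=[\iota(X)]$... careful, $\iota_*1$ on $X$ is $[\iota(X)]$ only after noting $1_X$ is the fundamental class) gives $\pi_\F^*[\iota(X)]\,\zeta_\F^{\,r-1}$, and the $p^*c_1(\iota^*\F/\T_X)\,\zeta_{\iota^*\F}^{\,r-2}$ term gives $\pi_\F^*\bigl(\iota_*(\iota^*c_1(\F)+K)\bigr)\zeta_\F^{\,r-2}$. Finally, by the projection formula on $\iota$ itself, $\iota_*(\iota^*c_1(\F)+K)=c_1(\F)\cdot[\iota(X)]+\iota_*(K)$, which yields exactly the claimed formula
\[
\tilde\iota_*[\P(\T_X)]=\bigl(\pi_\F^*[\iota(X)]\bigr)\zeta_\F^{\,r-1}+\bigl(\pi_\F^*(c_1(\F)[\iota(X)]+\iota_*(K))\bigr)\zeta_\F^{\,r-2}.
\]

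I expect the main subtlety—not difficulty, but the point requiring care—to be the bookkeeping of pullbacks and pushforwards of the tautological class: one must be sure that $\O_{\P(\iota^*\F)}(1)=\tilde\iota^*\O_{\P(\F)}(1)$ (immediate from the functoriality of projectivization of bundles), and that $\tilde\iota_*(1)=\pi_\F^*[\iota(X)]$ really holds, which follows because $\P(\iota^*\F)=\pi_\F^{-1}(\iota(X))$ is a fibre square with $\iota$ a regular embedding, so $\tilde\iota_*\tilde\iota^*=\cdot\,\pi_\F^*[\iota(X)]$ by the excess/refined Gysin formalism (or simply because $\P(\F)$ is smooth and $\P(\iota^*\F)$ is the scheme-theoretic preimage of the smooth subvariety $\iota(X)$, of the expected codimension). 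Everything else is the projection formula applied twice and the vanishing of $c_i$ for $i\ge2$ on a curve; there is no genuine obstacle, just the need to track which space each class lives on.
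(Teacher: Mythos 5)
Your argument is correct and is essentially the paper's own proof: apply Lemma \ref{subproj} to $\T_X\subset\iota^*\F$, note that $c_i(\iota^*\F/\T_X)=0$ for $i\ge2$ on a curve and $c_1(\iota^*\F/\T_X)=\iota^*c_1(\F)+K$, then push forward with the projection formula using $\tilde\iota_*(1)=\pi_\F^*[\iota(X)]$ and $\tilde\iota_*p^*=\pi_\F^*\iota_*$. The only blemish is the displayed ``general formula,'' where defining $\bar\alpha=\iota_*\alpha$ makes $\tilde\iota^*\pi_\F^*\bar\alpha=p^*\iota^*\iota_*\alpha\neq p^*\alpha$ (you want $\bar\alpha$ to be a lift of $\alpha$ to $Y$, or just use $\tilde\iota_*p^*\alpha=\pi_\F^*\iota_*\alpha$ directly); your two actual applications of it are nonetheless the correct ones, so the conclusion stands.
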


    \begin{proof}
        Let $K$ be the canonical class of $X$ and $\zeta_\F$ be the class of hyperplane sections of $\P(\F)$. Then by Lemma \ref{subproj} we have
        \begin{align*}
            [\P(\T_X)]=&\sum_{i=0}^{r-1}\left(\tilde\iota^*\zeta_\F^{r-1-i}\right)\left(p^*c_i(\tilde\iota^*\F/\T_X)\right)\\
            =&\tilde\iota^*\zeta_\F^{r-1}+\left(\tilde\iota^*\zeta_\F^{r-2}\right)\left(p^*c_1(\tilde\iota^*\F/\T_X)\right)\\
            =&\tilde\iota^*\zeta_\F^{r-1}+\left(\tilde\iota^*\zeta_\F^{r-2}\right)p^*(\tilde\iota^*c_1(\F)+K).
        \end{align*}

        Thus by the push-pull formula we conclude
        \begin{align*}
            \tilde\iota_*[\P(\T_X)]=&\tilde\iota_*\left(\tilde\iota^*\zeta_\F^{r-1}+\left(\tilde\iota^*\zeta_\F^{r-2}\right)p^*(\tilde\iota^*c_1(\F)+K)\right)\\
            =&\zeta_\F^{r-1}\tilde\iota_*(1)+\zeta_\F^{r-2}(\pi_\F^*c_1(\F))\tilde\iota_*(1)+\zeta_\F^{r-2}(\tilde\iota_*p^*(K))\\
            =&\zeta_\F^{r-1}(\pi_\F^*([\iota(X)]))+\zeta_\F^{r-2}(\pi_\F^*(c_1(\F)[\iota(X)])+\zeta_\F^{r-2}(\pi_\F^*\iota_*(K)).
        \end{align*}
        
    \end{proof}
    
    Applying the lemma above to diagram (\ref{tiga}), we conclude that, as a class in $\A(\Bl_{\Delta_G}( G\times G))$, with $G=\Gr(2,N+1)$,
    \begin{align}\label{gamgeo}
        [E\cap\tilde\Gamma]=&j_*\left([C]\zeta^{2N-3}+\pi_E^*(c_1(\T_G)[C]+\gamma_*(K))\zeta^{2N-4}\right)\\\nonumber
        =&d^\vee j_*\left(\bar\sigma_{N-1,N-2}\zeta^{2N-3}\right)+j_*\left(\left(d^\vee\bar\sigma_{N-1,N-2}\pi_E^*(c_1(\T_G))+(2g-2)\bar\sigma_{N-1,N-1}\right)\zeta^{2N-4}\right),
    \end{align}
    where $K$ is the canonical class of the curve $C$.

    Now, we are ready to compute $[\tilde\Gamma]$ for projective curves.

    \begin{lem}\label{tilgam}
        Let $C\subset G$ be a one-parameter family of lines in $\P^N$ of genus $g$. Then
        \begin{enumerate}[(i)]
            \item for $N=3$,
            $$[\tilde\Gamma]=(d^\vee)^2\pi^*(\sigma_{21}\otimes\sigma_{21})-j_*\left(d^\vee \bar\sigma_{21}\zeta^2+(4d^\vee+2g-2)\bar\sigma_{22}\zeta\right);$$
            \item for $N=4$,
            $$[\tilde\Gamma]=(d^\vee)^2\pi^*(\sigma_{32}\otimes\sigma_{32})- j_*\left(d^\vee\bar\sigma_{32}\zeta^4+(5d^\vee+2g-2)\bar\sigma_{33}\zeta^3\right),$$
        \end{enumerate}
        where $d^\vee$ is the coefficient of the class $[C]\in\A(G)$. In particular, if $X\subset\P^N$ be a degree $d$ genus $g$ smooth curve with its Gauss map $\gamma:X\to\GGr(1,N)$ a diffeomorphism to its image.
        where $d^\vee:=2d+2g-2$.
    \end{lem}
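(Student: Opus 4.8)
The plan is to follow the template of the proof of Lemma~\ref{tilD1}. The blowup formula (Lemmas~\ref{projbun} and~\ref{blring}) writes $[\tilde\Gamma]=\pi^*[\Gamma]+j_*\nu$ for a correction class $\nu\in\A_2(E)$, and for the blowup of a smooth centre a class in $\A(B)$ is determined by its pushforward to $G\times G$ together with its restriction to $E$ (immediate from the module description of $\A(B)$ in \cite[Section~13.6]{3264}, or from Lemmas~\ref{projbun}--\ref{blring}, using that $\pi_{E*}\zeta^k=0$ for $k<\rank\T_G-1$). So I would pin down $\nu$ by computing $\pi_*[\tilde\Gamma]$ and $j^*[\tilde\Gamma]$ and matching them against the corresponding data for the proposed right-hand side, in the same spirit as the coefficients $n_a$ were fixed in Lemma~\ref{tilD1}.

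For the pushforward: $\pi$ is an isomorphism off $E$, so $\pi|_{\tilde\Gamma}$ is birational onto $\Gamma$ and $\pi_*[\tilde\Gamma]=[\Gamma]=[C]\otimes[C]$; since $C$ is a curve in the $(2N-2)$-dimensional Grassmannian $G=\Gr(2,N+1)$, its class must be the unique $1$-dimensional Schubert class, $[C]=d^\vee\sigma_{N-1,N-2}$, whence $\pi_*[\tilde\Gamma]=(d^\vee)^2\,\sigma_{N-1,N-2}\otimes\sigma_{N-1,N-2}$. On the proposed class, $\pi_*j_*\nu=i_*\pi_{E*}\nu=0$, since $\nu$ involves only powers of $\zeta$ strictly below $\rank\T_G-1=2N-3$ and $\pi_{E*}$ annihilates those. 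For the restriction to $E$: under the identifications $E\cong\P(\T_G)$ and $\tilde\Gamma\cap E\cong\P(\T_C)$, the class $[\tilde\Gamma\cap E]$ is computed by Lemma~\ref{projcur} applied to diagram~(\ref{tiga}) with $\gamma\colon C\hookrightarrow G$ and $\F=\T_G$; this is (\ref{gamgeo}), and inserting $c_1(\T_G)=(N+1)\sigma_1$ together with the Pieri relation $\sigma_{N-1,N-2}\cdot\sigma_1=\sigma_{N-1,N-1}$ turns it into $d^\vee\bar\sigma_{N-1,N-2}\zeta^{2N-3}+\bigl((N+1)d^\vee+2g-2\bigr)\bar\sigma_{N-1,N-1}\zeta^{2N-4}$. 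On the proposed class, $j^*\pi^*[\Gamma]=\pi_E^*i^*[\Gamma]=(d^\vee)^2\pi_E^*(\sigma_{N-1,N-2}^2)=0$ (the self-product of a $1$-dimensional class on $G$ vanishes for $N\ge3$) and $j^*j_*\nu=-\zeta\nu$ by Lemma~\ref{blring}(i); these agree once we choose $\nu=-\bigl(d^\vee\bar\sigma_{N-1,N-2}\zeta^{2N-4}+((N+1)d^\vee+2g-2)\bar\sigma_{N-1,N-1}\zeta^{2N-5}\bigr)$ (multiplication by $\zeta$ only raises the exponents to $2N-3$ and $2N-4$, both still in the module basis of $\A(E)$, so no relation intervenes), and by the injectivity noted above $[\tilde\Gamma]=\pi^*[\Gamma]+j_*\nu$. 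Specializing $N=3$ and $N=4$ gives the two displayed formulas. Finally, for $C=\gamma(X)$ with $\gamma$ an embedding, $d^\vee=\int_X\gamma^*\sigma_1=-\deg\det\gamma^*\S$; the tautological inclusion $\O_X(-1)\hookrightarrow\gamma^*\S$ (the point $x$ lies on its tangent line) has line-bundle quotient $\T_X\otimes\O_X(-1)$, so $\deg\det\gamma^*\S=-d+(2-2g-d)=2-2g-2d$, i.e.\ $d^\vee=2d+2g-2$.

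The step I expect to need genuine care is the implicit assertion that $j^*[\tilde\Gamma]=[\tilde\Gamma\cap E]$ holds \emph{with multiplicity one}: unlike in Lemma~\ref{tilD1}, the term $\pi^*[\Gamma]$ contributes nothing to the restriction to $E$, so this cannot be read off from a leading coefficient and must be argued geometrically. The argument I have in mind: the scheme-theoretic intersection $\Gamma\cap\Delta_G$ is, in suitable local coordinates on $G\times G$, cut out inside the smooth surface $\Gamma\cong C\times C$ by the single reduced equation $u_1=u_1'$, i.e.\ it equals the reduced diagonal $\Delta_C$, a Cartier divisor on $\Gamma$; hence the strict transform $\tilde\Gamma=\Bl_{\Delta_C}\Gamma$ is isomorphic to $\Gamma$, and $E|_{\tilde\Gamma}$ is the reduced divisor $\Delta_C$, so $\tilde\Gamma$ meets $E$ transversally at a general point and $j^*[\tilde\Gamma]$ is the fundamental class of $\tilde\Gamma\cap E$ with no excess. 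This is also where smoothness of $C$, hence of $\Gamma$, enters. Everything else is Schubert-calculus bookkeeping built on the already established Lemmas~\ref{projbun}, \ref{blring}, \ref{nortan}, and~\ref{projcur}.
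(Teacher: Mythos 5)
Your proposal is correct and follows the same skeleton as the paper's proof: write $[\tilde\Gamma]=\pi^*[\Gamma]+j_*\nu$, note that $\pi^*[\Gamma]$ dies on restriction to $E$ because $\sigma_{N-1,N-2}^2=0$ in $\A(G)$ for $N\ge3$, compute $[\tilde\Gamma\cap E]$ via the identification $\tilde\Gamma\cap E\cong\P(\T_C)$ and Lemma~\ref{projcur} (this is exactly the paper's formula~(\ref{gamgeo})), and match coefficients in the $\A(\Delta_G)$-module basis of $\A(E)$. Two points where you deviate are worth recording. First, you explicitly prove that $j^*[\tilde\Gamma]$ equals the \emph{reduced} class $[\tilde\Gamma\cap E]$, by observing that $\Delta_G\cap\Gamma$ is a reduced Cartier divisor on the smooth surface $\Gamma$, so the strict transform is an isomorphism onto $\Gamma$ and meets $E$ with multiplicity one. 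This is a genuine improvement: in Lemma~\ref{tilD1} the paper could pin down the multiplicity $m$ by comparing a leading coefficient coming from $\pi^*[D_1]\cdot[E]$, but here that term vanishes, so there is no such anchor and the paper's appeal to ``the same class up to a multiplicity'' silently assumes $m=1$; your argument supplies the missing justification. Second, you derive $d^\vee=2d+2g-2$ from the tautological sequence $0\to\O_X(-1)\to\gamma^*\S\to\T_X\otimes\O_X(-1)\to0$, whereas the paper's Lemma~\ref{dvee} identifies $d^\vee$ with the degree of the dual hypersurface and quotes (and corrects) a formula from the literature; your route is self-contained and arguably cleaner, and it makes transparent where the hypothesis that $\gamma$ is birational onto its image is used. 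The remaining bookkeeping ($c_1(\T_G)=(N+1)\sigma_1$, the Pieri step $\sigma_{N-1,N-2}\sigma_1=\sigma_{N-1,N-1}$, and the observation that the relevant $\zeta$-exponents stay below $2N-2$ so no relation in $\A(E)$ interferes with the coefficient match) agrees with the paper.
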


    \begin{proof}
        For $N=3$, recall that $c_1(\T_{\GGr(1,3)})=4\sigma_1$, so (\ref{gamgeo}) becomes
        \begin{align}\label{gamgeo3}
            [E\cap\tilde\Gamma]=&d^\vee j_*\left(\bar\sigma_{21}\zeta^3\right)+j_*\left((d^\vee\bar\sigma_{21}(4\bar\sigma_1)+(2g-2)\bar\sigma_{22})\zeta^2\right)\\\nonumber
            =&d^\vee j_*\left(\bar\sigma_{21}\zeta^3\right)+(4d^\vee+2g-2)j_*\left(\bar\sigma_{22}\zeta^2\right).
        \end{align}

        For $N=4$, recall that $c_1(\T_{\GGr(1,4)})=5\sigma_1$, so (\ref{gamgeo}) yields
        \begin{align}\label{gamgeo4}
            [E\cap\tilde\Gamma]=&(2d+2g-2)j_*\left(\bar\sigma_{32}\zeta^{5}\right)+j_*\left(\left((2d+2g-2)\bar\sigma_{32}(5\bar\sigma_1)+(2g-2)\bar\sigma_{33}\right)\zeta^4\right)\\\nonumber
            =&d^\vee j_*\left(\bar\sigma_{32}\zeta^{5}\right)+(5d^\vee+2g-2)j_*\left(\bar\sigma_{33}\zeta^4\right).
        \end{align}

        The lemma follows from comparing the coefficients above and the coefficients in (\ref{gamalg}) and the Lemma below.
    \end{proof}

    The following fact justifies the notation $d^\vee$ and provides a formula for this coefficient.
    
    \begin{lem}\label{dvee}
        Let $X\subset\P^N$ be a degree $d$ genus $g$ smooth nondegenerate curve. Let $d^\vee$ be the degree of its dual hypersurface $X^\vee$. Then the class of its Gauss image $\gamma(X)$ is given by
        $$[\gamma(X)]=d^\vee\sigma_{N-1,N-2}=(2d+2g-2)\sigma_{N-1,N-2}.$$
    \end{lem}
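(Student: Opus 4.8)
The plan is to compute the class $[\gamma(X)] \in \A(\GGr(1,N))$ by intersecting it with a complementary-dimensional Schubert cycle and interpreting the result geometrically. Since $\gamma(X)$ is a curve in $\GGr(1,N)$, its class is $[\gamma(X)] = d^\vee \sigma_{N-1,N-2}$ for a unique integer $d^\vee$, because $\A_1(\GGr(1,N))$ is generated by $\sigma_{N-1,N-2}$ (the class of the lines through a fixed point, or equivalently a line pencil in a plane). To extract $d^\vee$, I would cap with the divisor class $\sigma_1$, which is the locus of lines meeting a fixed $(N-2)$-plane $\Lambda \subset \P^N$: we have $d^\vee = \deg\bigl(\sigma_1 \cdot [\gamma(X)]\bigr)$, the number of tangent lines of $X$ meeting a general codimension-$2$ linear subspace. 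Equivalently, projecting $X$ from $\Lambda$ to $\P^1$, this counts the ramification points of the induced degree-$d$ map $X \to \P^1$, so $d^\vee = \deg\gamma^* \sigma_1$ and this also identifies $d^\vee$ with the degree of the dual variety $X^\vee \subset (\P^N)^\vee$ (a general pencil of hyperplanes cuts $X^\vee$ in $d^\vee$ points, each a hyperplane tangent to $X$).

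The key step is then the Riemann--Hurwitz / ramification count. First I would fix a general $(N-2)$-plane $\Lambda$ and let $\pi_\Lambda : X \to \P^1$ be the projection; by generality this is a finite morphism of degree $d = \deg X$, and it is simply ramified, with a tangent line $T_{x,X}$ meeting $\Lambda$ exactly at the ramification points of $\pi_\Lambda$. Applying Riemann--Hurwitz,
\[
2g - 2 = d(2\cdot 0 - 2) + \deg R = -2d + \deg R,
\]
where $R$ is the ramification divisor, so $\deg R = 2d + 2g - 2$. Since $\deg R = \deg\bigl(\sigma_1\cdot[\gamma(X)]\bigr) = d^\vee$, we conclude $d^\vee = 2d + 2g - 2$, and hence $[\gamma(X)] = (2d+2g-2)\,\sigma_{N-1,N-2}$.

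The main thing to be careful about — rather than a deep obstacle — is the genericity/transversality input: one must check that for general $\Lambda$ the projection $\pi_\Lambda$ is indeed a separable, simply ramified morphism whose ramification points correspond bijectively (with multiplicity one) to points where $T_{x,X} \cap \Lambda \ne \emptyset$, and that this intersection number computed on $X$ equals the intersection $\sigma_1 \cdot [\gamma(X)]$ computed on $\GGr(1,N)$. In characteristic zero this is standard (generic smoothness, Bertini, and the fact that $\gamma$ is an immersion so no extra multiplicities are introduced along $\gamma(X)$), and the identification of $d^\vee$ with $\deg X^\vee$ is the classical dual-variety reflexivity statement; alternatively one can bypass the dual variety entirely and simply \emph{define} $d^\vee$ by the ramification count, which is all that is used in the sequel. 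I would also remark that the nondegeneracy hypothesis on $X$ guarantees $X^\vee$ is a hypersurface so that "$\deg X^\vee$" makes sense.
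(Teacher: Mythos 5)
Your proposal is correct, and its first half coincides with the paper's argument: both extract the coefficient $d^\vee$ by capping $[\gamma(X)]$ with $\sigma_1$, interpret the result as the number of points $x\in X$ with $T_{x,X}$ meeting a general $(N-2)$-plane $\Lambda$, and identify this with $\deg X^\vee$ via the pencil of hyperplanes containing $\Lambda$. Where you diverge is in establishing the numerical formula $d^\vee=2d+2g-2$: the paper simply cites this as a known fact (from Gelfand--Kapranov--Zelevinsky, while noting the formula printed there contains a typo), whereas you derive it on the spot by observing that the tangent lines meeting $\Lambda$ are exactly the ramification points of the degree-$d$ projection $\pi_\Lambda:X\to\P^1$ (since $T_{x,X}\subset\<\Lambda,x\>$ iff $T_{x,X}\cap\Lambda\ne\emptyset$, as a line and an $(N-2)$-plane inside the hyperplane $\<\Lambda,x\>$ must meet), and then applying Riemann--Hurwitz to get $\deg R=2d+2g-2$. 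Your route is self-contained and arguably preferable given the misprint in the cited source; the paper's is shorter. Your caveats about genericity (simple ramification, avoiding bitangent lines through $\Lambda$, and birationality of the Gauss map onto its image in characteristic zero so that the set-theoretic count computes the intersection number) are exactly the points that need checking, and they all hold here by standard Bertini-type arguments.
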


    \begin{proof}
        Let $[\gamma(X)]=\alpha\sigma_{N-1,N-2}$ for some integer $\alpha$. 
        Let $L\subset\P^N$ be a generic $(N-2)$-dimensional linear subspace.
        
        On the one hand, $\sigma_1$ is by definition the class of lines in $\P^N$ that meet $L$, so
        \begin{align*}
            \alpha\sigma_{22}=[\gamma(X)]\sigma_1=[\{T_{x,X}:L\cap T_{x,X}\ne\emptyset\}];
        \end{align*}
        equivalently, $\alpha=|\{x\in X:L\cap T_{x,X}\ne\emptyset\}|$.

        On the other hand, recall that the dual variety of a projective variety is defined by
        $$X^\vee:=\{[H]:T_{x,X}\subset H\text{ for some }x\in X\}.$$
        Observe that $\{[H]:L\subset H\}$ is a generic line in $(\P^N)^*$, so
        \begin{align*}
            d^\vee:=\deg X^\vee=|X^\vee\cap\{[H]:L\subset H\}|=|\{[H]:\<T_{x,X},L\>\subset H\}|=\alpha.
        \end{align*}
        To concludes the proof, we recall that the degree $d^\vee$ of $X^\vee$ of a smooth projective curve is given by $d^\vee=2d+2g-2$. The formula comes from \cite[Page 61]{Gel}. However, we note that there formula there is $2d-2+g$, which is a typo, but one can deduce the correct formula from the proof on that page. 
    \end{proof}

\subsection{Self-intersection of one-parameter families of lines in $\P^3$}\hfill\\
    Now that $[\tilde D_1],[\tilde\Gamma]$, and the blowup structure are established, we are ready to study the self-intersection of one-parameter families of lines in $\P^3$.

    \begin{lem}\label{CP3}
        Let $C\subset G:=\GGr(1,3)$ be an irreducible curve. Suppose that $C$ parameterizes a one-parameter family in $\P^3$ such that any two distinct lines in the family do not intersect. Then the genus $g$ of the curve $C$ must be $0$.
    \end{lem}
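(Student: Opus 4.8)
The plan is to reduce the skewness of $C$ to the way the proper transforms of $\D_0$ and $\Gamma:=C\times C$ meet in the blowup $B:=\Bl_{\Delta_G}(G\times G)$, and then to read $g$ off the rigidity this forces. We may assume $C$ is smooth (otherwise replace $C$ by its normalization, which preserves the genus and the skewness hypothesis). Under $\GGr(1,3)=\Gr(2,4)$ we have $\D_0=\D_0(1,3)=D_1(2,4)$, and, as recalled at the start of Section \ref{three}, $C$ is skew precisely when $(\D_0\cap\Gamma)\setminus\Delta_G=\emptyset$. The first step is to note that, since $\Delta_\Gamma:=\Gamma\cap\Delta_G$ lies in the center of the blowup, over $B\setminus E\cong(G\times G)\setminus\Delta_G$ the proper transforms $\tilde D_1$ and $\tilde\Gamma$ restrict to $\D_0\setminus\Delta_G$ and $\Gamma\setminus\Delta_\Gamma$, whose intersection is $(\D_0\cap\Gamma)\setminus\Delta_G=\emptyset$. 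Hence $\tilde D_1\cap\tilde\Gamma\subset\tilde\Gamma\cap E$, and by the identification $\tilde\Gamma\cap E\cong\P(\T_C)$ established before Lemma \ref{subproj}, in fact $\tilde D_1\cap\tilde\Gamma\subset\P(\T_C)\cong C$, an irreducible curve.

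Next I would transport this containment to cycle classes. The variety $B$ is smooth, $\tilde D_1$ is a reduced effective divisor on it, and $\tilde\Gamma\not\subset\tilde D_1$ (otherwise all pairs of distinct lines of the family would meet, contradicting skewness); hence $[\tilde D_1][\tilde\Gamma]\in\A_1(B)$ is an effective one-cycle supported on the irreducible curve $\P(\T_C)$. Therefore $[\tilde D_1][\tilde\Gamma]=m\,[E\cap\tilde\Gamma]$ for some integer $m\ge0$, so $[\tilde D_1][\tilde\Gamma]$ and $[E\cap\tilde\Gamma]$ are proportional in $\A_1(B)$.

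It then remains to compute the two classes and extract the constraint. Write $[C]=d^\vee\sigma_{21}$, with $d^\vee\ge1$ since $C$ is an effective curve, and let $g$ be its genus. Combining Lemma \ref{tilD1}(i) and Lemma \ref{tilgam}(i) with the multiplication rules of Lemma \ref{blring}, the identities $c_1(\T_{\Gr(2,4)})=4\sigma_1$ and $\sigma_1\sigma_{21}=\sigma_{22}$, and the instance $\pi^*(\sigma_{21}\otimes\sigma_{22}+\sigma_{22}\otimes\sigma_{21})=j_*(\bar\sigma_{21}\zeta^3+4\bar\sigma_{22}\zeta^2)$ of Lemma \ref{blring}(iv), a direct expansion gives
\[
[\tilde D_1][\tilde\Gamma]=\big((d^\vee)^2-2d^\vee\big)\,j_*(\bar\sigma_{21}\zeta^3)+\big(4(d^\vee)^2-10d^\vee-4g+4\big)\,j_*(\bar\sigma_{22}\zeta^2),
\]
while equation (\ref{gamgeo3}) reads
\[
[E\cap\tilde\Gamma]=d^\vee\,j_*(\bar\sigma_{21}\zeta^3)+(4d^\vee+2g-2)\,j_*(\bar\sigma_{22}\zeta^2).
\]
Since $j_*(\bar\sigma_{21}\zeta^3)$ and $j_*(\bar\sigma_{22}\zeta^2)$ are linearly independent in $\A_1(B)$ --- they are distinct basis elements in the presentation of $\A(B)$ provided by Lemma \ref{blring} and Lemma \ref{projbun} --- proportionality of the two displayed classes forces the $2\times2$ determinant of their coefficient vectors to vanish:
\[
\big((d^\vee)^2-2d^\vee\big)(4d^\vee+2g-2)-d^\vee\big(4(d^\vee)^2-10d^\vee-4g+4\big)=2g(d^\vee)^2=0.
\]
As $d^\vee\ge1$, this gives $g=0$.

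I expect the main obstacle to be the first step: showing that skewness confines $\tilde D_1\cap\tilde\Gamma$ not merely to $E$ but to the single irreducible curve $\P(\T_C)$. This is what collapses $[\tilde D_1][\tilde\Gamma]$, which a priori depends on both $d^\vee$ and $g$, onto the line spanned by $[E\cap\tilde\Gamma]$, thereby producing the one scalar equation $2g(d^\vee)^2=0$; it is where blowing up the diagonal of $G\times G$ does the real work. The two class computations, by contrast, are the routine bookkeeping already assembled in Lemmas \ref{tilD1}, \ref{tilgam}, and \ref{blring}.
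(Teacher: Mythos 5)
Your proposal is correct and follows essentially the same route as the paper: reduce skewness to the containment of $\tilde D_1\cap\tilde\Gamma$ in the irreducible curve $E\cap\tilde\Gamma\cong\P(\T_C)$, deduce $[\tilde D_1][\tilde\Gamma]=m[E\cap\tilde\Gamma]$, and compare the two class computations (the paper solves for $m=d^\vee-2$ and gets $2d^\vee g=0$; your determinant condition $2g(d^\vee)^2=0$ is the same constraint). Your added justification that the intersection cycle is effective and supported on $\P(\T_C)$, via $\tilde D_1$ being a divisor not containing $\tilde\Gamma$, is a slightly more careful version of the paper's argument rather than a different one.
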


    \begin{proof}
        Write $[C]=d^\vee\sigma_{21}$, $\Gamma:=C\times C$ and $\tilde\Gamma\subset B:=\Bl_{\Delta_G}G\times G$ the proper transform of $\Gamma$ as usual. Then two distinct lines in $C$ do not intersect only if, set theoretically, either $\tilde\Gamma\cap\tilde D_1=\emptyset$ or $\tilde D_1\cap\tilde\Gamma\subset E\cap\tilde\Gamma$. Since $C$ is irreducible and $E\cap\tilde\Gamma=\P(\T_C)$, we must have $[\tilde D_1][\tilde\Gamma]=m[E\cap\tilde\Gamma]$ for some integer $m\ge0$.

        By (\ref{gamgeo3}),
        \begin{align}\label{gamgeo3'}
            [E\cap\tilde\Gamma]=&d^\vee j_*\left(\bar\sigma_{21}\zeta^3\right)+(4d^\vee+2g-2)j_*\left(\bar\sigma_{22}\zeta^2\right).
        \end{align}
        By Lemma \ref{tilD1}, Lemma \ref{tilgam}, and Lemma \ref{blring},
        \begin{align*}
            [\tilde D_1][\tilde\Gamma]=&\left({d^\vee}^2\pi^*(\sigma_{21}\otimes\sigma_{21})-d^\vee j_*\left(\bar\sigma_{21}\zeta^2\right)-(4d^\vee+2g-2)j_*(\bar\sigma_{22}\zeta)\right)\\
            &\left(\pi^*(\sigma_1\otimes\sigma_0+\sigma_0\otimes\sigma_1)-j_*(2)\right)\\
            =&{d^\vee}^2\pi^*(\sigma_{22}\otimes\sigma_{21}+\sigma_{21}\otimes\sigma_{22})-2d^\vee j_*\left(\bar\sigma_{22}\zeta^2\right)\\
            &-2d^\vee j_*\left(\bar\sigma_{21}\zeta^3\right)-(8d^\vee +4g-4)j_*\left(\bar\sigma_{22}\zeta^2\right).
        \end{align*}
        Thanks to Lemma \ref{pupu}, Lemma \ref{blring} (iv), Lemma \ref{nortan}, and $c_1(\T_G)=4\sigma_1$, we have
        \begin{align*}
            \pi^*(\sigma_{22}\otimes\sigma_{21}+\sigma_{21}\otimes\sigma_{22})=&\pi^*(i_*(\sigma_{21}))\\
            =&j_*\left(\bar\sigma_{21}\left(\zeta^3+\pi_E^*(c_1(\T_G))\zeta^2+O\right)\right)\\
            =&j_*\left(\bar\sigma_{21}\left(\zeta^3+4\bar\sigma_1\zeta^2+O\right)\right)\\
            =&j_*(\bar\sigma_{21}\zeta^3+4\bar\sigma_{22}\zeta^2),
        \end{align*}
        where $O$ is the terms of $\bar\sigma_a$ with $|a|>1$. Put this into the equation above, we obtain that
        \begin{align}\label{tilinter}
            [\tilde D_1][\tilde\Gamma]=&d^\vee\left(d^\vee-2\right)j_*\left(\bar\sigma_{21}\zeta^3\right)+\left(4{d^\vee}^2-10d^\vee-4g+4\right)j_*\left(\bar\sigma_{22}\zeta^2\right).
        \end{align}
        Since by assumption $d^\vee\ne0$, by comparing the coefficients in \ref{gamgeo3'} and \ref{tilinter} we obtain $m=d^\vee-2$ and $2d^\vee g=0$, so we must have $g=0$.
    \end{proof}

\subsection{Self-intersection of scrolls in $\P^3$}\label{scroll3}\hfill\\
    Before we consider the skew curve in $\P^3$, let us examine how we can apply this tool to the classical scroll construction.

    Let $C$ be a smooth genus $g$ curve and $\iota_1,\iota_2:C\hookto\P^N$ be two embeddings of $C$ into $\P^N$ with degrees $d_1$ and $d_2$ respectively. Define the scroll $S$ with respect to $(\iota_1,\iota_2)$ by
    \begin{align*}
        S:=\bigcup_{t\in C}\<\iota_1(t),\iota_2(t)\>.
    \end{align*}
    In this case, we say that $S$ is a scroll of bidegree $(d_1,d_2)$ over $C$. It is convenient to assume a mild generic condition for $S$ a follows. We said that a scroll $S$ with respect to $\iota_1,\iota_2$ is generic if $\iota_1,\iota_2$ are generic under $\PGL(N+1)$ actions. In particular, for a generic $S$ in $\P^N$ with $N\ge3$, the embedded tangent lines $T_{\iota_1(t),\iota_1(C)}$ and $T_{\iota_2(t),\iota_2(C)}$ are skew for generic $t\in C$. A scroll $S$ with respect to $\iota_1,\iota_2$ is said to be skew if $\<\iota_1(t),\iota_2(t)\>\cap\<\iota_1(s),\iota_2(s)\>=\emptyset$ for any two distinct $t,s\in C$. Note that we do not assume that $\iota_1(C)$ and $\iota_2(C)$ lie in two disjoint linear subspaces in $\P^N$ for a scroll.

    In this subsection, we answer the question that for what $(d_1,d_2)$ and genus $g$ a generic scroll in $\P^3$ is skew . 

    Observe that a scroll $S$ gives us a map $C\to G:=\GGr(1,3)$ given by $t\mapsto\<\iota_1(t),\iota_2(t)\>$. By identifying $C$ with its image in $G$, we have by definition $[C]=(\deg S)\sigma_{21}$. Therefore, to answer the above question it suffices to compute $\deg S$ and then apply Lemma \ref{CP3}.

    \begin{lem}\label{scroll}
        Let $S\subset\P^N$ be a generic scroll with bidegree $(d_1,d_2)$, then $\deg S=d_1+d_2$.
    \end{lem}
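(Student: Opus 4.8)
The plan is to compute $\deg S$ as the degree of the image of the structure morphism from a $\P^1$-bundle over $C$ to $\P^N$. Concretely, since $\iota_1,\iota_2:C\hookto\P^N$ determine two line subbundles $\L_1,\L_2$ of the trivial bundle $\V=\O_C^{N+1}$ (the pullbacks of $\O_{\P^N}(-1)$ under $\iota_1$ and $\iota_2$ of degrees $-d_1,-d_2$), the scroll is the image of $\P(\L_1\oplus\L_2)\to\P^N$ induced by the inclusion $\L_1\oplus\L_2\hookto\V$. For a generic scroll this map is birational onto its image (here I would use genericity: the rulings $\<\iota_1(t),\iota_2(t)\>$ are pairwise distinct lines for generic $t$, so a general point of $S$ lies on a unique ruling and the fibre over it is a single point). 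Hence $\deg S$ equals the top self-intersection of the relative hyperplane class: with $p:\P(\L_1\oplus\L_2)\to C$ the projection and $\xi$ the class with $p_*\xi=1$ pulled back from $\O_{\P^N}(1)$, one has $\deg S=\int_{\P(\L_1\oplus\L_2)}\xi^2$.

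The key computation is then the standard relation in $\A(\P(\L_1\oplus\L_2))$, namely $\xi^2 = -p^*c_1(\L_1\oplus\L_2)\,\xi - p^*c_2(\L_1\oplus\L_2)$, which since $C$ is a curve reduces to $\xi^2 = -p^*c_1(\L_1\oplus\L_2)\,\xi$. Pushing forward, $\deg S = \int_{\P(\L_1\oplus\L_2)}\xi^2 = -\deg c_1(\L_1\oplus\L_2) = -(\deg\L_1 + \deg\L_2) = d_1+d_2$, using $\deg\L_i = -d_i$. The sign bookkeeping (whether one uses $\O_{\P^N}(-1)$ or its dual, and the Grothendieck relation convention for $\P(\E)$) is the one spot where care is needed, but it is pinned down by the sanity check that a scroll over $C\cong\P^1$ of bidegree $(1,1)$ is a smooth quadric surface in $\P^3$, which has degree $2=1+1$.

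The main obstacle I expect is justifying that the map $\P(\L_1\oplus\L_2)\to S$ is birational — i.e. that the generic scroll is not ruled multiply over $C$ and has no other source of degree drop. This should follow from the generic position hypothesis: after a general $\PGL(N+1)$ move of $\iota_1$ and $\iota_2$, the assignment $t\mapsto\<\iota_1(t),\iota_2(t)\>$ is generically injective into $\GGr(1,N)$ (two distinct parameters giving the same line would force $\iota_1(C)\cup\iota_2(C)$ to contain two points on a common line in a constrained way, which a dimension count rules out for $N\ge 3$, and is already implicit in the setup where $\tilde D_1\cap\tilde\Gamma$ is not all of $\tilde\Gamma$), and that a general point of a general ruling lies on only that ruling. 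Once birationality is in hand, the Chern-class computation above is routine. I would also remark that the same argument shows $[C]=(d_1+d_2)\sigma_{N-1,N-2}$ in $\A(\GGr(1,N))$ for any $N\ge 3$, which is what feeds into the subsequent application of Lemma \ref{CP3}.
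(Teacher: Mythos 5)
Your proof is correct, and its core is the same as the paper's: realize $S$ as the image of $\P(\L_1\oplus\L_2)\to\P^N$ and read off the degree from the first Chern class of a rank-two bundle over a curve (the $c_2$ term dying for dimension reasons). Where you diverge is in how the general-position case is handled. The paper first treats the case where $\iota_1(C)$ and $\iota_2(C)$ lie in disjoint linear subspaces and are given by complete linear systems — there the map $\P(\L_1\oplus\L_2)\hookto\P^N$ is an embedding and the degree count is classical — and then reduces the general case to this one by observing that a generic pair of curves is a linear projection of such a configuration from a larger $\P^{\tilde N}$. You instead take $\L_i=\iota_i^*\O_{\P^N}(-1)$ as line subbundles of the trivial bundle and compute $\int\xi^2$ directly via the Grothendieck relation, which forces you to confront birationality of $\P(\L_1\oplus\L_2)\to S$ head-on rather than burying it in the projection step (where it appears as the statement that generic projection preserves degree). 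Your route is more self-contained and makes explicit the one genuinely nontrivial point — that a generic scroll is singly ruled over $C$ — which the paper's projection argument uses implicitly; the paper's route buys brevity by quoting the classical embedded case. Your sanity check via the quadric in $\P^3$ correctly pins down the sign conventions, and your closing remark that the argument gives $[C]=(d_1+d_2)\sigma_{N-1,N-2}$ is exactly how the lemma is consumed downstream.
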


    \begin{proof}
        Let  $\iota_1,\iota_2$ be defined as above. Recall that $\deg S=d_1+d_2$ when $\iota_1$ and $\iota_2$ are embeddings into two disjoint linear subspaces given by two complete linear systems. Indeed, if $\iota_i:C\hookto\P^{N_i}\subset\P^N$ is given by very ample line bundle $\L_i$'s with $\P^{N_1}\cap\P^{N_2}=\emptyset$, then $S$ is the image of the embedding $\P(\L_1\oplus\L_2)\hookto\P^N$ under the hyperplane section bundle, and thus of degree $c_1(\L_1)+c_1(\L_2)=d_1+d_2$.
        
        The Lemma then follows from the observation that two disjoint curves $C_1,C_2\subset\P^N$ in general position with fixed degrees can be obtained by projection from $\tilde C_1,\tilde C_2\subset\P^{\tilde N}$ with the curves $\tilde C_1,\tilde C_2$ lie in disjoint linear subspaces in $\P^{\tilde N}$.
    \end{proof}

    We now arrive at the following theorem.

    \begin{thm}
        Suppose that $S\subset\P^3$ is a skew generic scroll, then $S$ is a rational normal scroll; that is, $S$ is over $\P^1$ with bidegree $(1,1)$.
    \end{thm}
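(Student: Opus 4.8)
The plan is to reduce the statement about scrolls in $\P^3$ to the already-established Lemma \ref{CP3} by combining two ingredients: the degree formula of Lemma \ref{scroll}, which gives $\deg S = d_1 + d_2$, and the fact that a skew scroll induces an irreducible curve $C \subset G := \GGr(1,3)$ parameterizing a pairwise-nonintersecting family of lines. First I would observe that the map $t \mapsto \langle \iota_1(t), \iota_2(t)\rangle$ sends $C$ to $G$, and that for a skew scroll the images $\langle \iota_1(t), \iota_2(t)\rangle$ and $\langle \iota_1(s), \iota_2(s)\rangle$ are disjoint for distinct $t,s$, so in particular the map is injective and $C$ (identified with its image) is an irreducible curve in $G$ satisfying the hypothesis of Lemma \ref{CP3}. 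Hence $g = 0$, so $C \cong \P^1$.

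Next I would pin down the bidegree. With $g = 0$ the curve $C$ is rational, so $\iota_1, \iota_2 : \P^1 \hookrightarrow \P^3$ are embeddings of degrees $d_1, d_2 \ge 1$, and $[C] = (\deg S)\sigma_{21} = (d_1+d_2)\sigma_{21}$ in $\A(G)$. The remaining task is to show $d_1 = d_2 = 1$. For this I would use the skewness condition more quantitatively: by the proof of Lemma \ref{CP3}, a skew family forces the intersection multiplicity $m = d^\vee - 2$ to be a nonnegative integer, where here $d^\vee = \deg S = d_1 + d_2$. This already gives $d_1 + d_2 \ge 2$, which is automatic, so I need a finer constraint. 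The natural place to get it is the geometry of the scroll itself: a skew scroll in $\P^3$ is a (possibly singular) surface swept by pairwise-disjoint lines, hence a \emph{smooth} ruled surface embedded in $\P^3$, and the only smooth ruled surfaces in $\P^3$ are the quadric (degree $2$, bidegree $(1,1)$) and the plane. Since $\deg S = d_1 + d_2 \ge 2$ and a nondegenerate ruled surface in $\P^3$ of degree $2$ must be the smooth quadric, I would conclude $d_1 + d_2 = 2$, forcing $d_1 = d_2 = 1$; the surface is then the rational normal scroll (the smooth quadric surface), as claimed.

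Alternatively — and this is probably the cleaner route that matches the paper's method — I would avoid invoking external classification of ruled surfaces and instead extract the bidegree constraint directly from the intersection-theoretic bookkeeping already set up. One applies Lemma \ref{tilD1}, Lemma \ref{tilgam}, and Lemma \ref{blring} with $g = 0$ and $d^\vee = d_1 + d_2$ to compute $[\tilde D_1][\tilde\Gamma]$ in $\A(B)$ and compares it with $[E \cap \tilde\Gamma]$; the requirement that $[\tilde D_1][\tilde\Gamma] = m [E \cap \tilde\Gamma]$ holds with $m \ge 0$ and that there be no residual intersection component away from $\tilde\Gamma \cap E$ should, together with the structure of a generic scroll, force $\deg S = 2$. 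The subtlety — and the main obstacle — is that the equation from Lemma \ref{CP3} alone only yields $g = 0$ and does not by itself bound $d_1 + d_2$: for each $d^\vee \ge 2$ the numerical identity is satisfied with $m = d^\vee - 2$. So the genuine content of the theorem lies in ruling out $\deg S \ge 3$, which is \emph{not} purely numerical and must use that a skew scroll of degree $\ge 3$ in $\P^3$ would be a singular surface (its ruling lines, being pairwise disjoint, cannot all avoid a singular point unless the surface is smooth, but a smooth surface of degree $\ge 3$ in $\P^3$ is not ruled).

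Therefore my recommended write-up is: (i) pass to $C \subset G$ and apply Lemma \ref{CP3} to get $g = 0$; (ii) note $[C] = (d_1 + d_2)\sigma_{21}$ via Lemma \ref{scroll}; (iii) argue that a skew scroll is a smooth surface in $\P^3$ and invoke the classification of smooth surfaces in $\P^3$ — or equivalently that a nonsingular ruled surface in $\P^3$ has degree at most $2$ — to conclude $d_1 + d_2 = 2$, whence bidegree $(1,1)$ and $S$ is the smooth quadric, i.e.\ the rational normal scroll. The hardest step is step (iii): justifying that pairwise-disjointness of the rulings forces smoothness, and then that smoothness caps the degree at $2$. I expect this to require a short argument that any singular point of $S$ would have to lie on a ruling line, and that for a scroll a singular point lies on infinitely many ruling lines (or on a ruling line meeting infinitely many others), contradicting disjointness.
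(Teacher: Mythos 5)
Your step (i) matches the paper exactly: pass to $C\subset\GGr(1,3)$ and apply Lemma~\ref{CP3} together with Lemma~\ref{scroll} to get $g=0$. The problem is step (iii). The implication ``the rulings are pairwise disjoint, hence $S$ is a smooth surface in $\P^3$'' is false, and the paper's own example immediately following the theorem refutes it: the two conics $\iota_1(t_0:t_1)=(t_0^2:2t_0t_1:t_1^2:0)$ and $\iota_2(t_0:t_1)=(2t_0t_1:t_1^2:0:t_0^2)$ define a \emph{skew} scroll of bidegree $(2,2)$, a degree-$4$ surface swept by pairwise disjoint lines. Such a surface is necessarily singular (a smooth quartic is a K3 and contains only finitely many lines), but its singularities do not arise from two rulings crossing; they arise because the parameterization $\P(\L_1\oplus\L_2)\to\P^3$ fails to be an immersion --- in that example precisely because $T_{\iota_1(t),C_1}\cap T_{\iota_2(t),C_2}\ne\emptyset$ for all $t$. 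So ``a singular point must lie on two distinct rulings'' is the false step, and your chain smooth $\Rightarrow$ degree $\le 2$ never gets off the ground. Your alternative intersection-theoretic route is, as you yourself observe, also a dead end: the numerical identity of Lemma~\ref{CP3} is satisfied for every $d^\vee\ge 2$ with $m=d^\vee-2$.

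What the paper actually does after establishing $C\cong\P^1$ is an explicit argument that genuinely uses the genericity hypothesis (which your step (iii) never invokes, and which the example shows cannot be dropped): lift $\iota_1,\iota_2$ to vector-valued maps and consider the $4\times 4$ determinant $\det M$ with $M=(\iota_1(t),\iota_2(t),\iota_1(s),\iota_2(s))$, a bihomogeneous form of bidegree $(d_1+d_2,\,d_1+d_2)$ in $(t,s)$. Skewness says $\det M=0$ exactly when $t=s$, and for a generic pair $(\iota_1,\iota_2)$ this forces, up to $\PGL$ action, $\iota_1(t_0:t_1)=(t_0:t_1:0:0)$ and $\iota_2(t_0:t_1)=(0:0:t_0:t_1)$, i.e.\ bidegree $(1,1)$. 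To repair your write-up you should replace step (iii) with an argument of this kind; any argument that does not use genericity is doomed by the bidegree-$(2,2)$ example.
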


    \begin{proof}
        From Lemma \ref{CP3} and Lemma \ref{scroll} we immediately get that $S$ must be over $\P^1$.
        
        Treat the embeddings $\iota_i:\P^1\hookto\P^3$ as $4\times1$ column vector value functions. The skewness of the scroll $S$ implies that, for $t,s\in\P^1$, the matrix $M=(\iota_1(t),\iota_2(t),\iota_1(s),\iota_2(s))$ degenerate if and only if $t=s$. Since we have assumed that $S$ is generic, the only solution to $\det M=0$ is $t=s$ implies that, up to $\PGL$ actions, $\iota_i$'s are of the forms $\iota_1(t_0:t_1):=(t_0:t_1:0:0)$ and $\iota_2(t_0:t_1):=(0:0:t_0:t_1)$, thus $S$ must be a rational normal scroll.
    \end{proof}

    The following example shows that the generic condition in the above theorem is necessary.

    \begin{eg}
        Let $\iota_i:\P^1\hookto\P^3$ be defined by
        \begin{align*}
            \iota_1(t_0:t_1):=\begin{pmatrix}
                t_0^2\\2t_0t_1\\t_1^2\\0
            \end{pmatrix},\qquad\iota_2(t_0:t_1):=\begin{pmatrix}
                2t_0t_1\\t_1^2\\0\\t_0^2
            \end{pmatrix},
        \end{align*}
        then $\iota_1,\iota_2$ form a skew scroll. The skewness of the scroll can be verified by straightforward computation.
    \end{eg}

    Note that, in the above example, if we set $C_i:=\iota_i(\P^1)$, write $P_i$ for the plane that contains $C_i$ and $L:=P_1\cap P_2$, then both $C_1$ and $C_2$ tangent to $L$ at $\iota_1(1:0)$ and $\iota_2(0:1)$ respectively. The line $\<\iota_1(1:0),\iota_1(0:1)\>$ tangent to $C_1$ at $\iota_1(0:1)$ and $\<\iota_2(0:1),\iota_2(1:0)\>$ tangent to $C_2$ at $\iota_2(1:0)$. Furthermore, the tangent lines $T_{\iota_1(t),C_1}\cap T_{\iota_2(t),C_2}\neq\emptyset$ for all $t\in\P^1$. In fact, up to $\PGL$ actions, the example above is the only skew scroll of bidegree $(2,2)$. It is unknown to the author if there are skew scrolls of higher bidegrees.

\subsection{The skew curve in $\P^3$}\hfill\\
    Lemma \ref{CP3} also helps us to find the algebraically skew curve in $\P^3$.

    \begin{thm}\label{P3}
        The twisted cubic is the only algebraically skew curve in $\P^3$.
    \end{thm}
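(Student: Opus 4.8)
The plan is to run the machinery of Lemma \ref{CP3}, Lemma \ref{tilD1}, and Lemma \ref{tilgam} for the Gauss image of a curve $X\subset\P^3$ and extract the numerical constraints that force $X$ to be the twisted cubic. First I would note that if $X\subset\P^3$ is algebraically skew then in particular the embedded tangent lines are pairwise skew and the Gauss map $\gamma:X\to\GGr(1,3)$ is an embedding (the nowhere-degeneracy of the second osculating plane makes $\gamma$ injective and immersive). Set $C:=\gamma(X)$, so $[C]=d^\vee\sigma_{21}$ with $d^\vee=2d+2g-2$ by Lemma \ref{dvee}. Since $X$ is a curve in $\P^3$ the tangent lines genuinely form a one-parameter family, and algebraic skewness says exactly that $\tilde D_1\cap\tilde\Gamma$ is supported on $E\cap\tilde\Gamma=\P(\T_C)$ (the osculating condition rules out any further tangency on the exceptional divisor). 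Hence $[\tilde D_1][\tilde\Gamma]=m[E\cap\tilde\Gamma]$ for some integer $m\ge0$, which is precisely the hypothesis used in the proof of Lemma \ref{CP3}.

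Next I would invoke the computation already carried out in Lemma \ref{CP3}: comparing the coefficients of $j_*(\bar\sigma_{21}\zeta^3)$ and $j_*(\bar\sigma_{22}\zeta^2)$ in equations (\ref{gamgeo3'}) and (\ref{tilinter}) gives $m=d^\vee-2$ and $2d^\vee g=0$. Since $d^\vee=2d+2g-2>0$ for a nondegenerate curve in $\P^3$ (indeed $d\ge3$ forces $d^\vee\ge4$, and even $d=1,2$ are impossible for a genuine nondegenerate space curve), we conclude $g=0$. So $X$ is a rational curve, $C\cong\P^1$, and $d^\vee=2d-2$.

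Now the remaining task is to pin down the degree $d$. With $g=0$ we have $m=d^\vee-2=2d-4\ge0$, so $d\ge2$; but a plane conic is degenerate, so $d\ge3$. To push further I would use the multiplicity $m$ more carefully, or alternatively observe that skewness requires $\tilde D_1\cap\tilde\Gamma$ to have no component off $E$ at all, i.e. the actual set-theoretic intersection in $B\setminus E$ is empty, so the entire intersection cycle $[\tilde D_1][\tilde\Gamma]$ must be the pushforward of a genuine effective cycle supported on $E\cap\tilde\Gamma\cong\P(\T_C)\cong\P^1$. One can then interpret $m=d^\vee-2=2d-4$ as the ``excess multiplicity'' of this degenerate intersection and argue that it exceeds the geometrically allowed value unless $d=3$. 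Concretely: the genuine order of tangency of $\tilde D_1$ and $\tilde\Gamma$ along the one-dimensional locus $\P(\T_C)$ is bounded (for a one-parameter family of lines in $\P^3$ the tangent developable meets a nearby tangent line with a fixed local multiplicity, giving $m=1$ for the honestly skew case as in the twisted cubic), while $m=2d-4$ grows with $d$; so $d=3$ is forced, and then Lemma \ref{racurve} confirms the twisted cubic is indeed algebraically skew. The main obstacle I anticipate is making this last step rigorous — distinguishing ``the cycle $[\tilde D_1][\tilde\Gamma]$ is supported on $\P(\T_C)$'' (which only constrains $g$) from the finer statement that forces $d=3$; one likely needs either a direct local analysis of the tangency of $\tilde D_1$ and $\tilde\Gamma$ along $\tilde\Delta_\Gamma$, or a second blowup (as the author does in the $\P^4$ case in Section \ref{four}) to separate the pieces and read off that the only consistent solution is the rational normal cubic.
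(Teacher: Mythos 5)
Your first half is exactly the paper's argument: apply Lemma \ref{CP3} to the Gauss image $C=\gamma(X)$ to get $2d^\vee g=0$ and hence $g=0$. That part is fine (modulo the small point, which the paper does address, of reducing to $X$ irreducible: two components would have intersecting tangent surfaces).

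The gap is in your second half, where you try to pin down $d=3$ by bounding the intersection multiplicity $m=d^\vee-2=2d-4$ along $\tilde\Delta_\Gamma$. Two problems. First, your calibration is wrong: for the twisted cubic $d^\vee=2\cdot3+0-2=4$, so $m=d^\vee-2=2$, not $1$; the ``honestly skew'' case already has multiplicity $2$ (this is the same order-two tangency of $\tilde D_1$ and $\tilde\Gamma$ along the diagonal that the paper isolates in Lemma \ref{theC} for $\P^4$). Second, and more fundamentally, the relation $[\tilde D_1][\tilde\Gamma]=m[E\cap\tilde\Gamma]$ with $m=d^\vee-2$ is \emph{consistent} for every $d\ge 3$ once $g=0$; the intersection-theoretic bookkeeping in $B$ alone cannot distinguish a genuine excess multiplicity along the diagonal from a stray intersection component off the diagonal that happens to push forward to a multiple of the same class, and there is no a priori upper bound on the tangency order of the kind you invoke without doing the local analysis (or a second blowup, as in Section \ref{four}). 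So as written the degree is not forced. The paper closes this differently and more cheaply: once $g=0$, write $X$ as $(f_0(s,t):\cdots:f_3(s,t))$ with the $f_i$ homogeneous of degree $d$ and check directly (an elementary exercise with the $4\times4$ determinant built from the parameterization and its derivative at two points) that pairwise-skew tangent lines force $d=3$ and the curve to be projectively equivalent to the twisted cubic, whose skewness is Lemma \ref{racurve}. You should either carry out that direct computation or set up and compute the second blowup; the multiplicity heuristic as stated does not close the argument.
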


    \begin{proof}
        Let $X\subset\P^3$ be a degree $d$ genus $g$ smooth spatial curve. We may assume that the curve $X$ is connected and irreducible. Indeed, if a skew spacial curve $X$ has two distinct components $X_0$ and $X_1$, then their tangent varieties $T_{X_0}$ and $T_{X_1}$ must intersect as they are both surfaces, which contradicts to the definition of skewness.

        By applying Lemma \ref{CP3} to the Gauss image of $X$, we see that $X$ being skew implies that $g=0$. Since $\P^1$ can be parameterized by homogeneous coordinates, it is then an elementary exercise to show that a genus zero spacial skew curve must be the twisted cubic. 
    \end{proof}

    \begin{rmk}Theorem \ref{P3} can be proved in several different manners, and one may remove assumption of the Gauss map being diffeomorphic in this case. The readers might consider some of these proofs simpler than the one presented above. For instance, one can project the curve to $\P^1$, then applies the Hurwitz formula to this projection. The same result also follows from a more general statement about Terracini loci of curves in $\P^3$ \cite{TerrCurve}. We keep the proof above instead of other simpler ones because the presented approach illuminates what to do in higher dimensional cases.
    \end{rmk}

\section{The skew curves in $\P^4$}\label{four}

    Now we move on to one-parameter family of lines and skew curves in $\P^4$.

\subsection{Self-intersection of one-parameter family of lines in $\P^4$}\hfill\\
    We first consider the general case of a smooth curve $C\subset G:=\GGr(1,4)$. We would like to know under what conditions two distinct lines in the family do not intersect, and if they do intersect, how many pairs of lines intersect.
    
    Set $\Gamma:=C\times C$, then $[\Gamma]\in\A^{10}( G\times G)$. Thanks to Proposition \ref{bipro}, we have $[D_1]\in\A^2( G\times G)$ and the expected dimension of the intersection $\Gamma\cap D_1$ is zero. However, we know that the diagonal $\Delta_\Gamma$ of $\Gamma$ is a curve lying in the intersection $\Gamma\cap D_1$, so we need to resolve the excess intersection on the diagonal.

    Following the approach from previous sections, we consider the proper transforms $\tilde D_1$ and $\tilde\Gamma$ in $B:=\Bl_{\Delta_G}G\times G$. This approach works for a generic one-parameter family of lines in the following sense.

    \begin{lem}\label{gCP4}
        Consider a family of lines in $\P^4$ parameterized by a smooth curve $C\subset G:=\GGr(1,4)$. Suppose that for a generic point $x\in C$ with a local affine parameterization
        \begin{align*}
            x=\begin{pmatrix}
                1&0\\0&1\\f&g
            \end{pmatrix}
        \end{align*}
        of $C$ around $x$, the column vectors
        \begin{align*}
            \begin{pmatrix}
                1\\0\\f
            \end{pmatrix},
            \begin{pmatrix}
                0\\1\\g
            \end{pmatrix},
            \begin{pmatrix}
                0\\0\\f'
            \end{pmatrix},
            \begin{pmatrix}
                0\\0\\g'
            \end{pmatrix}
        \end{align*}
        are linearly independent, where $f$ and $g$ are $3\times1$ column vectors. Then $E\cap\tilde D_1\cap\tilde\Gamma$ is not a curve, where $E$ is the exceptional subvariety of the blowup $B:=\Bl_{\Delta_G}G\times G$.
    \end{lem}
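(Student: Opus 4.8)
The plan is to work on the exceptional divisor $E \cong \P(\T_G)$ with $G = \GGr(1,4)$, and to show that under the stated genericity hypothesis the triple intersection $E \cap \tilde D_1 \cap \tilde\Gamma$ is not a curve, i.e. has dimension at most zero (in fact I expect it to be empty). Recall from the lemma preceding this statement that $\tilde\Gamma \cap E \cong \P(\T_C)$, which is a curve, and from the analysis of $[\tilde D_1]$ in Lemma \ref{tilD1} that $\tilde D_1 \cap E$ is the degeneracy locus $\{(x,[v]) : \rank v \le n-1\} = \{(x,[v]) : \rank v \le 1\}$ inside $E \cong \P(\Hom(\S,\Q))$ (here $n=2$). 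So the task reduces to intersecting, inside $E$, the subvariety $\P(\T_C)$ with the rank-$\le 1$ locus of $\P(\Hom(\S,\Q))$, and showing this intersection is at most finite.

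First I would set up the local picture. Near a generic point $x \in C$, write the line as the column span of $\left(\begin{smallmatrix} 1&0\\0&1\\f&g\end{smallmatrix}\right)$ as in the hypothesis, so that a tangent vector to $G$ at $x$ is a $3\times 2$ matrix (an element of $\Hom(\S_x,\Q_x)$ after trivializing), and the tangent direction to $C$ at $x$ corresponds to the matrix $\left(\begin{smallmatrix} f'& g'\end{smallmatrix}\right)$ — i.e. the image of $\frac{\d}{\d t}\left(\begin{smallmatrix} 1&0\\0&1\\f&g\end{smallmatrix}\right) = \left(\begin{smallmatrix} 0&0\\0&0\\f'&g'\end{smallmatrix}\right)$ modulo the column span. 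The point $[x,[v]] \in \P(\T_C) \subset E$ therefore has $v$ represented by $\left(\begin{smallmatrix} f'& g'\end{smallmatrix}\right)$, a $3\times 2$ matrix whose two columns are $f'$ and $g'$. Second, I would observe that $[x,[v]]$ lies in $\tilde D_1 \cap E$ precisely when this matrix has rank $\le 1$, i.e. when $f'$ and $g'$ are linearly dependent $3$-vectors. But the hypothesis of the lemma says exactly that $\left(\begin{smallmatrix}1\\0\\f\end{smallmatrix}\right), \left(\begin{smallmatrix}0\\1\\g\end{smallmatrix}\right), \left(\begin{smallmatrix}0\\0\\f'\end{smallmatrix}\right), \left(\begin{smallmatrix}0\\0\\g'\end{smallmatrix}\right)$ are linearly independent, which forces the two $3$-vectors $f'$ and $g'$ to be linearly independent (any dependence among them would give a dependence among the four vectors, since the first two vectors are independent of the last two by the block structure). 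Hence the matrix $\left(\begin{smallmatrix} f'& g'\end{smallmatrix}\right)$ has rank exactly $2$ at a generic point of $C$, so $[x,[v]] \notin \tilde D_1$ for generic $x \in C$.

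Third, since $\P(\T_C)$ is an irreducible curve and the condition $\rank\left(\begin{smallmatrix} f'& g'\end{smallmatrix}\right) \le 1$ is a closed condition failing at a generic point, it holds on at most a finite subset of $\P(\T_C)$. Therefore $E \cap \tilde D_1 \cap \tilde\Gamma = (\tilde D_1 \cap E) \cap \P(\T_C)$ is finite, in particular not a curve. I would phrase the genericity carefully: "generic $x \in C$" in the hypothesis yields that the bad locus on $C$ is a proper closed subset, hence finite (as $C$ is a curve), and pulling back along $\P(\T_C) \to C$ (a fiber bundle with $0$-dimensional fibers, since $\T_C$ is a line bundle) keeps it finite.

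The main obstacle I anticipate is purely bookkeeping: making the identification $\tilde\Gamma \cap E \cong \P(\T_C) \hookrightarrow \P(\T_G) \cong E$ explicit enough in the chosen local affine coordinates that the tangent direction to $C$ really is represented by the matrix $\left(\begin{smallmatrix} f'& g'\end{smallmatrix}\right)$ modulo the column space — this requires chasing through the canonical isomorphism $\T_G \cong \Hom(\S,\Q)$ and the isomorphism $\iota^*\N_{\Delta_G, G\times G} \cong \T_G$ of Lemma \ref{nortan}. Once that dictionary is pinned down, the rank computation and the deduction of linear independence of $f'$ and $g'$ from the hypothesis are immediate, and the conclusion that the triple intersection is $0$-dimensional follows at once.
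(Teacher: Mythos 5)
Your proof is correct. The paper in fact states Lemma \ref{gCP4} without giving a proof, so there is nothing to compare against; but your route is clearly the intended one, since the hypothesis on the four column vectors is exactly the condition that the $3\times2$ matrix with columns $f'$ and $g'$ has rank two, i.e.\ that the tangent direction to $C$ at a generic point is a rank-two element of $\Hom(\S,\Q)$, and combining the paper's identifications $\tilde\Gamma\cap E\cong\P(\T_C)$ and $\tilde D_1\cap E=\{(x,[v]):\rank v\le 1\}$ reduces the claim to the finiteness of the rank-drop locus on the curve $\P(\T_C)\cong C$, exactly as you argue.
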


\subsection{Self-intersection of scrolls in $\P^4$}\label{scroll4}\hfill\\
    An immediate application of Lemma \ref{gCP4} is the self-intersection of generic scrolls in $\P^4$. 

    \begin{thm}
        Let $S$ be a generic bidegree $(d_1,d_2)$ scroll over a genus $g$ curve $C$ defined as in previous sections. Set $d^\vee:=d_1+d_2$. Then $S$ is skew if $(d^\vee)^2-(5d^\vee+6g-6)=0$. In particular, if a skew generic scroll $S$ with respect to $\iota_1,\iota_2:C\to\P^4$ satisfies $T_{\iota_1(t),\iota_1(C)}\cap T_{\iota_2(t),\iota_2(C)}=\emptyset$ for all $t\in C$, then $S$ is a rational normal scroll.
    \end{thm}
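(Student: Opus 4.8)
\emph{Plan.} The idea is to run the Section \ref{three} machine one dimension higher, in $B:=\Bl_{\Delta_G}(G\times G)$ with $G:=\GGr(1,4)$. Let $C\subset G$ be the curve $t\mapsto\ell_t:=\<\iota_1(t),\iota_2(t)\>$ of generators of $S$; by Lemma \ref{scroll}, $[C]=d^\vee\sigma_{32}$ with $d^\vee=d_1+d_2$, so the proper transforms $\tilde D_1:=\tilde D_1(2,5)$ and $\tilde\Gamma:=\pi^{-1}_*(C\times C)$ carry the classes recorded in Lemma \ref{tilD1}(ii) and Lemma \ref{tilgam}(ii). Since $S$ is generic, the hypothesis of Lemma \ref{gCP4} holds, so $E\cap\tilde D_1\cap\tilde\Gamma$ is not a curve; as $\tilde D_1\cap\tilde\Gamma$ is cut out with the expected dimension $0$ away from $E$, the whole intersection $\tilde D_1\cap\tilde\Gamma$ is a finite scheme. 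Its points lying off $E$ correspond exactly to the pairs $(t,s)$ with $t\neq s$ and $\ell_t\cap\ell_s\neq\emptyset$, so $S$ is skew if and only if $\tilde D_1\cap\tilde\Gamma\subset E$; in particular $S$ is skew whenever $\tilde D_1\cap\tilde\Gamma=\emptyset$, equivalently (finite intersection, positive local multiplicities in characteristic zero) whenever $[\tilde D_1][\tilde\Gamma]=0$.

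The key computation is the evaluation of $[\tilde D_1][\tilde\Gamma]\in\A_0(B)$. Multiplying the classes of Lemmas \ref{tilD1}(ii) and \ref{tilgam}(ii) and reducing via the ring structure of Lemma \ref{blring}: the $\pi^*\cdot\pi^*$ term is $(d^\vee)^2\pi^*\big((\sigma_1\sigma_{32})\otimes(\sigma_1\sigma_{32})\big)=(d^\vee)^2\pi^*(\sigma_{33}\otimes\sigma_{33})$, contributing $(d^\vee)^2$; every mixed term $\pi^*(\cdot)\cdot j_*(\cdot)$ dies because the relevant Schubert products in $\A(\GGr(1,4))$ already live in codimension $>6$; and the $j_*(\cdot)\cdot j_*(\cdot)$ terms, evaluated through Lemma \ref{blring}(i), the push-forward $\pi_{E*}(\zeta^{5+k})=s_k(\T_G)$ on $E\cong\P(\T_G)$, and $c_1(\T_{\GGr(1,4)})=5\sigma_1$, contribute $-(5d^\vee+6g-6)$. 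Thus
\[
[\tilde D_1][\tilde\Gamma]=(d^\vee)^2-(5d^\vee+6g-6),
\]
and the first assertion is immediate: if $(d^\vee)^2-(5d^\vee+6g-6)=0$ then $[\tilde D_1][\tilde\Gamma]=0$, hence $\tilde D_1\cap\tilde\Gamma=\emptyset$, hence $S$ is skew.

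For the second assertion, suppose $S$ is skew and $T_{\iota_1(t),\iota_1(C)}\cap T_{\iota_2(t),\iota_2(C)}=\emptyset$ for all $t\in C$. Skewness gives $\tilde D_1\cap\tilde\Gamma\subset E$. Under the identification $E\cong\P(\T_G)\cong\P(\Hom(\S,\Q))$ of Lemma \ref{nortan}, $\tilde D_1\cap E$ is the rank-$\leq1$ locus (as in the proof of Lemma \ref{tilD1}), while $\tilde\Gamma\cap E\cong\P(\T_C)$ is, over each $t$, the single point given by the tangent direction of $C$ at $t$: writing $\ell_t=\sp(\tilde\iota_1(t),\tilde\iota_2(t))$, that direction sends $\tilde\iota_i(t)\mapsto\overline{\tilde\iota_i'(t)}$ in $\C^5/\ell_t$, so it has rank $2$ precisely when $\tilde\iota_1(t),\tilde\iota_2(t),\tilde\iota_1'(t),\tilde\iota_2'(t)$ are linearly independent, i.e. precisely when $T_{\iota_1(t),\iota_1(C)}\cap T_{\iota_2(t),\iota_2(C)}=\emptyset$. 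By hypothesis this holds everywhere, so $\tilde\Gamma\cap E$ is disjoint from $\tilde D_1\cap E$; together with $\tilde D_1\cap\tilde\Gamma\subset E$ this forces $\tilde D_1\cap\tilde\Gamma=\emptyset$, hence $[\tilde D_1][\tilde\Gamma]=0$, so $g=\tfrac16(d^\vee-2)(d^\vee-3)$.

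I would finish with a short Diophantine step. Each $\iota_i(C)$ is a smooth curve of genus $g$ and degree $d_i$, so Castelnuovo's bound gives $g\leq\binom{d_i-1}{2}$; for $g\geq1$ this forces $d_i\geq\tfrac12(3+\sqrt{8g+1})$, hence $d^\vee\geq3+\sqrt{8g+1}$, and feeding this into $g=\tfrac16(d^\vee-2)(d^\vee-3)$ (whose right side is increasing in $d^\vee$) yields, with $u:=\sqrt{8g+1}\geq3$, the impossible inequality $3(u^2-1)\geq4(u^2+u)$. So $g=0$, whence $(d^\vee-2)(d^\vee-3)=0$; a degree-$2$ scroll spans only a $\P^3$, so nondegeneracy in $\P^4$ leaves $d^\vee=3$ and $(d_1,d_2)=(1,2)$, making $S$ the cubic scroll, i.e. the rational normal scroll in $\P^4$. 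The main obstacle I foresee is the sign-bookkeeping in $\A(B)$ — carrying the sign of Lemma \ref{blring}(i) and of the Segre push-forward from $E$ through correctly — and, conceptually, the identification in the third paragraph that turns the geometric condition $T_{\iota_1(t)}\cap T_{\iota_2(t)}=\emptyset$ into a rank condition on the tangent of $C$.
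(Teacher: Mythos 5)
Your proposal follows the paper's proof essentially verbatim: the same blowup $B=\Bl_{\Delta_G}(G\times G)$ with $G=\GGr(1,4)$, the same classes from Lemma \ref{tilD1}(ii) and Lemma \ref{tilgam}(ii), the same product $\left((d^\vee)^2-(5d^\vee+6g-6)\right)P$, and the same appeal to Lemma \ref{gCP4} together with the rank interpretation of the tangent direction of $C$ to identify the contribution over the diagonal with the locus where $T_{\iota_1(t),\iota_1(C)}$ meets $T_{\iota_2(t),\iota_2(C)}$. Your Castelnuovo-style endgame is actually more careful than the paper's, which asserts that the only integer solutions of $6g=(d^\vee-2)(d^\vee-3)$ with $d^\vee\ge2$ are $(g,d^\vee)\in\{(0,2),(0,3),(1,5)\}$ and thereby overlooks, e.g., $(g,d^\vee)=(2,6)$ --- a family of cases that your degree bound for embeddings of genus-$g$ curves disposes of uniformly.
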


    \begin{proof}
        Let $\iota_1,\iota_2:C\hookto\P^4$ be the embeddings that define the scroll and $\gamma:C\to G:=\GGr(1,4)$ be given by $x\mapsto\<\iota_1(x),\iota_2(x)\>$ as before. Then, Lemma \ref{scroll} gives us $[\gamma(C)]=d^\vee\sigma_{32}$, and so, by Lemma \ref{tilgam}, we have 
        $$[\tilde\Gamma]=(d^\vee)^2\pi^*(\sigma_{32}\otimes\sigma_{32})- j_*\left(d^\vee\bar\sigma_{32}\zeta^4+(5d^\vee+2g-2)\bar\sigma_{33}\zeta^3\right).$$
        Meanwhile, from Lemma \ref{tilD1} we also have
        $$[\tilde{D_1}(2,5)]=\pi^*(\sigma_2\otimes\sigma_0+\sigma_1\otimes\sigma_1+\sigma_0\otimes\sigma_2)-j_*(5\bar\sigma_1+3\zeta).$$
        Due to the genericity assumption of $S$, we can apply Lemma \ref{gCP4} to see that the intersection $\tilde D_1\cap\tilde\Gamma$ consists of pairs of intersecting lines in $S$ as well as the points $t\in C$ such that $T_{\iota_1(t),\iota_1(C)}\cap T_{\iota_2(t),\iota_2(C)}\ne\emptyset$. Thus the theorem follows from
        \begin{align*}
            \ll\tilde D_1\rr\ll\tilde\Gamma\rr=&\ll(d^\vee)^2\pi^*(\sigma_{33}\otimes\sigma_{33})-j_*\left(5d^\vee\bar\sigma_{33}\zeta^5+3d^\vee\bar\sigma_{32}\zeta^6+3(5d^\vee+2g-2)\bar\sigma_{33}\zeta^5\right)\rr\\
            =&\left((d^\vee)^2-(5d^\vee+6g-6)\right)P,
        \end{align*}
        where $P$ is the class of a point. Indeed, by solving $(d^\vee)^2-(5d^\vee+6g-6)=0$ for integers $g\ge0$ and $d^\vee:=d_1+d_2\ge2$, we either have $g=0$ with $d^\vee=2,3$ or $g=1$ with $d^\vee=5$. Since there is no elliptic curves of degree $\le2$ in $\P^4$, the case of $g=1$ is impossible. Thus we conclude that the only skew generic scrolls in $\P^4$ are the rational normal scrolls.
    \end{proof}

    We would like this approach to work for the case of skew curves in $\P^4$. However, it turns out that the family of tangent lines of a curve in $\P^4$ does not meet the conditions of Lemma \ref{gCP4}. Thus we need a more careful study of the excess intersection on the diagonal.

\subsection{The excess intersection}\label{ExcInt}\hfill\\
    Given a curve $X\subset\P^4$. Let $\gamma:X\to G:=\GGr(1,4)$ be its Gauss map and $\Gamma:=\gamma(X)\times\gamma(X)\subset G\times G$. We would like to determine whether $X$ is skew by taking the intersection of the proper transforms $\tilde D_1$ and $\tilde\Gamma$ in the blowup space $B:=\Bl_{\Delta_G}(G\times G)$. The problem is that the intersection $\tilde\Gamma\cap\tilde D_1$ might not resolve the excess intersection on the diagonal. In fact, we will see in the following lemma that the intersection $\tilde\Gamma\cap\tilde D_1$ contains a double curve supported by $\tilde\Gamma\cap E$.

    \begin{lem}\label{theC}
        Let $X\subset\P^4$ be a smooth curve with third osculating planes nowhere degenerated. Let $G:=\Gr(2,5)$, $\Gamma:=\Gamma(X)$ and $D_1:=D_1(2,5)$ be defined as in the previous sections, $E$ be the exceptional locus of the blowup $B:=\Bl_{\Delta_G}(G\times G)$, and $\tilde\Gamma$ and $\tilde D_1$ be the proper transforms of $\Gamma$ and $D_1$ in $B$ respectively. Then, the curve $\tilde\Delta_\Gamma:=\tilde\Gamma\cap E$ is the diagonal of $\tilde\Gamma\cong\Gamma:=\gamma(X)\times\gamma(X)$ and supports a double curve component of the intersection $\tilde\Gamma\cap\tilde D_1$. More precisely, if we consider the blowup
        \begin{align}\label{BlowUp}
            \xymatrixcolsep{2pc}
            \xymatrix{
            E^\dagger\ar@{^{(}->}[r]^-{j^\dagger}\ar[d]_{\pi^\dagger_E}
            &B^\dagger:=\Bl_{\tilde\Delta_\Gamma}B\ar[d]^{\pi^\dagger}\\
            \tilde\Delta_\Gamma\ar@{^{(}->}[r]^{i^\dagger}&B,
            }
        \end{align}    
        then the intersection of the proper transforms $D_1^\dagger$ and $\Gamma^\dagger$ of $\tilde D_1$ and $\tilde\Gamma$ in $B^\dagger$ has a reduced irreducible curve component at $\Delta^\dagger_\Gamma:=E^\dagger\cap\Gamma^\dagger$, where $E^\dagger$ is the exceptional divisor of the blowup $B^\dagger$.
    \end{lem}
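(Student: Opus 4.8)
The strategy is to reduce everything to an explicit computation in local coordinates at a generic point of the diagonal, using the moving frame of the Gauss map of $X$. First I would pin down $\tilde\Delta_\Gamma$ and check that it lies in $\tilde D_1$. Since $C:=\gamma(X)$ is a curve, the identification $\tilde\Gamma\cap E\cong\P(\T_C)$ from Section~\ref{three} gives $\tilde\Gamma\cap E\cong C$; and since blowing up the Cartier divisor $\Gamma\cap\Delta_G$ in the smooth surface $\Gamma$ does nothing, $\tilde\Gamma\cong\Gamma$ and $\tilde\Gamma\cap E$ is the diagonal, which we call $\tilde\Delta_\Gamma$. Under the identification $E\cong\P(\T_G)=\P(\Hom(\S,\Q))$ of Lemma~\ref{nortan}, the proof of Lemma~\ref{tilD1} identifies $\tilde D_1\cap E$ with $\{(x,[v]):\rank v\le1\}$. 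If $L\subset\C^5$ is the $2$-plane over $\gamma(x)$ and $f$ is a local lift of $X$, then the tangent direction of $C$ at $\gamma(x)$, viewed in $\T_{\gamma(x)}G=\Hom(L,\C^5/L)$, is the homomorphism sending $f(x)\mapsto 0$ and $f'(x)\mapsto f''(x)\bmod L$; by the nondegeneracy of the third osculating plane $f''(x)\notin L$, so it has rank exactly $1$. Hence $\tilde\Delta_\Gamma\subset\tilde D_1\cap E$, so $\tilde\Delta_\Gamma\subseteq\tilde\Gamma\cap\tilde D_1$.

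Next I would set up the local model. Fix a generic $x_0\in X$ and use the product of the standard affine charts of $G=\Gr(2,5)$, in which a $2$-plane is recorded by a $3\times2$ matrix and $\gamma(x_0)$ is the origin; replacing the second factor's matrix $B$ by $C:=B-A$, one has $\Delta_G=\{C=0\}$ and $D_1=\{\rank C\le1\}=V(m_{12},m_{13},m_{23})$ with $m_{ij}$ the $2\times2$ minors of $C$. Parameterizing $\Gamma$ by $(s,t)\mapsto(A(s),A(t)-A(s))$, on $\Gamma$ we have $C=A(t)-A(s)=u\cdot g(s,u)$ with $u:=t-s$ and $g(s,0)=A'(s)$, which has rank $\le1$ by the first step. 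In the blowup chart adapted to a nonzero entry of $A'(x_0)$, the variety $\tilde D_1$ is smooth and cut out by two of the dehomogenised minors, whose restrictions to $\tilde\Gamma$ (with coordinates $(s,u)$ and $\tilde\Delta_\Gamma=\{u=0\}$) are unit multiples of two of the $2\times2$ minors of $g(s,u)$. These vanish identically along $\{u=0\}$ because $g(s,0)=A'(s)$ has rank $\le1$, and the crucial point is that they vanish to order exactly $2$ in $u$. I would verify this by a moving-frame computation: normalising the lift so that $f''(x_0)$ represents $\overline{f''}$ in $\C^5/L$, one gets $A'(x_0)\colon f\mapsto 0,\ f'\mapsto\overline{f''}$, then $A''(x_0)\colon f\mapsto-\overline{f''},\ f'\mapsto\overline{f'''}$, and $A'''(x_0)\colon f\mapsto-2\overline{f'''}$; since $g(x_0,u)=A'(x_0)+\tfrac{u}{2}A''(x_0)+\tfrac{u^2}{6}A'''(x_0)+O(u^3)$ and $\overline{f''},\overline{f'''}$ are linearly independent in $\C^5/L$ (this is where the third-osculating nondegeneracy enters), the relevant $2\times2$ minor of $g(x_0,u)$ works out to $\tfrac{1}{12}u^2+O(u^3)$ and the other is divisible by $u^2$. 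Therefore the ideal of $\tilde D_1\cap\tilde\Gamma$ in $\tilde\Gamma$ equals $(u^2)$ near a generic point of $\tilde\Delta_\Gamma$: the reduced curve $\tilde\Delta_\Gamma$ occurs with multiplicity two, the asserted double curve component, and in particular the tangent space to $\tilde\Gamma$ is contained in that to $\tilde D_1$ at every point of $\tilde\Delta_\Gamma$.

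Finally I would push this through the second blowup. As $\tilde\Delta_\Gamma$ is a Cartier divisor in the smooth surface $\tilde\Gamma$, one gets $\Gamma^\dagger\cong\tilde\Gamma$ and $\Delta^\dagger_\Gamma=\Gamma^\dagger\cap E^\dagger\cong\tilde\Delta_\Gamma\cong C$, a smooth irreducible curve. Because the tangent space to $\tilde\Gamma$ is contained in that to $\tilde D_1$ along $\tilde\Delta_\Gamma$, one can pick local coordinates $(s,u,z_1,\dots,z_{10})$ on $B$ with $\tilde\Gamma=\{z_\bullet=0\}$, $\tilde\Delta_\Gamma=\{u=z_\bullet=0\}$ and $\tilde D_1=\{z_9=\phi_9,\ z_{10}=\phi_{10}\}$, where $\phi_9|_{z=0}=u^2\cdot(\text{unit})$ and $\phi_{10}|_{z=0}\in(u^2)$ by the previous step. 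In the chart $z_i=u\zeta_i$ of $B^\dagger=\Bl_{\tilde\Delta_\Gamma}B$ (so $E^\dagger=\{u=0\}$ and $\Gamma^\dagger=\{\zeta_\bullet=0\}$) the proper transform becomes $D_1^\dagger=\{\zeta_9=u\cdot(\text{unit})+\sum_{i\le8}\zeta_i(\cdots),\ \zeta_{10}=\cdots\}$, so $D_1^\dagger\cap\Gamma^\dagger$ has local ideal $(u)$ in $\Gamma^\dagger$; it is reduced and coincides with $\Delta^\dagger_\Gamma$ near a generic point of $\Delta^\dagger_\Gamma$. Hence $\Delta^\dagger_\Gamma$ is a reduced irreducible curve component of $D_1^\dagger\cap\Gamma^\dagger$, as claimed.

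The heart of the argument — and the expected main obstacle — is the ``order exactly two'' claim in the second paragraph: it forces one to compute the first three derivatives of the Gauss map (equivalently the $4$-jet of $X$) with a moving frame, to normalise the lift so the bookkeeping stays manageable, and to match the outcome with the local equations of the rank-$\le1$ locus inside the blowup, all while checking that the conclusion is uniform along the diagonal. Organising the coordinates on the iterated blowup $B^\dagger$ so that both proper transforms appear as graphs is also somewhat technical, though routine once the local structure of $\tilde D_1\cap\tilde\Gamma$ is in hand.
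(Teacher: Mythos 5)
Your proposal is correct, and its computational core is the same as the paper's: everything comes down to Taylor-expanding the Gauss lift to third order at a point of the diagonal and using that $f''(0)$ and $f'''(0)$ are independent modulo the osculating $2$-plane (the nowhere-degenerate third osculating plane hypothesis). Your expansion $g(x_0,u)=A'+\tfrac{u}{2}A''+\tfrac{u^2}{6}A'''+O(u^3)$ with $A'(0)=(0,\bar f'')$, $A''(0)=(-\bar f'',\bar f''')$, $A'''(0)=(-2\bar f''',\cdot)$ matches the paper's local parametrization $\gamma(f(t))=(f(t)-tf'(t),f'(t))$, and your leading coefficient $\tfrac{1}{12}u^2\,\bar f''\wedge\bar f'''$ is exactly the quantity whose nonvanishing the paper needs.

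Where you genuinely differ is in how the order of contact is extracted. The paper probes the intersection with test curves: it exhibits $\alpha(t)=\gamma(f(t))$ in the slice $\{\gamma(x_0)\}\times\gamma(X)$ and $\beta(t)=(-\tfrac{t}{2}f'(t),f'(t))$ in $(\{\gamma(x_0)\}\times G)\cap D_1$ agreeing to second order (giving the containment $\Delta^\dagger_\Gamma\subset D_1^\dagger\cap\Gamma^\dagger$), and then shows no rank-one curve $\rho(t)=(a(t)g(t),g(t))$ can agree with $\alpha$ to third order (giving non-tangency of the proper transforms). You instead write down the local equations of $\tilde D_1$ — the dehomogenized $2\times2$ minors of $C=u\,g(s,u)$ — restrict them to $\tilde\Gamma$, and show they generate exactly $(u^2)$, then track this ideal through the second blowup to get $(u)$. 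Your version buys two things: it directly establishes the scheme structure, so the ``double curve component'' assertion (ideal $(u^2)$) is proved literally rather than inferred from an order-of-tangency statement about test curves lying in a single fiber $\{\gamma(x_0)\}\times G$; and it makes explicit, via the observation that the first-order term $A'_1\wedge A''_2+A''_1\wedge A'_2$ cancels identically in $s$, that the order-two contact is uniform along the whole diagonal rather than only at the chosen base point. The one place to be slightly careful is the graph presentation $\tilde D_1=\{z_9=\phi_9,\,z_{10}=\phi_{10}\}$: this uses the smoothness of $\tilde D_1$ (established separately in Section \ref{decomp} of the paper) and the inclusion $\T\tilde\Gamma\subset\T\tilde D_1$ along $\tilde\Delta_\Gamma$ to choose the complementary coordinates $z_9,z_{10}$ consistently in a neighborhood, but this is routine given your ideal computation. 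No gap.
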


    \begin{proof}
        Since $\Delta_\Gamma:=\Gamma\cap\Delta_G$ is the diagonal of $\Gamma:=\gamma(X)\times\gamma(X)$, we clearly have $\Gamma^\dagger\cong\tilde\Gamma\cong\Gamma$ and $\Delta_\gamma^\dagger\cong\tilde\Delta_\Gamma\cong\Delta_\Gamma$ are their diagonals and so irreducible curves.

        To prove that $\tilde\Delta_\Gamma\subset\tilde D_1$ and $\Delta_\Gamma^\dagger\subset D_1^\dagger$, it suffices, thanks to Lemma \ref{nortan}, to show that for any given point $x_0\in X$, there exists an open neighborhood $U\subset\C$ of $0$, and curves $\alpha,\beta:U\to\Gamma$ with $\alpha(0)=\beta(0)=\gamma(x_0)$, $\{\gamma(x_0)\}\times\alpha(U)\subset\{\gamma(x_0)\}\times\gamma(X)$, and $\{\gamma(x_0)\}\times\times\beta(U)\subset(\{\gamma(x_0)\}\times G)\cap D_1$ such that the first derivative and the second derivative of $\alpha$ and $\beta$ at $0$ agree. Indeed, $\frac{\d}{\d t}\alpha(0)=\frac{\d}{\d t}\beta(0)$ implies $\tilde\alpha(0)=\tilde\beta(0)$, where  $\tilde\alpha$ and $\tilde\beta$ are the proper transforms of $\alpha$ and $\beta$ in $B$, and $\frac{\d^2}{\d t^2}\alpha(0)=\frac{\d^2}{\d t^2}\beta(0)$ implies $\frac{\d}{\d t}\tilde\alpha(0)=\frac{\d}{\d t}\tilde\beta(0)$ and so $\alpha^\dagger(0)=\beta^\dagger(0)$, where $\alpha^\dagger$ and $\beta^\dagger$ are the proper transforms of $\tilde\alpha$ and $\tilde\beta$ in $B^\dagger$.
        
        Similarly, to prove that $\Gamma^\dagger$ and $D_1^\dagger$ do not tangent along $\Delta_\Gamma^\dagger$, we show that for any curve $\rho:U\to(\{\gamma(x_0)\}\times G)\cap D_1$ with $\rho(0)=(\gamma(x_0),\gamma(x_0))$, its third derivative does not agree with the third derivative of $\alpha$.

        To construct $\alpha$ and $\beta$, we first construct local coordinates around an arbitrary point on $\Delta_\Gamma$. To begin, fix any point $x_0\in X\subset\P^4$. By choosing the coordinates of $\P^4$, $X$ has a local analytic parametrization $U\to\P^4$ from some open neighborhood $U$ of $0\in\C$ given by $x_t:=(1:t:f_2(t):f_3(t):f_4(t))$ such that $f(0)=(1:0:f_2(0):f_3(0):f_4(0))=x_0$ and $f_i(0)=f_i'(0)=0$ for $i=2,3,4$. Then, in Stiefel coordinates, we have
        \begin{align*}
            \gamma(x_t)=(x_t,x_t')=\begin{pmatrix}
                1&0\\t&1\\f(t)&f'(t)
            \end{pmatrix}=\begin{pmatrix}
                1&0\\0&1\\f(t)-tf'(t)&f'(t)
            \end{pmatrix},
        \end{align*}
        where $f(t):=\begin{pmatrix}
            f_2(t)\\f_3(t)\\f_4(t)
        \end{pmatrix}$. In other words, in the affine coordinates of $\Gr(2,5)$ with the top minor matrix being the identity, we have $\gamma(f(t))=(f(t)-tf'(t),f'(t))$ and $\gamma(f(0))=O$ is the zero matrix.

        Set $\alpha(t):=\gamma(f(t))$ and $\beta(t):=(-\frac{t}{2}f'(t),f'(t))$, then straightforward computation shows that $\alpha^{(k)}(0)=\beta^{(k)}(0)$ for $k=0,1,2$. Clearly we have $\{\gamma(x_0)\}\times\alpha(U)\subset\{\gamma(x_0)\}\times\gamma(X)$, and $(O,\beta(t))\in D_1$ for any $t\in U$ because $\rank\beta(t)\le1$. This shows that $\Gamma^\dagger$ and $D_1^\dagger$ intersect along $\Delta_\Gamma^\dagger$.

        Now, suppose for contradiction that there exists $\rho:U\to(\{\gamma(x_0)\}\times G)\cap D_1$ with $\rho(0)=(\gamma(x_0),\gamma(x_0))$ such that $\alpha^{(k)}(0)=\rho^{(k)}(0)$ for $k=0,1,2,3$. Since $(\gamma(x_0),\rho(t))\in D_1$, $\rank\rho(t)\le1$ for any $t\in U$. Thus we may write
        \begin{align*}
            \rho(t):=(a(t)g(t),g(t))
        \end{align*}
        for some analytic function $a(t)$ and some $3\times1$ column vector $g(t)$ of analytic functions. Then, since $O=\alpha(0)=\rho(0)$, we have $g(0)=0$. Thus
        \begin{align*}
            (0,f''(0))=\alpha'(0)=\rho'(0)=(a(0)g'(0),g'(0)).
        \end{align*}
        Since we have assumed that the third osculating plane of $X$ is nowhere degenerated, $g'(0)=f''(0)\ne0$, so we must have $a(0)=0$. Hence
        \begin{align*}
            (-f''(0),f'''(0))=\alpha''(0)=\rho''(0)=(2a'(0)g'(0),g''(0)).
        \end{align*}
        In particular, plug $g'(0)=f''(0)$ and $g''(0)=f'''(0)$ into $\alpha'''(0)=\rho'''(0)$, we obtain
        \begin{align*}
            (-2f'''(0),f''''(0))=\alpha'''(0)=\rho'''(0)=(3h''(0)f''(0)+3h'(0)f'''(0),g'''(0)).
        \end{align*}
        This implies that $f'''(0)$ is a multiple of $f''(0)$, which contradicts to the assumption that the third osculating plane of $X$ at $x_0$ being nondegenerate as the first three derivatives of $f$ at $x_0$ are linearly dependent. This concludes the proof.
    \end{proof}

    Although the above lemma shows that the blowups we did are not enough to resolve the excess intersection component in $\Gamma\cap D_1$ along the diagonal, the blowups do smooth the variety $D_1$, see Section \ref{decomp}. In addition, since the excess intersection component between the proper transforms $D_1^\dagger$ and $\Gamma^\dagger$ is now a smooth reduced irreducible curve, we may attempt to apply the excess intersection formula.

    \begin{thm}[The symmetric excess intersection formula, cf. \cite{3264}]
        Let $B$ be a smooth projective variety, and $R,D\subset Y$ be two locally complete intersection subvarieties of $Y$, then, as a class in $\A(Y)$, we have
        $$[R][D]=\sum_{C}(\iota_C)_*(\epsilon_C),$$
        where the sum is taken over the connected components $C$ of $R\cap D$, $\iota_C:C\to Y$ denotes the inclusion, and
        $$\epsilon_C:=\{s(C,Y)c(\N_{R,Y}|_C)c(\N_{D,Y}|_C)\}_{\dim[R][D]}$$
        with $s(C,Y)$ being the Segre class of $C$ in $Y$, which, when $C$ is again a locally complete intersection in $Y$, is given by $c(\N_{C,Y})^{-1}$.
    \end{thm}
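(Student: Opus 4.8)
The plan is to deduce the formula from Fulton's refined intersection machinery, using only two inputs: that $Y$ is smooth, so the diagonal $\delta\colon Y\hookrightarrow Y\times Y$ is a regular embedding of codimension $\dim Y$ with normal bundle $\T_Y$; and that $R$ and $D$, being local complete intersections in $Y$, are regularly embedded with locally free conormal sheaves, hence genuine normal bundles $\N_{R,Y}$ and $\N_{D,Y}$. (I will write $Y$ throughout, matching the displayed formula.) First I would reduce the product to an intersection with the diagonal: writing $\delta^{!}$ for the refined Gysin homomorphism attached to $\delta$, one has by construction of the intersection product on a smooth variety that $[R][D]=\delta^{!}[R\times D]$ in $\A(Y)$, a class supported on $\delta^{-1}(R\times D)=R\cap D$, where throughout $R\cap D$ is given the fiber-product scheme structure $R\times_Y D$. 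Since $Y$ is projective there are no properness subtleties. The payoff of routing through $\delta$ is that the output will be manifestly symmetric in $R$ and $D$.

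Next I would invoke the excess intersection / key formula for refined Gysin maps (e.g. \cite[\S 6]{fulton}, or the corresponding statement in \cite{3264}) for the Cartesian square with $\delta$ on top and $R\times D\hookrightarrow Y\times Y$ on the bottom: since $\delta$ has normal bundle $\T_Y$, and its pullback to $R\cap D$ is $\T_Y|_{R\cap D}$,
\[
[R][D]=\delta^{!}[R\times D]=\Big\{\,c\big(\T_Y|_{R\cap D}\big)\cap s\big(R\cap D,\;R\times D\big)\,\Big\}_{\dim[R][D]},
\]
where $\dim[R][D]=\dim R+\dim D-\dim Y$. Because the Segre class of a scheme is additive over its connected components, $s(R\cap D,R\times D)=\sum_C s(C,R\times D)$, the sum ranging over the connected components $C$ of $R\cap D$, so the right-hand side becomes $\sum_C\big\{c(\T_Y|_C)\cap s(C,R\times D)\big\}_{\dim[R][D]}$, which I then push forward to $Y$ via $\iota_C$.

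The technical heart is to rewrite $s(C,R\times D)$ in terms of $s(C,Y)$ and the normal bundles. For this I would use the standard compatibility of Segre classes with a regular embedding $A\hookrightarrow M$ of normal bundle $\mathcal N$: for any closed subscheme $C\subseteq A$ the normal cone of $C$ in $M$ splits off the pullback of $\mathcal N$, so that $s(C,M)=c(\mathcal N|_C)^{-1}\cap s(C,A)$ in $\A_\ast(C)$. Applying this to the regular embedding $R\times D\hookrightarrow Y\times Y$, whose normal bundle restricts on $C$ to $\N_{R,Y}|_C\oplus\N_{D,Y}|_C$, and also to $\delta\colon Y\hookrightarrow Y\times Y$ with normal bundle $\T_Y$, and equating the two resulting expressions for $s(C,Y\times Y)$, yields
\[
s(C,R\times D)=c(\N_{R,Y}|_C)\,c(\N_{D,Y}|_C)\,c(\T_Y|_C)^{-1}\cap s(C,Y).
\]
Substituting this into the previous display, the factors $c(\T_Y|_C)$ and $c(\T_Y|_C)^{-1}$ cancel, leaving exactly
\[
[R][D]=\sum_C(\iota_C)_{\ast}\Big\{\,s(C,Y)\,c(\N_{R,Y}|_C)\,c(\N_{D,Y}|_C)\,\Big\}_{\dim[R][D]}=\sum_C(\iota_C)_{\ast}(\epsilon_C),
\]
which is symmetric in $R$ and $D$ by inspection. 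For the last sentence of the statement, when $C$ is itself a local complete intersection in $Y$ the embedding $C\hookrightarrow Y$ is regular, so $s(C,Y)=c(\N_{C,Y})^{-1}\cap[C]$, giving the stated simplified form of $s(C,Y)$.

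I expect the main obstacle to be the bookkeeping around scheme structures in the Segre-class step: one must consistently take $R\cap D$ to mean the fiber product $R\times_Y D$ so that embedded or non-reduced components are treated correctly, must invoke the cone-splitting $C_{C/M}\cong C_{C/A}\times_C(\mathcal N|_C)$ in the full generality of an arbitrary closed subscheme $C$ of a regularly embedded $A$ (and not merely the case where $C$ is reduced or itself regularly embedded), and must keep $\{\,\cdot\,\}_{\dim[R][D]}$ meaning the component of dimension $\dim R+\dim D-\dim Y$ everywhere. Once these points are pinned down, the rest is a formal consequence of the excess intersection formula for $\delta$ together with the multiplicativity $s(\mathcal C\times_X E)=c(E)^{-1}\cap s(\mathcal C)$ of Segre classes of cones under a vector-bundle factor.
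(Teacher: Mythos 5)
The paper itself offers no proof of this statement—it is quoted from \cite{3264}—so the only thing to assess is your argument on its own terms. Your overall strategy (reduce to the diagonal, apply the Gysin/excess formula for $\delta^{!}$, split the Segre class over connected components) is the right skeleton, but the step you yourself identify as the technical heart is based on a false lemma. The claim that for a regular embedding $A\hookrightarrow M$ with normal bundle $\mathcal N$ and an \emph{arbitrary} closed subscheme $C\subseteq A$ one has $C_{C/M}\cong C_{C/A}\times_C(\mathcal N|_C)$, hence $s(C,M)=c(\mathcal N|_C)^{-1}\cap s(C,A)$, fails as soon as $A$ is singular along $C$. Concretely, take $M=\P^2$, $A$ an irreducible nodal cubic (a Cartier divisor, hence regularly embedded), and $C$ the reduced node: then $s(C,A)=2[\mathrm{pt}]$ (the multiplicity of the node) while $s(C,M)=[\mathrm{pt}]$, and the two cones $C_{C/M}\cong\C^2$ (irreducible) and $C_{C/A}\times_C(\mathcal N|_C)$ (two planes) admit no closed embedding in either direction. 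Since in the theorem $R$ and $D$ are only assumed to be local complete intersections, $A=R\times D$ may be singular along $C$, so this is exactly the situation your argument must handle; the derived identity $s(C,R\times D)=c(\N_{R,Y}|_C)c(\N_{D,Y}|_C)c(\T_Y|_C)^{-1}\cap s(C,Y)$ is therefore unjustified (and false as an identity of full Segre classes), even though the truncation of $c(\T_Y|_C)\cap s(C,R\times D)$ in dimension $\dim[R][D]$ that you ultimately need does agree with the right-hand side.

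The repair is to avoid comparing Segre classes across different ambient schemes altogether. Writing $j=i_R\times i_D:R\times D\to Y\times Y$, one has $j^{!}[Y\times Y]=[R\times D]$ and $\delta^{!}[Y\times Y]=[\Delta_Y]$, so commutativity of refined Gysin homomorphisms (\cite[Theorem 6.4]{fulton}) gives
\begin{align*}
\delta^{!}[R\times D]=\delta^{!}j^{!}[Y\times Y]=j^{!}\delta^{!}[Y\times Y]=j^{!}[\Delta_Y]
\end{align*}
in $\A_*(R\cap D)$. Now apply \cite[Proposition 6.1(a)]{fulton} to the right-hand side: $j$ is regular with normal bundle $\N_{R,Y}\boxplus\N_{D,Y}$, the subvariety is $\Delta_Y\cong Y$, and $j^{-1}(\Delta_Y)=R\cap D$, so
\begin{align*}
j^{!}[\Delta_Y]=\bigl\{c(\N_{R,Y}|_{R\cap D})\,c(\N_{D,Y}|_{R\cap D})\cap s(R\cap D,Y)\bigr\}_{\dim[R][D]},
\end{align*}
which, after your (correct) decomposition of $s(R\cap D,Y)$ over connected components and pushforward along $\iota_C$, is exactly the asserted symmetric formula. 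Your steps (1)--(3) and the final remark about $s(C,Y)=c(\N_{C,Y})^{-1}\cap[C]$ when $C$ is itself regularly embedded are fine; only the Segre-class transfer step needs to be replaced by this commutativity argument.
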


    From the excess intersection formula and the lemma above, we see that the skewness of a curve $X\subset\P^4$ implies that $[D_1^\dagger][\Gamma^\dagger]-(\iota_{\Delta^\dagger_\Gamma})_*\epsilon_{\Delta^\dagger_\Gamma}=0$. In particular, we would have
    \begin{align*}
        \ll D_1^\dagger\rr\ll\Gamma^\dagger\rr-\ll(\iota_{\Delta^\dagger_\Gamma})_*\epsilon_{\Delta^\dagger_\Gamma}\rr=0
    \end{align*}
    in the numerical group $\NN(B^\dagger)$.
    
    Since the expected dimension of $\Gamma^\dagger\cap D^\dagger_1$ is $0$, both $\Gamma^\dagger$ and $D^\dagger_1$ are smooth, and the component $\Delta^\dagger_\Gamma$ is an irreducible smooth curve, we have
    \begin{align*}
        \epsilon_{\Delta^\dagger_\Gamma}=&\left\{{s(\Delta^\dagger_\Gamma,B^\dagger)c(\N_{\Gamma^\dagger,B^\dagger}|_{\Delta^\dagger_\Gamma})c(\N_{D^\dagger_1,B^\dagger}|_{\Delta^\dagger_\Gamma})}\right\}_0\\        =&c_1(\N_{\Gamma^\dagger,B^\dagger}|_{\Delta^\dagger_\Gamma})+c_1(\N_{D^\dagger_1,B^\dagger}|_{\Delta^\dagger_\Gamma})-c_1(\N_{\Delta^\dagger_\Gamma,B^\dagger})\\
        =&\iota_{\Delta^\dagger_\Gamma}^*c_1(\T_{B^\dagger})-\iota_{\Delta^\dagger_\Gamma\Gamma^\dagger}^*c_1(\T_{\Gamma^\dagger})+\iota_{\Delta^\dagger_\Gamma}^*c_1(\T_{B^\dagger})-\iota_{\Delta^\dagger_\Gamma D^\dagger_1}^*c_1(\T_{D^\dagger_1})-\left(\iota_{\Delta^\dagger_\Gamma}^*c_1(\T_{B^\dagger})-c_1(\T_{\Delta^\dagger_\Gamma})\right)\\
        =&\iota_{\Delta^\dagger_\Gamma}^*c_1(\T_{B^\dagger})-\iota_{\Delta^\dagger_\Gamma\Gamma^\dagger}^*c_1(\T_{\Gamma^\dagger})-\iota_{\Delta^\dagger_\Gamma D^\dagger_1}^*c_1(\T_{D^\dagger_1})+c_1(\T_{\Delta^\dagger_\Gamma}),
    \end{align*}
    where $\iota$'s are the appropriate inclusions. Our task then becomes to compute the classes $\ll D_1^\dagger\rr,\ll\Gamma^\dagger\rr$, the first Chern classes above, and the numerical classes of their pullbacks in $\NN(\Delta^\dagger_\Gamma)$.

    \subsection{The factorization of $\tilde D_1\to D_1$}\label{decomp}\hfill\\
    In this section and the next section, we compute the class $[D_1^\dagger]$ in $\A(B^\dagger)$. To do this, we need to compute the class $[E^\dagger\cap D_1^\dagger]\in\A(E^\dagger)$. The class $[E^\dagger\cap D_1^\dagger]$ can be interpreted as the class induced by the projective subbundle $\P(\N_{\tilde\Delta_\Gamma,\tilde D_1})\subset\P(\N_{\tilde\Delta_\Gamma,B})\cong E^\dagger$ as long as $\tilde D_1$ is smooth. Thus, to compute $[D_1^\dagger]$, we would need to verify that $\tilde D_1$ is smooth, compute the canonical class $c_1(\T_{\tilde D_1})$ and its pullback in $\A(\tilde\Delta_\Gamma)$.
    
    For the structure of $\tilde D_1$, we first consider the incidence correspondence
    \begin{align*}
        \hat D_1:=\left\{(L,U,W):L\subset U\cap W\right\}\subset\P^4\times\Gr(2,V)\times\Gr(2,V),
    \end{align*}
    where $V$ is the $5$-dimensional vector space with $X\subset\P^4=\P(V)\cong\Gr(1,V)$.
    
    Write the restriction of the blowdown map $\pi:B\to G\times G$ to $\tilde D_1$ by $\pi_D:=\pi|_{\tilde D_1}:\tilde D_1\to D_1$. Then the birational morphism $\pi_D$ can be factorized into
        $\xymatrixcolsep{2pc}
        \xymatrix{
        \pi_D:\tilde D_1\ar[r]^-{\tilde\pi_D}
        &\hat D_1\ar[r]^-{\hat\pi_D}&D_1,
        }$
     where $\hat\pi_D$ is the projection from $\hat D_1$ to $\Gr(2,V)\times\Gr(2,V)$ and $\tilde\pi_D$ is the unique choice of morphism for the factorization. Explicitly, $\tilde\pi_D$ is the identity map outside the exceptional locus $E$, and under the identification from Lemma \ref{nortan}, the map
     \begin{align*}
         \tilde\pi_D|_{E\cap\tilde D_1}:E\cap\tilde D_1\cong\P(\T_G)\cap\tilde D_1\cong\P(\Hom(\S,\Q))\cap\tilde D_1\longto\hat D_1
     \end{align*}
     is given by
     \begin{align*}
         \tilde\pi_D|_{E\cap\tilde D_1}:(x,[v])\longmapsto(\ker v,x,x),
     \end{align*}
     where $(x,[v])\in\P(\Hom(\S,\Q))\cap\tilde D_1$ is treated as an element in the projective bundle $\P(\Hom(\S,\Q))$, with $v$ a nonzero map from the universal subbundle $\S$ to the universal quotient bundle $\Q$. The map $v$ indeed has an one dimensional kernel as $(x,[v])\in\P(\Hom(\S,\Q))\cap\tilde D_1$, see formula (\ref{nortand1}).

     To understand the structure of $\hat D_1$, we observe that the incidence variety
     $$\{(L,U):L\subset U\}\subset\P^4\times\Gr(2,V)$$
     is by definition isomorphic to the flag variety $\Fl(1,2,V)$ and the projective bundle $\P(\Q)$, where $\Q$ denotes the universal quotient bundle over $\P^4$. Thus we conclude the following properties of $\hat D_1$.
     
     \begin{prop}
         Let $\xymatrixcolsep{2pc}
        \xymatrix{
        \pi_D:\tilde D_1\ar[r]^-{\tilde\pi_D}
        &\hat D_1\ar[r]^-{\hat\pi_D}&D_1
        }$ be the factorization of $\pi_D$ described as above and $\Q$ be the universal quotient bundle of $\P^4$.
        \begin{enumerate}[(i)]
            \item The variety $\hat D_1$ is isomorphic to the fibre product of two copies of the projective bundle $\P(\Q)$ over $\P^4$, that is, $\hat D_1\cong\P(\Q)\times_{\P^4}\P(\Q)$; furthermore, under this identification, the bundle map $p:\hat D_1\to\P^4$ is given by the projection from $\hat D_1\subset\P^4\times\Gr(2,V)\times\Gr(2,V)$ to its first component.
            \item $\hat D_1$ is smooth.
            \item The variety $\hat\Delta_D:=\hat\pi_D^{-1}(\Delta_G)$ is isomorphic to the flag variety $\Fl(1,2,V)$ as well as the projective bundle $\P(\T_{\P^4})\cong\P(\Q)$, and the inclusion $i_D:\hat\Delta_D\cong\P(\Q)\hookto\hat D_1\cong\P(\Q)\times_{\P^4}\P(\Q)$ is given by the fibrewise inclusion of diagonals.
            \item Denote the rank one and two universal subbundles of the flag variety $\hat\Delta_D\cong\Fl(1,2,V)$ by $\S_1$ and $\S_2$ respectively and the trivial bundle induced by the ambient space by $\V$, then the bundle $\S_1$ is the universal line bundle of $\P^4$ pulled back to $\hat\Delta_D$ by $\hat\pi_\Delta:\hat\Delta_D\to\P^4$, and $\S_2/\S_1$ is the universal line bundle of the projective bundle $\hat\Delta_D\cong\P(\Q)$.
        \end{enumerate}
     \end{prop}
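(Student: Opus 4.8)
The plan is to prove the proposition by unwinding the incidence conditions defining $\hat D_1$ and carefully tracking the relevant universal bundles; no step requires a computation, so I would organize the argument around the standard description of a partial flag variety as a projective bundle.

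First I would recall the classical fact that the incidence variety $\{(L,U):L\subset U\}\subset\P(V)\times\Gr(2,V)$, which is by definition $\Fl(1,2,V)$, is isomorphic over $\P(V)=\P^4$ to the projective bundle $\P(\Q)$ of lines in the fibres of the universal quotient bundle $\Q$, in the projectivization convention already fixed in Section \ref{three} by $\O_E(-1)\subset\pi_E^*\Hom(\S,\Q)$; indeed a plane $U\supset L$ is the same datum as the line $U/L$ inside $V/L=\Q_{[L]}$. At the same time I would record the two bundle identifications needed for part (iv): under this isomorphism the tautological subbundle $\S_1$ of $\Fl(1,2,V)$ has fibre $L$ over $(L,U)$, hence is the pullback $\hat\pi_\Delta^*\O_{\P^4}(-1)$ of the universal line bundle of $\P^4$ along $\hat\pi_\Delta:(L,U)\mapsto[L]$, while $\S_2/\S_1$ has fibre $U/L$, which is precisely the tautological line $\O_{\P(\Q)}(-1)$ of $\P(\Q)$. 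Finally $\T_{\P^4}\cong\S^\vee\otimes\Q\cong\O_{\P^4}(1)\otimes\Q$, and projectivization is insensitive to twisting by a line bundle, so $\P(\T_{\P^4})\cong\P(\Q)$.

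Granting this, parts (i) and (ii) follow at once: $\hat D_1=\{(L,U,W):L\subset U,\ L\subset W\}$ is tautologically the fibre product over $\P^4$ of two copies of the projection $\Fl(1,2,V)\to\P^4$, so $\hat D_1\cong\P(\Q)\times_{\P^4}\P(\Q)$, and its structure map $p$ is the common projection to $\P^4=\Gr(1,V)$, i.e.\ the first projection of $\P^4\times\Gr(2,V)\times\Gr(2,V)$; moreover $\P(\Q)\times_{\P^4}\P(\Q)\cong\P(q^*\Q)$ is a $\P^3$-bundle over the smooth variety $\P(\Q)$, where $q:\P(\Q)\to\P^4$, hence $\hat D_1$ is smooth, and the count $4+3+3=10$ agrees with $\dim D_1$ from Proposition \ref{bipro}, reconfirming that $\hat\pi_D$ is birational. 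For part (iii), since $\Delta_G\subset D_1$ one has $\hat\Delta_D=\hat\pi_D^{-1}(\Delta_G)=\{(L,U,W)\in\hat D_1:U=W\}$, which is exactly the relative diagonal of $\P(\Q)\times_{\P^4}\P(\Q)$; hence $i_D$ is the fibrewise diagonal inclusion, and forgetting the repeated factor identifies $\hat\Delta_D$ with $\Fl(1,2,V)\cong\P(\Q)\cong\P(\T_{\P^4})$, with $\hat\pi_\Delta=p|_{\hat\Delta_D}$. Part (iv) is then precisely the two bundle identifications recorded in the previous paragraph, now read on $\hat\Delta_D$.

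Since everything here is formal, the only delicate point --- and the one whose outcome genuinely matters for the later Chern-class computations --- is the identification in the second paragraph: I must pin down the projectivization convention carefully enough to be sure that $\S_2/\S_1$ comes out as $\O_{\P(\Q)}(-1)$ with the correct sign, and not as its dual.
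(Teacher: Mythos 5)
Your proposal is correct and follows essentially the same route as the paper, which states the proposition as an immediate consequence of the observation that $\{(L,U):L\subset U\}\cong\Fl(1,2,V)\cong\P(\Q)$ over $\P^4$; you simply spell out the routine details (the fibre-product description, smoothness via the projective-tower structure, the relative diagonal, and the tautological-bundle identifications), all of which check out, including the dimension count $4+3+3=10=\dim D_1$ and the sign convention making $\S_2/\S_1$ the tautological sub-line bundle $\O_{\P(\Q)}(-1)$, consistent with $c_1(\S_2/\S_1)=-\zeta_\Q$ used later.
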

     
    The lemma below is convenient for the desired Chow ring of the bi-projective bundle $\hat D_1$.

     \begin{lem}\label{propro}
         Suppose that $Y$ is a smooth variety, and $\F,\L$ are two vector bundles over $Y$. Write $p_\F:\P(\F)\to Y$ and $p_\L:\P(\L)\to Y$, then there are canonical isomorphisms between the induced bi-projective bundle and the top spaces of the two corresponding projective towers; that is,
         $$\P(\F)\times_Y\P(\L)\cong\P(p_\F^*\L)\cong\P(p_\L^*\F).$$
         Furthermore, 
         $$\A(\P(\F)\times_Y\P(\L))=\frac{\A(Y)\left[\zeta_\F,\zeta_\L\right]}{\left(\sum_{i=0}^{\rank\F}\zeta_\F^{\rank\F-i}c_i(\F),\sum_{i=0}^{\rank\L}\zeta_\L^{\rank\L-i}c_i(\L)\right)}$$
         where, under the canonical identification $\P(\F)\times_Y\P(\L)\cong\P(p_\F^*\L)$, $\zeta_\F$ is the the class of hyperplane sections of $\P(\F)$ pullback to $\P(p_\F^*\L)$, and $\zeta_\L$ is the the class of hyperplane sections of $\P(p_\F^*\L)$.
     \end{lem}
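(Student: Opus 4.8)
The plan is to prove the two assertions — the isomorphisms of total spaces and the presentation of the Chow ring — by pure bundle-theoretic bookkeeping, with no hard geometry. First I would establish the canonical isomorphism $\P(\F)\times_Y\P(\L)\cong\P(p_\F^*\L)$. Recall that for a scheme $Z$ over $Y$, a map $Z\to\P(\F)$ over $Y$ is the same as a line subbundle (or a surjection to a line bundle, depending on the author's convention; I will use whichever convention is fixed earlier) of the pullback of $\F$ to $Z$. So a map $Z\to\P(\F)\times_Y\P(\L)$ over $Y$ is a pair consisting of such a sub-line-bundle of $\F_Z$ and one of $\L_Z$. On the other hand, a map $Z\to\P(p_\F^*\L)$ over $\P(\F)$ is a sub-line-bundle of the pullback of $p_\F^*\L$ to $Z$, i.e. of $\L_Z$, together with the structure map $Z\to\P(\F)$, i.e. a sub-line-bundle of $\F_Z$. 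These two functors on $Y$-schemes $Z$ are manifestly the same, so Yoneda gives the canonical isomorphism; symmetry in $\F$ and $\L$ gives $\P(p_\F^*\L)\cong\P(p_\L^*\F)$ as well. This also identifies the two hyperplane-section classes correctly: $\zeta_\L$ on $\P(p_\F^*\L)$ is the relative $\O(1)$ for the bundle $p_\F^*\L$, and $\zeta_\F$ is pulled back from $\P(\F)$.

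**Computing the Chow ring.** Once the total space is identified with the iterated projective bundle $\P(p_\F^*\L)\to\P(\F)\to Y$, the ring structure follows by applying the projective bundle formula (Grothendieck) twice. The inner step gives
$$
\A(\P(\F))=\A(Y)[\zeta_\F]\Big/\Big(\textstyle\sum_{i=0}^{\rank\F}\zeta_\F^{\rank\F-i}c_i(\F)\Big),
$$
a free $\A(Y)$-module. The outer step, applied to $p_\F^*\L$ over $\P(\F)$, gives
$$
\A\big(\P(p_\F^*\L)\big)=\A(\P(\F))[\zeta_\L]\Big/\Big(\textstyle\sum_{i=0}^{\rank\L}\zeta_\L^{\rank\L-i}c_i(p_\F^*\L)\Big),
$$
and $c_i(p_\F^*\L)=p_\F^*c_i(\L)$, which in the combined presentation is just the image of $c_i(\L)\in\A(Y)$. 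Substituting the first quotient into the second yields exactly
$$
\A(Y)[\zeta_\F,\zeta_\L]\Big/\Big(\textstyle\sum_{i}\zeta_\F^{\rank\F-i}c_i(\F),\ \sum_{i}\zeta_\L^{\rank\L-i}c_i(\L)\Big),
$$
since the two relation ideals involve disjoint sets of new variables and the successive quotients are flat (free) $\A(Y)$-modules, so there is no interaction term. The symmetry statement $\P(p_\F^*\L)\cong\P(p_\L^*\F)$ shows this presentation does not depend on which bundle we projectivize first, which is consistent with the symmetric form of the answer.

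**Anticipated obstacle.** There is essentially no deep obstacle here; the only thing requiring care is \emph{convention consistency}. The identity $\A(\P(\F))=\A(Y)[\zeta_\F]/(\sum \zeta_\F^{r-i}c_i(\F))$ versus the alternative $\sum (-1)^i\zeta_\F^{r-i}c_i(\F)$ depends on whether $\P(\F)$ denotes lines or quotient lines and on the sign convention for $\zeta_\F$ (hyperplane class of the sub versus first Chern of the tautological quotient). I would make sure the relation written matches the one already used in Lemma \ref{subproj} and Lemma \ref{projbun} of the paper, so that the formula can be cited downstream without a sign mismatch. Granting that, the functorial identification of total spaces plus two invocations of the projective bundle theorem complete the proof, with the only mild subtlety being the verification that the tower is flat at each stage so that the two relations can be combined freely.
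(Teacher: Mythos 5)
Your proposal is correct and follows essentially the same route as the paper: the paper simply cites \cite[Lemma 3.8]{protower} for the identification of $\P(\F)\times_Y\P(\L)$ with the iterated projective bundle and \cite[Theorem 9.6]{3264} for the two applications of the projective bundle formula, which is exactly the argument you spell out (your Yoneda/universal-property verification and the remark on convention consistency are the content of those citations).
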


     \begin{proof}
         The first assertion follows immediately from \cite[Lemma 3.8]{protower}, and the ``furthermore" part follows from \cite[Theorem 9.6]{3264}.
     \end{proof}

    \begin{prop}\label{blowupp}
        Let $i_D:\hat\Delta_D\cong\P(\Q)\hookto\hat D_1\cong\P(\Q)\times_{\P^4}\P(\Q)$ be the fibrewise diagonal inclusion, then the corresponding Chow rings satisfy the following properties.
        \begin{enumerate}[(i)]
            \item The Chow ring of $\hat\Delta_D\cong\P(\Q)$ is given by
            \begin{align*}
               \A(\hat\Delta_D)=\frac{\Z\left[e,\zeta_\Q\right]}{\left(e^5,\sum_{i=0}^4\zeta_\Q^{4-i}e^i\right)},
            \end{align*}
            where $e$ is the class of hyperplanes of $\P^4$ pulled back by $\hat\pi_\Delta:=\hat\pi_D|_{\hat\Delta_D}:\hat\Delta_D\to\P^4$ and $\zeta_\Q$ is the class of hyperplane sections of the projective bundle $\hat\Delta_D\cong\P(\Q)$.
            \item The Chow ring of $\hat D_1\cong\P(\Q)\times_{\P^4}\P(\Q)$ is given by
            \begin{align*}
               \A(\hat D_1)=\frac{\Z\left[e,\zeta_1,\zeta_2\right]}{\left(e^5,\sum_{i=0}^4\zeta_1^{4-i}e^i,\sum_{i=0}^4\zeta_2^{4-i}e^i\right)},
            \end{align*}
            where $e$ is the class of hyperplanes of $\P^4$ pulled back by $\hat\pi_D:\hat D_1\to\P^4$ and $\zeta_1,\zeta_2$ are the classes of hyperplane sections of the two components of the fibre product of projective bundles $\hat D_1\cong\P(\Q)\times_{\P^4}\P(\Q)$.
            \item The pullback $i_D^*:\A(\hat D_1)\to\A(\hat\Delta_D)$ is given by $e\mapsto e$ and $\zeta_i\mapsto\zeta_\Q$ for $i=1,2$.
        \end{enumerate}
    \end{prop}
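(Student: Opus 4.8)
The plan is to deduce all three statements from Grothendieck's projective bundle formula together with Lemma \ref{propro}, the only substantive input being the Chern classes of the universal quotient bundle $\Q$ over $\P^4$. Recall that $\Q$ sits in the Euler sequence $0\to\O_{\P^4}(-1)\to\O_{\P^4}^{\oplus 5}\to\Q\to 0$, so that $c(\Q)=c(\O_{\P^4}(-1))^{-1}=(1-e)^{-1}=1+e+e^2+e^3+e^4$ in $\A(\P^4)=\Z[e]/(e^5)$, where $e$ is the hyperplane class; since $\rank\Q=4$ this gives $c_i(\Q)=e^i$ for $0\le i\le 4$.

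For $(i)$ I would invoke the structural description of $\hat\Delta_D\cong\P(\Q)$ (with $\hat\pi_\Delta$ the bundle map) established just above, and apply Grothendieck's formula to get $\A(\hat\Delta_D)=\A(\P^4)[\zeta_\Q]/\bigl(\sum_{i=0}^4\zeta_\Q^{4-i}c_i(\Q)\bigr)$; substituting $\A(\P^4)=\Z[e]/(e^5)$ and $c_i(\Q)=e^i$ yields the asserted presentation. For $(ii)$, since $\hat D_1\cong\P(\Q)\times_{\P^4}\P(\Q)$, Lemma \ref{propro} applied with $Y=\P^4$ and $\F=\L=\Q$ gives $\A(\hat D_1)=\A(\P^4)[\zeta_1,\zeta_2]/\bigl(\sum_{i=0}^4\zeta_1^{4-i}c_i(\Q),\ \sum_{i=0}^4\zeta_2^{4-i}c_i(\Q)\bigr)$, and the same substitution finishes it.

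For $(iii)$, since $e,\zeta_1,\zeta_2$ generate $\A(\hat D_1)$ it suffices to evaluate $i_D^*$ on these. Both $\hat\pi_D\circ i_D$ and $\hat\pi_\Delta$ are the projection to $\P^4$, so $i_D^*e=e$. For the $\zeta_i$, the canonical identification in Lemma \ref{propro} exhibits $\zeta_i$ as the pullback of the hyperplane class of the $i$-th factor $\P(\Q)$ along the projection $\mathrm{pr}_i\colon\hat D_1\to\P(\Q)$, equivalently $\O_{\hat D_1}(\zeta_i)=\mathrm{pr}_i^*\O_{\P(\Q)}(1)$; since $i_D$ is the fibrewise diagonal we have $\mathrm{pr}_i\circ i_D=\mathrm{id}_{\P(\Q)}$, hence $i_D^*\zeta_i=\zeta_\Q$ for $i=1,2$.

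The argument is essentially bookkeeping, so I do not expect a real obstacle; the step most likely to require care is $(iii)$, where one must unwind the canonical isomorphisms of Lemma \ref{propro} to see that the fibrewise diagonal pulls each $\O_{\hat D_1}(-\zeta_i)$ back to the tautological subbundle of $\P(\Q)$ for \emph{both} $i$ (for the second factor this uses that the tautological subbundle of $\P(\Q)\times_{\P^4}\P(\Q)\cong\P(p_\Q^*\Q)$ restricts fibrewise to that of the second $\P(\Q)$). Upstream of everything, one must keep the convention for $\P(-)$ — tautological \emph{sub}bundle, with $\zeta$ the first Chern class of its dual, i.e.\ the hyperplane section class — consistent with the sign in the Grothendieck relation $\sum_i\zeta^{r-i}c_i=0$, which is what makes the displayed quotient ideals come out as stated.
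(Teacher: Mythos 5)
Your proposal is correct and follows exactly the route the paper intends: the proposition is stated without a separate proof precisely because it is the immediate specialization of Lemma \ref{propro} and the projective bundle formula to $Y=\P^4$, $\F=\L=\Q$, using $c_i(\Q)=e^i$ from the Euler sequence, with part (iii) coming from $\mathrm{pr}_i\circ i_D=\mathrm{id}_{\P(\Q)}$. Your explicit care about the sub-bundle convention and the identification of $\zeta_2$ under $\P(\Q)\times_{\P^4}\P(\Q)\cong\P(p_\Q^*\Q)$ fills in the only details the paper leaves implicit.
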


    Now we study the map $\tilde D_1\to\hat D_1$. Observe that the normal bundle $\N_{\hat\Delta_D,\hat D_1}$ is isomorphic to $\Hom(\S_2/\S_1,\V/\S_2)$, then observe that $\P(\Hom(\S_2/\S_1,\V/\S_2))$ is naturally isomorphic to $\tilde D_1\cap E\subset\P(\T_G)$, given by
    \begin{align*}
        (L,U;[v])\longmapsto(U,\tilde v),
    \end{align*}
    where $(L,U)\in\hat\Delta_D\cong\Fl(1,2,V)$ and $[v]\in\P(\Hom(\S_2/\S_1,\V/\S_2)_{(L,U)})$ is treated as the class of a map $v\in\Hom(\S_2/\S_1,\V/\S_2)_{(L,U)}$ and $\tilde v\in\P(\T_G)\cong\P(\Hom(\S,\V/\S)_U)$ is the unique class of maps from the fibre $\S_U$ of the universal subspace bundle of $G:=\Gr(2,V)$ at $U$ to the fibre $(\V/\S)_U$ of the universal quotient bundle at $U$ that induces the map $v$.
    
    Thus we have the blowup diagram
     \begin{align}\label{blowupd}
        \xymatrixcolsep{2pc}
        \xymatrix{
        \tilde\Delta_D\ar@{^{(}->}[r]^-{j_D}\ar[d]_{\tilde\pi_\Delta}
        &\tilde D_1\cong\Bl_{\hat\Delta_D}\hat D_1\ar[d]^{\tilde\pi_D}\\
        \hat\Delta_D\ar@{^{(}->}[r]^-{i_D}&\hat D_1\cong\P(\Q)\times_{\P^4}\P(\Q).
        }
    \end{align}

    To compute $c_1(\T_{\tilde D_1})$, we recall the formula of the first Chern classes of projective bundles and blowup spaces, cf. \cite[page 608]{grif_harr}.
    
    \begin{lem}\label{cTP}
        Let $Y$ be an $k$-dimensional smooth variety and $\F\to Y$ be a rank $r$ vector bundle. Consider the projective bundle $\pi_\F:\P(\F)\to Y$, then
        $$c_1(\T_{\P(\F)})=r\zeta_\F+\pi_\F^*(c_1(\F)+c_1(\T_Y)),$$
        where $\zeta_\F$ is the class of hyperplane sections of $\P(F)$.
    \end{lem}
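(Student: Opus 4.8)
The plan is to treat the first Chern class of a projective bundle as a routine consequence of the relative Euler sequence, together with the standard exact sequence relating the tangent bundle of the total space to the relative tangent bundle and the pullback of the tangent bundle of the base. First I would recall that for $\pi_\F:\P(\F)\to Y$ there is the relative Euler sequence
\begin{align*}
0\longrightarrow\O_{\P(\F)}\longrightarrow\pi_\F^*\F\otimes\O_{\P(\F)}(1)\longrightarrow\T_{\P(\F)/Y}\longrightarrow0,
\end{align*}
where $\O_{\P(\F)}(1)$ is the line bundle with $c_1(\O_{\P(\F)}(1))=\zeta_\F$ in our convention for $\P(\F)$ (the bundle of one-dimensional quotients, so that $\zeta_\F$ restricts to a hyperplane class on each fibre). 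Taking first Chern classes, $c_1(\T_{\P(\F)/Y})=r\zeta_\F+\pi_\F^*c_1(\F)$, since $\pi_\F^*\F\otimes\O_{\P(\F)}(1)$ has rank $r$ and $c_1$ equal to $\pi_\F^*c_1(\F)+r\zeta_\F$.

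Next I would invoke the short exact sequence of tangent bundles for the smooth morphism $\pi_\F$,
\begin{align*}
0\longrightarrow\T_{\P(\F)/Y}\longrightarrow\T_{\P(\F)}\longrightarrow\pi_\F^*\T_Y\longrightarrow0,
\end{align*}
which is exact because $\pi_\F$ is a smooth fibration (indeed a projective bundle). Additivity of $c_1$ in short exact sequences then gives
\begin{align*}
c_1(\T_{\P(\F)})=c_1(\T_{\P(\F)/Y})+c_1(\pi_\F^*\T_Y)=r\zeta_\F+\pi_\F^*c_1(\F)+\pi_\F^*c_1(\T_Y),
\end{align*}
which is exactly the claimed formula $c_1(\T_{\P(\F)})=r\zeta_\F+\pi_\F^*(c_1(\F)+c_1(\T_Y))$.

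There is essentially no hard step here; the only point requiring care is a consistent sign/normalization convention for $\zeta_\F$, i.e.\ making sure that the $\O_{\P(\F)}(1)$ appearing in the relative Euler sequence is the same line bundle whose $c_1$ we call $\zeta_\F$, so that the coefficient of $\zeta_\F$ comes out as $+r$ rather than, say, involving a sign. (With the ``bundle of rank-one quotients'' convention used in \cite{3264} and throughout this paper via Lemma \ref{subproj} and Lemma \ref{propro}, the computation above is the correct one.) Once conventions are fixed, the formula follows immediately, and this is precisely the statement recorded in \cite[page 608]{grif_harr}. This lemma will then be applied to the projective bundles $\hat\Delta_D\cong\P(\Q)$, $\hat D_1\cong\P(\Q)\times_{\P^4}\P(\Q)$, and the exceptional divisor of the blowup (\ref{blowupd}) in order to extract $c_1(\T_{\tilde D_1})$ in the subsequent sections.
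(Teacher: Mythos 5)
Your argument is correct: the relative Euler sequence gives $c_1(\T_{\P(\F)/Y})=r\zeta_\F+\pi_\F^*c_1(\F)$, and the tangent sequence of the smooth morphism $\pi_\F$ adds $\pi_\F^*c_1(\T_Y)$, which is exactly the standard derivation of this formula (and is consistent with the paper's conventions, e.g.\ it reproduces $c_1(\T_{\hat\Delta_D})=4\zeta_\Q+6e$). The paper does not prove this lemma but simply cites \cite[page 608]{grif_harr}, so your proposal supplies the expected standard proof rather than deviating from it.
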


    \begin{lem}\label{blowupch}
        Suppose that $Y$ is a smooth variety with a smooth codimension $r$ subvariety $i_Z:Z\hookto Y$. Denote $\pi_Y:\tilde Y:=\Bl_ZY\to Y$ the blowup of $Y$ along $Z$ with the exceptional locus $\tilde Z$, $\pi_Z:=\pi_Y|_{\tilde Z}$, and the inclusion $j_Z:\tilde Z\hookto\tilde Y$, so we have the blowup diagram
        \begin{align*}
            \xymatrixcolsep{2pc}
            \xymatrix{
            \tilde Z\ar@{^{(}->}[r]^-{j_Z}\ar[d]_{\pi_Z}
            &\tilde Y\ar[d]^{\pi_Y}\\
            Z\ar@{^{(}->}[r]^{i_Z}&Y.
            }
        \end{align*}
        Then
        \begin{align*}
            c_1(\T_{\tilde Y})=\pi_Y^*c_1(\T_Y)+{j_Z}_*(1-r).
        \end{align*}
    \end{lem}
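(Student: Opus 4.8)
The plan is to reduce the statement to the classical formula for the canonical class of a blowup along a smooth center and then pass from canonical classes to first Chern classes of tangent bundles. \textbf{Step 1.} Invoke the blowup canonical class formula: since $i_Z : Z \hookto Y$ is smooth of codimension $r$, one has $K_{\tilde Y} = \pi_Y^* K_Y + (r-1)[\tilde Z]$ in $\A^1(\tilde Y)$, where $[\tilde Z]$ is the class of the exceptional divisor; this is standard, see for instance \cite[Section 13.6]{3264} or the corresponding discussion in \cite{fulton}. \textbf{Step 2.} Since $Y$ and $\tilde Y$ are both smooth, $c_1(\T_Y) = -K_Y$ and $c_1(\T_{\tilde Y}) = -K_{\tilde Y}$; because $\pi_Y^*$ is a ring homomorphism, negating the identity of Step 1 gives $c_1(\T_{\tilde Y}) = \pi_Y^* c_1(\T_Y) - (r-1)[\tilde Z]$. \textbf{Step 3.} Identify the divisor class $[\tilde Z] \in \A^1(\tilde Y)$ of the reduced irreducible exceptional locus with ${j_Z}_*(1)$, the proper pushforward of the fundamental class $1 \in \A^0(\tilde Z)$ under the closed embedding $j_Z$; this is immediate from the definition of pushforward of cycles, using $\dim \tilde Z = \dim Y - 1$. \textbf{Step 4.} Since $1 - r$ is an integer and ${j_Z}_*$ is $\A(\tilde Z)$-linear (projection formula), $-(r-1)[\tilde Z] = {j_Z}_*(1-r)$, and substituting into Step 2 yields $c_1(\T_{\tilde Y}) = \pi_Y^* c_1(\T_Y) + {j_Z}_*(1-r)$, as claimed.

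If one prefers an argument that does not quote the canonical class formula, the alternative is to compute directly on the exceptional divisor. Here $\tilde Z \cong \P(\N_{Z,Y})$ is a $\P^{r-1}$-bundle over $Z$ with $\N_{\tilde Z, \tilde Y} \cong \O_{\tilde Z}(-1)$; feeding the tangent sequence $0 \to \T_{\tilde Z} \to j_Z^*\T_{\tilde Y} \to \N_{\tilde Z,\tilde Y} \to 0$ together with the relative tangent and Euler sequences of the projective bundle $\pi_Z : \tilde Z \to Z$ (cf. Lemma \ref{cTP}) computes $j_Z^* c_1(\T_{\tilde Y})$, and the fact that $\A(\tilde Y)$ is generated by $\pi_Y^*\A(Y)$ and ${j_Z}_*\A(\tilde Z)$ (as already used for $\A(B)$ in Section \ref{three}) promotes this local computation to the global formula.

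I expect no genuine obstacle here: the content is pure bookkeeping. The only point that demands care, in either route, is the sign convention --- whether $\N_{\tilde Z,\tilde Y}$ is $\O_{\tilde Z}(1)$ or $\O_{\tilde Z}(-1)$, and the corresponding sign relating $K$ to $c_1(\T)$ --- since this is exactly what fixes the coefficient of ${j_Z}_*(1)$ as $1-r$ rather than $r-1$ (and makes the formula trivially correct in the degenerate case $r=1$, where the blowup is an isomorphism). I would present the four-step argument above as the proof.
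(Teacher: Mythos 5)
Your proposal is correct, and it rests on exactly the same fact the paper invokes: Lemma \ref{blowupch} is stated in the paper as a recalled standard result (cf.\ the citation to \cite[page 608]{grif_harr}), namely the canonical class formula $K_{\tilde Y}=\pi_Y^*K_Y+(r-1)[\tilde Z]$ for a blowup along a smooth center, which your Steps 1--4 correctly translate into the $c_1$ statement via $c_1(\T)=-K$ and $[\tilde Z]={j_Z}_*(1)$. The only cosmetic quibble is that multiplying ${j_Z}_*(1)$ by the integer $1-r$ needs only additivity of proper pushforward, not the projection formula; this does not affect correctness.
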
    

    Applying Lemma \ref{propro} and Lemma \ref{cTP} twice, we get $c_1(\T_{\tilde\Delta_D})=4\zeta_\Q+6e$ and $c_1(\T_{\hat D_1})=4\zeta_1+4\zeta_2+7e$. Plug this back into Lemma \ref{blowupch}, we then conclude that
    \begin{align}\label{TtilD}
        c_1(\T_{\tilde D_1})=\pi_D^*(4\zeta_1+4\zeta_2+7e)-{j_D}_*(2).
    \end{align}

    \subsection{The class $\ll D_1^\dagger\rr$}\hfill\\
     Now, we shift our attention to $[\tilde\Delta_\Gamma]\in\A(\tilde D_1)$. Since $\tilde\Delta_\Gamma:=E\cap\Gamma\subset E\cap\tilde D_1=\tilde\Delta_D$ is in the exceptional locus of the blowup space $\tilde D_1=\Bl_{\hat\Delta_D}\hat D_1$, it is suffice to compute $[\tilde\Delta_\Gamma]$ as a class in $\A(\tilde\Delta_D)\cong\A(\P(\Hom(\S_2/\S_1,\V/\S_2)))$.

    Consider the map $\hat\gamma:X\to\hat\Delta_D\cong\Fl(1,2,V)$ given by $x\mapsto(x,T_{x,X})$, where $x$ and $T_{x,X}$ are treated as one and two dimensional subspaces in $V$. Recall that $\T_{\hat\Delta_D}=\Hom(\S_1,\S_2/\S_1)\oplus\Hom(\S_1,\V/\S_2)\oplus\Hom(\S_2/\S_1,\V/\S_2)$, cf. \cite{flag} and consider the diagram
    \begin{align*}
        \xymatrixcolsep{2pc}
        \xymatrix{
        \T_X\ar[r]\ar[dr]&\hat\gamma^*\Hom(\S_2/\S_1,\V/\S_2)\ar[d]\ar[r]&\Hom(\S_2/\S_1,\V/\S_2)\ar[d]\\
        &X\ar[r]^-{\hat\gamma}&\hat\Delta_D,
        }
    \end{align*}
    where the top left map is given by the composition of the pushforward map between tangent bundles and the projection. Then the top row of the diagram induces a map $\tilde\gamma:\P(\T_X)\to\P(\Hom(\S_2/\S_1,\V/\S_2))\cong\tilde\Delta_D$, which is an inclusion with its image being the curve $\tilde\Delta_\Gamma:=\tilde\Gamma\cap E$.

    Together with Lemma \ref{projcur}, we deduce that
    \begin{align}\label{tildeX}
        [\tilde\Delta_\Gamma]=&(\tilde\pi_\Delta^*([\hat\gamma(X)]))\zeta_\H^2+\left(\tilde\pi_\Delta^*(c_1(\Hom(\S_2/\S_1,\V/\S_2))[\hat\gamma(X)])+\hat\gamma_*(K)\right)\zeta_\H\\\nonumber
        =&(\tilde\pi_\Delta^*([\hat\gamma(X)]))\zeta_\H^2+\left(\tilde\pi_\Delta^*(c_1(\Hom(\S_2/\S_1,\V/\S_2))[\hat\gamma(X)])+(2g-2)e^4\zeta_\Q^3\right)\zeta_\H
    \end{align}
    where $K$ denotes the canonical class of $X$, $\zeta_\H$ the class of hyperplanes of the projective bundle $\tilde\Delta_D\cong\P(\Hom(\S_2/\S_1,\V/\S_2))$, the map $\tilde\pi_\Delta$ is defined as in diagram (\ref{blowupd}), and $e^4\zeta_\Q^3$ the class of a point in $\A(\hat\Delta_D)$.

    By Proposition \ref{blowupp}, and the sequences
    \begin{align*}
        \xymatrixcolsep{2pc}
        \xymatrix{
        0\ar[r]&\S_1\ar[r]&\S_2\ar[r]&\S_2/\S_1\ar[r]&0
        }
    \end{align*}
    and
    \begin{align*}
        \xymatrixcolsep{2pc}
        \xymatrix{
        0\ar[r]&\S_2\ar[r]&\V\ar[r]&\V/\S_2\ar[r]&0,
        }
    \end{align*}
    we have $c_1(\S_1)=-e$, $c_1(\S_2/\S_1)=-\zeta_\Q$, $c_1(\S_2)=-e-\zeta_\Q$, and $c_1(\V/\S_2)=e+\zeta_\Q$. Thus
    \begin{align}\label{Hch}
        c_1(\Hom(\S_2/\S_1,\V/\S_2))=&c_1\left((\S_2/\S_1)^*\otimes(\V/\S_2)\right)\\\nonumber
        =&(\rank(\V/\S_2))c_1((\S_2/\S_1)^*)+c_1(\V/\S_2)\\\nonumber
        =&e+4\zeta_\Q.
    \end{align}

    For the class of $\hat\gamma(X)$, we observe that there is a commutative diagram
    \begin{align*}
        \xymatrixcolsep{2pc}
        \xymatrix{
        \P(\T_X)\ar@{^{(}->}[r]\ar[d]_{\cong}
        &\hat\Delta_D\cong\P(\T_{\P^4})\ar[d]^{\hat\pi_\Delta}\\
        X\ar@{^{(}->}[r]^-{\iota_X}\ar@{^{(}->}[ru]^-{\hat\gamma}&\P^4,
        }
    \end{align*}
    where the map on the top row is induced by the total derivative $\T_X\to\T_{\P^4}$ of $\iota_X$. Let $\zeta_\T$ be the hyperplane class of $\P(\T_{\P^4})$ with respect to the vector bundle $\T_{\P^4}$. Lemma \ref{projcur} then gives us
    \begin{align*}
        [\hat\gamma(X)]=(\hat\pi_\Delta^*[\iota_X(X)])\zeta_\T^3+(\hat\pi_\Delta^*(c_1(\T_{\P^4})[\iota_X(X)]+{\iota_X}_*(K)))\zeta_\T^2.
    \end{align*}

    Since $e^4$ is the pullback of the class of a point in $\P^4$ and $de^3$ is the pullback of the class of $X$ in $\P^4$, as $d:=\deg_{\P^4}X$, we have $\hat\pi_\Delta^*{\iota_X}_*(K)=(2g-2)e^4$, $\hat\pi_\Delta^*[\iota_X(X)]=de^3$, and $\hat\pi_\Delta^*c_1(\T_{\P^4})=5e$. In addition, since $\T_{\P^4}=\S^*\otimes\Q$, where $\S$ and $\Q$ denote the universal line and quotient bundles of $\P^4$, we have $\O_{\P(\T_{\P^4})}(-1)=\hat\pi_\Delta^*(\S^*)\otimes \O_{\P(\Q)}(-1)$, cf. \cite[Corollary 9.5]{3264}. Thus, $\zeta_\T=\zeta_\Q-e$. Therefore,
    \begin{align}\label{hatX}
        [\hat\gamma(X)]=&de^3(\zeta_\Q-e)^3+((5e)(de^3)+(2g-2)e^4)(\zeta_\Q-e)^2\\\nonumber
        =&de^3\zeta_\Q^3+d^\vee e^4\zeta_\Q^2.
    \end{align}    

    To finish the calculation, we need the structure of the Chow ring $\A(\tilde\Delta_D)$.
    \begin{prop}\label{blowupl}
        Regarding $\tilde\Delta_D$ as the projectification of the vector bundle $\Hom(\S_2/\S_1,\V/\S_2)$ over $\hat\Delta_D\cong\P(\Q)$, then Chow ring of $\tilde\Delta_D$ is given by
        $$\A(\tilde\Delta_D)=\frac{\Z[e,\zeta_\Q,\zeta_\H]}{\left(e^5,\sum_{i=0}^4e^i\zeta_\Q^{4-i},\sum_{j=0}^3c_j(\Hom(\S_2/\S_1,\V/\S_2))\zeta_\H^{3-j}\right)},$$
        where $e$ is induced by the hyperplane class of $\P^4$, $\zeta_\Q$ the hyperplane section class of $\P(\Q)$, and $\zeta_\H$ the hyperplane section class of $\P(\Hom(\S_2/\S_1,\V/\S_2))$.
    \end{prop}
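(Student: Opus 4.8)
The plan is to get the presentation of $\A(\tilde\Delta_D)$ by applying the projective bundle formula one more time on top of the description of $\A(\hat\Delta_D)$ recorded in Proposition \ref{blowupp}. Recall from the discussion preceding the statement that the exceptional divisor $\tilde\Delta_D=E\cap\tilde D_1$ of the blowup $\tilde D_1\cong\Bl_{\hat\Delta_D}\hat D_1$ is canonically identified with the projectivization of the normal bundle $\N_{\hat\Delta_D,\hat D_1}\cong\Hom(\S_2/\S_1,\V/\S_2)$ over $\hat\Delta_D\cong\P(\Q)$. Since $\S_2/\S_1$ has rank $1$ and $\V/\S_2$ has rank $5-2=3$, the bundle $\Hom(\S_2/\S_1,\V/\S_2)$ has rank $3$, so $\tilde\pi_\Delta:\tilde\Delta_D\to\hat\Delta_D$ is a $\P^2$-bundle.

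First I would invoke Proposition \ref{blowupp}(i), which gives
$$\A(\hat\Delta_D)=\frac{\Z[e,\zeta_\Q]}{\left(e^5,\ \sum_{i=0}^4 e^i\zeta_\Q^{4-i}\right)}.$$
Then, applying the Grothendieck projective bundle presentation in the form used in Lemma \ref{propro}, i.e.\ \cite[Theorem 9.6]{3264}, to $\tilde\pi_\Delta:\tilde\Delta_D=\P(\Hom(\S_2/\S_1,\V/\S_2))\to\hat\Delta_D$ with $\zeta_\H$ the hyperplane section class of this projective bundle, $\A(\tilde\Delta_D)$ is the free $\A(\hat\Delta_D)$-module on $1,\zeta_\H,\zeta_\H^2$ subject to the single relation $\sum_{j=0}^3 c_j(\Hom(\S_2/\S_1,\V/\S_2))\,\zeta_\H^{3-j}=0$. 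Since this relation is monic of degree $3$ in $\zeta_\H$, substituting the presentation of $\A(\hat\Delta_D)$ from the previous step (and suppressing the pullback $\tilde\pi_\Delta^*$ on the classes $e$ and $\zeta_\Q$, as is done throughout this section) produces exactly the claimed presentation of $\A(\tilde\Delta_D)$.

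There is no genuine obstacle here: the only points requiring care are bookkeeping, namely confirming that $\Hom(\S_2/\S_1,\V/\S_2)$ has rank $3$ so that the defining relation in $\zeta_\H$ has top degree $3$, and that the index and sign conventions in the projective bundle relation match those of Lemma \ref{propro}; the geometric content, the identification $\tilde\Delta_D\cong\P(\Hom(\S_2/\S_1,\V/\S_2))$, was already established just before the statement. For completeness I would note that the Chern classes $c_j(\Hom(\S_2/\S_1,\V/\S_2))$ in the last relation can be rewritten in $e$ and $\zeta_\Q$ via $c_1(\S_2/\S_1)=-\zeta_\Q$ and $c_1(\V/\S_2)=e+\zeta_\Q$, for instance $c_1(\Hom(\S_2/\S_1,\V/\S_2))=e+4\zeta_\Q$ as in (\ref{Hch}), but this expansion is not needed for the presentation as stated.
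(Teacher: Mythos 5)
Your proposal is correct and matches the paper's (implicit) argument: the paper states this proposition without proof precisely because, once $\tilde\Delta_D\cong\P(\Hom(\S_2/\S_1,\V/\S_2))$ over $\hat\Delta_D\cong\P(\Q)$ is established, the presentation follows from stacking the projective bundle formula of \cite[Theorem 9.6]{3264} on top of Proposition \ref{blowupp}(i), exactly as you do. Your rank bookkeeping ($\Hom(\S_2/\S_1,\V/\S_2)$ has rank $3$, giving a monic degree-$3$ relation in $\zeta_\H$) is the only point needing care, and you handle it correctly.
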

    In particular, we note that $e^4\zeta_\Q^4=0$, $e^3\zeta_\Q^4=-e^4\zeta_\Q^3$, $\zeta_\H^3=-c_1(\Hom(\S_2/\S_1,\V/\S_2))=-(e+4\zeta_\Q)\zeta_\H^2$, and the class of a point is $e^4\zeta_\Q^3\zeta_\H^2$.

    \medskip

    Now, put (\ref{Hch}) and $(\ref{hatX})$ back into (\ref{tildeX}), we obtain
    \begin{align*}
        [\tilde\Delta_\Gamma]=&(de^3\zeta_\Q^3+d^\vee e^4\zeta_\Q^2)\zeta_\H^2+\left((e+4\zeta_\Q)(de^3\zeta_\Q^3+d^\vee e^4\zeta_\Q^2)+(2g-2)e^4\zeta_\Q^3\right)\zeta_\H\\
        =&de^3\zeta_\Q^3\zeta_\H^2+d^\vee e^4\zeta_\Q^2\zeta_\H^2+de^4\zeta_\Q^3\zeta_\H+4de^3\zeta_\Q^4\zeta_\H+4d^\vee e^4\zeta_\Q^3\zeta_\H+(2g-2)e^4\zeta_\Q^3\zeta_\H\\
        =&de^3\zeta_\Q^3\zeta_\H^2+d^\vee e^4\zeta_\Q^2\zeta_\H^2+(5d+10g-10)e^4\zeta_\Q^3\zeta_\H
    \end{align*}
    as a class in $\A(\P(\tilde\Delta_D))$, and so
    \begin{align*}
        [\tilde\Delta_\Gamma]={j_D}_*\left(de^3\zeta_\Q^3\zeta_\H^2+d^\vee e^4\zeta_\Q^2\zeta_\H^2+(5d+10g-10)e^4\zeta_\Q^3\zeta_\H\right)\in\A(\tilde D_1)
    \end{align*}

    Therefore, by the formula above, formula (\ref{TtilD}), Proposition \ref{blowupp}, and Proposition \ref{blowupl}, we arrive at
    \begin{align*}
        &i^\dagger_{D*}i_D^{\dagger*}(c_1(\T_{\tilde D_1}))\\
        =&[\tilde\Delta_\Gamma]c_1(\T_{\tilde D_1})\\
        =&{j_D}_*\left(de^3\zeta_\Q^3\zeta_\H^2+d^\vee e^4\zeta_\Q^2\zeta_\H^2+(5d+10g-10)e^4\zeta_\Q^3\zeta_\H\right)\left(\pi_D^*(4\zeta_1+4\zeta_2+7e)-{j_D}_*(2)\right)\\
        =&{j_D}_*\left(\left(de^3\zeta_\Q^3\zeta_\H^2+d^\vee e^4\zeta_\Q^2\zeta_\H^2+(5d+10g-10)e^4\zeta_\Q^3\zeta_\H\right)(8\zeta_\Q+7e+2\zeta_\H)\right)\\
        =&{j_D}_*\left(-de^4\zeta_\Q^3\zeta_\H^2+8d^\vee e^4\zeta_\Q^3\zeta_\H^2+2(5d+10g-10)e^4\zeta_\Q^3\zeta_\H^2+2(de^3\zeta_\Q^3+d^\vee e^4\zeta_\Q^2)\zeta_\H^3\right)\\
        =&{j_D}_*\left((8d^\vee+9d+20g-20)e^4\zeta_\Q^3\zeta_\H^2+2(de^3\zeta_\Q^3+d^\vee e^4\zeta_\Q^2)(-e-4\zeta_\Q)\zeta_\H^2\right)\\
        =&{j_D}_*\left((15d+20g-20)e^4\zeta_\Q^3\zeta_\H^2\right),
    \end{align*}
    where $i_D^\dagger:\tilde\Delta_\Gamma\hookto\tilde D_1$ is the inclusion and $e^4\zeta_\Q^3\zeta_\H^2$ is the class of a point in $\tilde D_1$. So
    \begin{align}\label{tantilD}
        \ll i_D^{\dagger*}c_1(\T_{\tilde D_1})\rr=(15d+20g-20)P,
    \end{align}
    where $P\in\NN(\tilde\Delta_\Gamma)$ stands for the class of a point.

    \medskip

    To compute $\ll D_1^\dagger\rr$, we also need the class $\ll i^{\dagger*}c_1(\T_B)\rr$. By applying Lemma \ref{blowupch} to diagram (\ref{blowup}), we have
    
    \begin{align*}
        c_1(\T_B)=&\pi^*c_1(\T_{G\times G})+j_*(1-\codim_{G\times G}\Delta_G)\\\nonumber
        =&5\pi^*(\sigma_1\otimes1+1\otimes\sigma_1)-j_*(5).
    \end{align*}

    Meanwhile, from (\ref{gamgeo4}) we have
    \begin{align*}
        [\tilde\Delta_\Gamma]=d^\vee j_*(\bar\sigma_{32}\zeta^5)+(5d^\vee+2g-2)j_*(\bar\sigma_{33}\zeta^4)\in\A(B).
    \end{align*}
    Thus
    \begin{align*}
        i^\dagger_*i^{\dagger*}c_1(\T_B)=&[\tilde\Delta_\Gamma]c_1(\T_B)\\
        =&j_*\left((d^\vee\bar\sigma_{32}\zeta^5+(5d^\vee+2g-2)\bar\sigma_{33}\zeta^4)(10\bar\sigma_1+5\zeta)\right)\\
        =&j_*\left(10d^\vee\bar\sigma_{33}\zeta^5+5d^\vee\bar\sigma_{32}\zeta^6+5(5d^\vee+2g-2)\bar\sigma_{33}\zeta^5\right)\\
        =&j_*\left((10d^\vee+10g-10)\bar\sigma_{33}\zeta^5\right).
    \end{align*}

    Hence,
    \begin{align}\label{chtanB}
        \ll i^{\dagger*}c_1(\T_B)\rr=(10d^\vee+10g-10)P.
    \end{align}

    \medskip

    Now, we know that $[D_1^\dagger]=\pi^{\dagger*}[\tilde D_1]-j^\dagger_*(\mu)$ for some class $\mu\in\A(E^\dagger)$. To solve $\mu$, we compare the coefficients in $[E^\dagger][D_1^\dagger]=[E^\dagger\cap D_1^\dagger]$. On the one hand, we have
    \begin{align*}
        [E^\dagger][D_1^\dagger]=&j^\dagger_*(1)\left(\pi^{\dagger*}[\tilde D_1]-j^\dagger_*(\mu)\right)=j^\dagger_*\left(\pi^{\dagger*}_Ei^{\dagger*}[\tilde D_1]+\mu\zeta^\dagger\right)=j^\dagger_*\left(\mu\zeta^\dagger\right)
    \end{align*}
    as $[\tilde D_1]$ is of codimension $2$, where $\zeta^\dagger$ is the hyperplane section class of the projective bundle $E^\dagger\cong\P(\N_{\tilde\Delta_\Gamma,B})$.

    On the other hand, $[E^\dagger\cap D_1^\dagger]\in\A(E^\dagger)$ can be treated as the class of the subbundle $\P(\N_{\tilde\Delta_\Gamma,\tilde D_1})$ in $E^\dagger\cong\P(\N_{\tilde\Delta_\Gamma,B})$. Thus, thanks to Lemma \ref{subproj}, we see that
    \begin{align*}
        [E^\dagger\cap D_1^\dagger]=&j^\dagger_*\left((\zeta^\dagger)^2+\pi^{\dagger*}_Ec_1(\N_{\tilde\Delta_\Gamma,B}/\N_{\tilde\Delta_\Gamma,\tilde D_1})\zeta^\dagger\right)\\\nonumber
        =&j^\dagger_*\left((\zeta^\dagger)^2+\pi^{\dagger*}_E(i^{\dagger*}c_1(\T_B)-i_D^{\dagger*}c_1(\T_{\tilde D_1}))\zeta^\dagger\right).
    \end{align*}

    Hence $\mu=\zeta^\dagger+\pi_E^{\dagger*}\left(i^{\dagger*}c_1(\T_B)-i_D^{\dagger*}c_1(\T_{\tilde D_1})\right)$ and
    \begin{align*}
        [D_1^\dagger]=&\pi^{\dagger*}\left(\pi^*(\sigma_2\otimes\sigma_0+\sigma_1\otimes\sigma_1+\sigma_0\otimes\sigma_2)-j_*(5\bar\sigma_1+3\zeta)\right)\\\nonumber
        &-j^\dagger_*\left(\zeta^\dagger+\pi_E^{\dagger*}\left(i^{\dagger*}c_1(\T_B)-i_D^{\dagger*}c_1(\T_{\tilde D_1})\right)\right).
    \end{align*}

    In particular, if we write the class of a fibre in $\NN(E^\dagger)$ by $F:=\pi_E^{\dagger*}(P)$, then
    \begin{align*}
        \ll D_1^\dagger\rr=&\ll\pi^{\dagger*}\left(\pi^*(\sigma_2\otimes\sigma_0+\sigma_1\otimes\sigma_1+\sigma_0\otimes\sigma_2)-j_*(5\bar\sigma_1+3\zeta)\right)\rr\\\nonumber
        &-j^\dagger_*\left(\ll\zeta^\dagger\rr+\pi_E^{\dagger*}\left(\ll i^{\dagger*}c_1(\T_B)\rr-\ll i_D^{\dagger*}*c_1(\T_{\tilde D_1})\rr\right)\right)\\\nonumber
        =&\ll\pi^{\dagger*}\left(\pi^*(\sigma_2\otimes\sigma_0+\sigma_1\otimes\sigma_1+\sigma_0\otimes\sigma_2)-j_*(5\bar\sigma_1+3\zeta)\right)\rr\\\nonumber
        &-j^\dagger_*\left(\ll\zeta^\dagger\rr+(10d^\vee-15d-10g+10)F\right).
    \end{align*}

    \subsection{The class $\ll\Gamma^\dagger\rr$ and the product $\ll D_1^\dagger\rr\ll\Gamma^\dagger\rr$}\hfill\\
    Now we compute $\ll\Gamma^\dagger\rr$. To begin, we know that $[\Gamma^\dagger]=\pi^{\dagger*}[\tilde\Gamma]-j^\dagger_*(\nu)$ for some $\nu\in \A(E^\dagger)$. On the one hand, we have
    \begin{align*}
        [E^\dagger][\Gamma^\dagger]=j^\dagger_*(1)\left(\pi^{\dagger*}[\tilde\Gamma]-j^\dagger_*(\nu)\right)
        =j^\dagger_*\left(\pi_E^{\dagger*}i^{\dagger*}[\tilde\Gamma]+\nu\zeta^\dagger\right)=j^\dagger_*\left(\nu\zeta^\dagger\right).
    \end{align*}

    On the other hand, we see that $E^\dagger\cap\Gamma^\dagger\cong\P(\N_{\tilde\Delta_\Gamma/\tilde\Gamma})$ is a projective subbundle of $E^\dagger\cong\P(\N_{\tilde\Delta_\Gamma/B})$. Thus Lemma \ref{subproj} yields
    \begin{align}\label{ddelta}
        j^\dagger_*[E^\dagger\cap\Gamma^\dagger]=&j^\dagger_*\left((\zeta^{\dagger})^{10}+\pi_E^{\dagger*}c_1\left(\N_{\tilde\Delta_\Gamma,B}/\N_{\tilde\Delta_\Gamma,\tilde\Gamma}\right)(\zeta^{\dagger})^9\right)\\\nonumber
        =&j^\dagger_*\left((\zeta^{\dagger})^{10}+\pi_E^{\dagger*}\left(i^{\dagger*}c_1(\T_B)-\iota_{\tilde\Delta_\Gamma\tilde\Gamma}^*c_1(\T_{\tilde\Gamma})\right)(\zeta^{\dagger})^9\right).
    \end{align}

    Hence $\nu=(\zeta^\dagger)^9+\pi_E^{\dagger*}\left(i^{\dagger*}c_1(\T_B)-\iota_{\tilde\Delta_\Gamma\tilde\Gamma}^*c_1(\T_{\tilde\Gamma})\right)(\zeta^{\dagger})^8$ and, thanks to Lemma \ref{tilgam},
    \begin{align*}
        [\Gamma^\dagger]=&\pi^{\dagger*}\left((d^\vee)^2\pi^*(\sigma_{32}\otimes\sigma_{32})-d^\vee j_*\left(\bar\sigma_{32}\zeta^4\right)-(5d^\vee+2g-2)j_*(\bar\sigma_{33}\zeta^3)\right)\\
        &-j^\dagger_*\left((\zeta^\dagger)^9+\pi_E^{\dagger*}\left(i^{\dagger*}c_1(\T_B)-\iota_{\tilde\Delta_\Gamma\tilde\Gamma}^*c_1(\T_{\tilde\Gamma})\right)(\zeta^{\dagger})^8\right)
    \end{align*}

    Since $\tilde\Delta_\Gamma$ is the diagonal of $\tilde\Gamma\cong\Gamma\cong X\times X$, we have $\ll\iota_{\tilde\Delta_\Gamma\tilde\Gamma}^*c_1(\T_{\tilde\Gamma})\rr=(4-4g)P$, where $P$ is the class of a point. Hence, together with (\ref{chtanB}) we arrive at
    \begin{align*}
        \ll\Gamma^\dagger\rr=&\ll\pi^{\dagger*}\left((d^\vee)^2\pi^*(\sigma_{32}\otimes\sigma_{32})-d^\vee j_*\left(\bar\sigma_{32}\zeta^4\right)-(5d^\vee+2g-2)j_*(\bar\sigma_{33}\zeta^3)\right)\rr\\\nonumber
        &-j^\dagger_*\left(\ll\zeta^\dagger\rr^9+\pi_E^{\dagger*}\left(\ll i^{\dagger*}c_1(\T_B)\rr-\ll\iota_{\tilde\Delta_\Gamma\tilde\Gamma}^*c_1(\T_{\tilde\Gamma})\rr\right)\ll\zeta^{\dagger}\rr^8\right)\\\nonumber
        =&\ll\pi^{\dagger*}\left((d^\vee)^2\pi^*(\sigma_{32}\otimes\sigma_{32})-d^\vee j_*\left(\bar\sigma_{32}\zeta^4\right)-(5d^\vee+2g-2)j_*(\bar\sigma_{33}\zeta^3)\right)\rr\\\nonumber
        &-j^\dagger_*\left(\ll\zeta^\dagger\rr^9+(10d^\vee+14g-14)F\ll\zeta^{\dagger}\rr^8\right).
    \end{align*}

    Thus
    \begin{align*}
        \ll D_1^\dagger\rr\ll\Gamma^\dagger\rr=&\ll\left(\pi^{\dagger*}[\tilde D_1]-j^\dagger_*(\mu)\right)\left(\pi^{\dagger*}[\tilde\Gamma]-j^\dagger_*(\nu)\right)\rr\\
        =&\pi^{\dagger*}\ll[\tilde D_1][\tilde\Gamma]\rr-j^\dagger_*\ll \mu\rr\ll\nu\rr\ll\zeta^\dagger\rr\\
        =&\pi^{\dagger*}\ll(d^\vee)^2\pi^*(\sigma_{32}\otimes\sigma_{32})- j_*\left(d^\vee\bar\sigma_{32}\zeta^4+(5d^\vee+2g-2)\bar\sigma_{33}\zeta^3\right)\rr\\
        &\qquad\ll\left(\pi^*(\sigma_2\otimes\sigma_0+\sigma_1\otimes\sigma_1+\sigma_0\otimes\sigma_2)-j_*(5\bar\sigma_1+3\zeta)\right)\rr\\
        &-j^\dagger_*\left(\ll\zeta^\dagger\rr+(10d^\vee-15d-10g+10)F\right)\\
        &\qquad\left(\ll\zeta^\dagger\rr^9+(10d^\vee+14g-14)F\ll\zeta^{\dagger}\rr^8\right)\ll\zeta^\dagger\rr\\
        =&(d^\vee)^2\pi^{\dagger*}\pi^*\ll\sigma_{33}\otimes\sigma_{33}\rr\\
        &\qquad-\pi^{\dagger*}j_*\ll5d^\vee\bar\sigma_{33}\zeta^5+3d^\vee\bar\sigma_{32}\zeta^6+3(5d^\vee+2g-2)\bar\sigma_{33}\zeta^5\rr\\
        &-j^\dagger_*\ll\zeta^\dagger\rr^{11}-(20d^\vee-15d+4g-4)j^\dagger_*F\ll\zeta^\dagger\rr^{10}.
    \end{align*}

    Thanks to (\ref{chtanB}) and $\tilde\Delta_\Gamma\cong X$, we have
    \begin{align*}
        \ll c_1(\N_{\tilde\Delta_\Gamma,B})\rr=\ll i^{\dagger*}c_1(\T_B)-c_1(\T_{\tilde\Delta_\Gamma})\rr=(10d^\vee+12g-12)P.
    \end{align*}
    Together with $c_1(\T_G)=5\sigma_1$, we see that $\zeta^6=-5\bar\sigma_1\zeta^5$ and $\ll\zeta^\dagger\rr^{11}=-(10d^\vee+12g-12)F\ll\zeta^\dagger\rr^{10}$. Also observe that $\pi^{\dagger*}\pi^*\ll\sigma_{33}\otimes\sigma_{33}\rr=\pi^{\dagger*}j_*\ll\bar\sigma_{33}\zeta^5\rr=j^\dagger_*F\ll\zeta^\dagger\rr^{10}$ is the class of a point in $\N(B^\dagger)$, then the computation above becomes
    \begin{align}\label{daggerint}
        \ll D_1^\dagger\rr\ll\Gamma^\dagger\rr=&(d^\vee)^2P-\pi^{\dagger*}j_*\ll5d^\vee\bar\sigma_{33}\zeta^5-15d^\vee\bar\sigma_{33}\zeta^5+(15d^\vee+6g-6)\bar\sigma_{33}\zeta^5\rr\\\nonumber
        &+(10d^\vee+12g-12)j^\dagger_*F\ll\zeta^\dagger\rr^{10}-(20d^\vee-15d+4g-4)j^\dagger_*F\ll\zeta^\dagger\rr^{10}\\\nonumber
        =&\left((d^\vee)^2-15d^\vee+15d+2g-2\right)P.
    \end{align}

    \subsection{The class $\ll\epsilon_{\Delta^\dagger_\Gamma}\rr$}\hfill\\
    In this section, we compute the class $\ll\epsilon_{\Delta^\dagger_\Gamma}\rr$. To begin, we first observe that, by Lemma \ref{theC}, $\Delta^\dagger_\Gamma$ is the diagonal of $\Gamma^\dagger\cong\tilde\Gamma\cong\Gamma:=\gamma(X)\times\gamma(X)\cong X\times X$, so $\Delta^\dagger_\Gamma\cong X$ and $\iota_{\Delta^\dagger_\Gamma\Gamma^\dagger}:\Delta^\dagger_\Gamma\cong X\hookto\Gamma^\dagger\cong X\times X$ is the inclusion of the diagonal. Thus, by identifying $\A(\Delta^\dagger_\Gamma),\A(\Gamma^\dagger)$ and $\A(X),\A(X\times X)\cong\A(X)\otimes\A(X)$, writing the canonical class of $X$ by $K:=c_1(\T^*_X)$, and recalling that $\deg K=2g-2$ with $g$ the genus of $X$, we immediately obtain that
    \begin{align}\label{tancur}
        \ll c_1(\T_{\Delta^\dagger_\Gamma})\rr=(2-2g)P
    \end{align}
    and
    \begin{align}\label{tangam}
        \ll\iota_{\Delta^\dagger_\Gamma\Gamma^\dagger}^*[\Gamma^\dagger]\rr=\ll[\Delta_X](-K\otimes1-1\otimes K)\rr=4-4g.
    \end{align}

\medskip

    Meanwhile, by applying Lemma \ref{blowupch} to diagram (\ref{BlowUp}), we obtain
    \begin{align*}
        c_1(\T_{B^\dagger})=&\pi^{\dagger*}c_1(\T_B)+j^\dagger_*(1-\codim_B\tilde\Delta_\Gamma)\\
        =&\pi^{\dagger*}\left(5\pi^*(\sigma_1\otimes1+1\otimes\sigma_1)-j_*(5)\right)-j^\dagger_*(10),
    \end{align*}
    and from (\ref{ddelta}) we have
    \begin{align*}
        \ll\Delta_\Gamma^\dagger\rr=&j^\dagger_*\ll E^\dagger\cap\Gamma^\dagger\rr\\
        =&j^\dagger_*\left(\ll\zeta^{\dagger}\rr^{10}+\pi_E^{\dagger*}\ll i^{\dagger*}c_1(\T_B)-\iota_{\tilde\Delta_\Gamma\tilde\Gamma}^*c_1(\T_{\tilde\Gamma})\rr\ll\zeta^{\dagger}\rr^9\right)\\
        =&j^\dagger_*\left(\ll\zeta^{\dagger}\rr^{10}+(10d^\vee+14g-14)F\ll\zeta^{\dagger}\rr^9\right).
    \end{align*}

    Thus, with (\ref{chtanB}),  we obtain
    \begin{align*}
        \iota_{\Delta^\dagger_\Gamma*}\iota_{\Delta^\dagger_\Gamma}^*\ll c_1(\T_{B^\dagger})\rr=&\ll\Delta_\Gamma^\dagger\rr\ll c_1(\T_{B^\dagger})\rr\\
        =&j^\dagger_*\left(\ll\zeta^{\dagger}\rr^{10}+(10d^\vee+14g-14)F\ll\zeta^{\dagger}\rr^9\right)\ll(\pi_E^{\dagger*}i^{\dagger*}c_1(\T_B)+10\zeta^\dagger\rr\\
        =&j^\dagger_*\left(\ll\zeta^{\dagger}\rr^{10}+(10d^\vee+14g-14)F\ll\zeta^{\dagger}\rr^9\right)\left((10d^\vee+10g-10)F+10\ll\zeta^\dagger\rr\right)\\
        =&j^\dagger_*\left(10\ll\zeta^{\dagger}\rr^{11}+(110d^\vee+150g-150)F\ll\zeta^{\dagger}\rr^{10}\right)\\
        =&(10d^\vee+30g-30)j^\dagger_*F\ll\zeta^{\dagger}\rr^{10},
    \end{align*}
    and so 
    \begin{align}\label{tanB}
        \iota_{\Delta^\dagger_\Gamma}^*\ll c_1(\T_{B^\dagger})\rr=(10d^\vee+30g-30)P,
    \end{align}
    where $\iota_{\Delta^\dagger_\Gamma}:\Delta^\dagger_\Gamma\hookto B^\dagger$ is the inclusion map.

\medskip

    For the term $\ll\iota_{\Delta^\dagger_\Gamma D_1^\dagger}^*c_1(D_1^\dagger)\rr$, we consider the blowup diagram
    \begin{align}\label{ddblowup}
            \xymatrixcolsep{2pc}
            \xymatrix{
            E_D^\dagger\ar@{^{(}->}[r]^-{j_D^\dagger}\ar[d]_{\pi_{E_D}^\dagger}
            &D_1^\dagger\cong\Bl_{\tilde\Delta_\Gamma}\tilde D_1\ar[d]^{\pi_D^\dagger}\\
            \tilde\Delta_\Gamma\ar@{^{(}->}[r]^{i_D^\dagger}&\tilde D_1.
            }
        \end{align}
    
    Observe that $\Delta^\dagger_\Gamma\cong\P(\N_{\tilde\Delta_\Gamma,\tilde\Gamma})$ can be treated as a projective subbundle of $E_D^\dagger:=\P(\N_{\tilde\Delta_\Gamma,\tilde D_1})$ since, from Lemma \ref{theC}, we know that $\tilde\Gamma$ and $\tilde D_1$ tangent along $\tilde\Delta_\Gamma$. Thus, due to Lemma \ref{subproj}, we see that 
    \begin{align*}
        [\Delta^\dagger_\Gamma]=&(\zeta^\dagger_D)^8+\pi_D^{\dagger*}c_1(\N_{\tilde\Delta_\Gamma,\tilde D_1}/\N_{\tilde\Delta_\Gamma,\tilde\Gamma})(\zeta^\dagger_D)^7\\
        =&(\zeta^\dagger_D)^8+\pi_D^{\dagger*}\left(i_D^{\dagger*}c_1(\T_{\tilde D_1})-\iota_{\tilde\Delta_\Gamma\tilde\Gamma}^*c_1(\T_{\tilde\Gamma})\right)(\zeta^\dagger_D)^7
    \end{align*}
    as a class in $\A(E_D^\dagger)$, where $\zeta^\dagger_D$ is the hyperplane section class of the projective bundle $E_D^\dagger$. From (\ref{tantilD}) and the fact that $\ll\iota_{\tilde\Delta_\Gamma\tilde\Gamma}^*c_1(\T_{\tilde\Gamma})\rr=(4-4g)P$ we then obtain
    \begin{align*}
        \ll\Delta^\dagger_\Gamma\rr=j^\dagger_{D*}\ll\zeta^\dagger_D\rr^8+(15d+24g-24)j^\dagger_{D*}F_D\ll\zeta^\dagger_D\rr^7\in\NN(D_1^\dagger),
    \end{align*}
    where $F_D$ is the class of a fibre of the bundle $E_D^\dagger$.

    On the other hand, by applying Lemma \ref{blowupch} to the diagram (\ref{ddblowup}), we deduce that
    \begin{align*}
        c_1(\T_{D_1^\dagger})=\pi_D^{\dagger*}c_1(\T_{\tilde D_1})-j^\dagger_{D*}(8).
    \end{align*}

    Thus, with (\ref{tantilD}),
    \begin{align*}
        \iota_{\Delta^\dagger_\Gamma D_1^\dagger*}\iota_{\Delta^\dagger_\Gamma D_1^\dagger}^*\ll c_1(\T_{D_1^\dagger})\rr=&\ll\Delta^\dagger_\Gamma\rr\ll c_1(\T_{D_1^\dagger})\rr\\
        =&j^\dagger_{D*}\left(\ll\zeta^\dagger_D\rr^8+(15d+24g-24)F_D\ll\zeta^\dagger_D\rr^7\right)\ll\pi_D^{\dagger*}c_1(\T_{\tilde D_1})-j^\dagger_{D*}(8)\rr\\
        =&j^\dagger_{D*}\left(\ll\zeta^\dagger_D\rr^8+(15d+24g-24)F_D\ll\zeta^\dagger_D\rr^7\right)\\
        &\qquad\left(\pi_{E_D}^{\dagger*}i_D^{\dagger*}c_1(\T_{\tilde D_1})+8\ll\zeta_D^\dagger\rr\right)\\
        =&j^\dagger_{D*}\left(\ll\zeta^\dagger_D\rr^8+(15d+24g-24)F_D\ll\zeta^\dagger_D\rr^7\right)\\
        &\qquad\left((15d+20g-20)F_D+8\ll\zeta_D^\dagger\rr\right)\\
        =&j^\dagger_{D*}\left(8\ll\zeta_D^\dagger\rr^9+(135d+212g-212)F_D\ll\zeta^\dagger_D\rr^8\right).
    \end{align*}
    Since $\ll c_1(\N_{\tilde\Delta_\Gamma,\tilde D_1})\rr=\ll i_D^{\dagger*}c_1(\T_{\tilde D_1})-c_1(\T_{\tilde\Delta_\Gamma})\rr=(15d+22g-22)P$, we have $\ll\zeta_D^\dagger\rr^9=-(15d+22g-22)F_D\ll\zeta_D^\dagger\rr^8$, so the computation above yields
    \begin{align}\label{tanD}
        \iota_{\Delta^\dagger_\Gamma D_1^\dagger}^*\ll c_1(\T_{D_1^\dagger})\rr=(15d+36g-36)P.
    \end{align}

\medskip

    Therefore, by (\ref{tancur}), (\ref{tangam}), (\ref{tanB}), and (\ref{tanD}) we conclude that the numerical class of the excess intersection on the diagonal is
    \begin{align}\label{excterm}
        \ll\epsilon_{\Delta^\dagger_\Gamma}\rr=&\ll\iota_{\Delta^\dagger_\Gamma}^*c_1(\T_{B^\dagger})\rr-\ll\iota_{\Delta^\dagger_\Gamma\Gamma^\dagger}^*c_1(\T_{\Gamma^\dagger})\rr-\ll\iota_{\Delta^\dagger_\Gamma D^\dagger_1}^*c_1(\T_{D^\dagger_1})\rr+\ll c_1(\T_{\Delta^\dagger_\Gamma})\rr\\\nonumber
        =&(10d^\vee+30g-30)P-(4-4g)P-(15d+36g-36)P+(2-2g)P\\\nonumber
        =&(10d^\vee-15d-4g+4)P
    \end{align}

\subsection{The skew curves in $\P^4$}\hfill\\

    Finally, we return to the excess intersection formula and deduce the following Lemma.
    \begin{lem}\label{length}
        Let $X\subset\P^4$ be a smooth degree $d$ genus $g$ curve with its Gauss map a diffeomorphism to its image. Then the number of nonskew pairs of tangent lines of $X$, counting possibly degenerate third osculating planes, is
        $$(2d+2g-2)^2-20d-44g+44.$$
    \end{lem}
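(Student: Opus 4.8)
The plan is to extract the count directly from the symmetric excess intersection formula stated above, applied to the pair $D_1^\dagger,\Gamma^\dagger\subset B^\dagger$, using the two numerical identities (\ref{daggerint}) and (\ref{excterm}) already established. First I would record that $\Gamma^\dagger\cong\gamma(X)\times\gamma(X)$ is smooth, that $\tilde D_1=\Bl_{\hat\Delta_D}\hat D_1$ is smooth because $\hat D_1$ is, hence so is $D_1^\dagger=\Bl_{\tilde\Delta_\Gamma}\tilde D_1$, and that $\Delta^\dagger_\Gamma\cong X$ is a smooth curve; thus all of these are locally complete intersections in the smooth variety $B^\dagger$ and the excess intersection formula applies. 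By Lemma \ref{theC}, under the standing hypothesis the only positive-dimensional component of $D_1^\dagger\cap\Gamma^\dagger$ is the reduced irreducible curve $\Delta^\dagger_\Gamma$; by the construction of the two blowups, every remaining component is an isolated point lying over an ordered pair $(x,y)$ with $x\neq y$ and $T_{x,X}\cap T_{y,X}\neq\emptyset$, that is, over a nonskew pair of tangent lines of $X$ (a point where the third osculating plane degenerates contributes additional such point terms, which is precisely what the phrase ``counting possibly degenerate third osculating planes'' accounts for). Writing $P$ for the class of a point in $\NN(B^\dagger)$, the symmetric excess intersection formula therefore yields
\[
(\text{number of nonskew pairs of tangent lines of }X)\cdot P=\ll D_1^\dagger\rr\ll\Gamma^\dagger\rr-\ll\epsilon_{\Delta^\dagger_\Gamma}\rr .
\]

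It then remains only to substitute and simplify. By (\ref{daggerint}), $\ll D_1^\dagger\rr\ll\Gamma^\dagger\rr=\left((d^\vee)^2-15d^\vee+15d+2g-2\right)P$, and by (\ref{excterm}), $\ll\epsilon_{\Delta^\dagger_\Gamma}\rr=(10d^\vee-15d-4g+4)P$. Subtracting and collecting terms gives the count $(d^\vee)^2-25d^\vee+30d+6g-6$. Inserting $d^\vee=2d+2g-2$ from Lemma \ref{dvee}, we get
\[
(2d+2g-2)^2-25(2d+2g-2)+30d+6g-6=(2d+2g-2)^2-20d-44g+44 ,
\]
which is the claimed formula.

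The one point that I expect to need care --- everything else being bookkeeping on top of the earlier lemmas --- is the assertion that $\Delta^\dagger_\Gamma$ accounts for the \emph{entire} positive-dimensional part of $D_1^\dagger\cap\Gamma^\dagger$, so that the residual excess is a finite sum of point classes and ``the number'' is well defined; this is exactly the content supplied by Lemma \ref{theC}. One should also verify the hypotheses of the excess intersection formula (locally complete intersection subvarieties, connected intersection components), all of which follow from the smoothness recorded above. Finally, for curves whose third osculating plane degenerates at finitely many points one can either specialize from the nondegenerate case or reinspect the local model in the proof of Lemma \ref{theC} near such a point; in either case the only effect is to inflate the point-count term, consistently with the stated qualifier.
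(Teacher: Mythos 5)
Your proposal is correct and follows essentially the same route as the paper: apply the symmetric excess intersection formula to $D_1^\dagger$ and $\Gamma^\dagger$ in $B^\dagger$, subtract the diagonal excess term (\ref{excterm}) from the product (\ref{daggerint}), and substitute $d^\vee=2d+2g-2$; your intermediate expression $(d^\vee)^2-25d^\vee+30d+6g-6$ agrees with the paper's $(d^\vee)^2-10d^\vee-24g+24$ after that substitution. The supporting points you flag (smoothness of the relevant subvarieties, Lemma \ref{theC} guaranteeing that $\Delta^\dagger_\Gamma$ is the only positive-dimensional component) are exactly the content of Section \ref{ExcInt} that the paper's proof cites.
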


    \begin{proof}
        From the argument in Section \ref{ExcInt}, formulae (\ref{daggerint}) and (\ref{excterm}), the desired number is given by
        \begin{align*}
            \ll D_1^\dagger\rr\ll\Gamma^\dagger\rr-\ll(\iota_{\Delta^\dagger_\Gamma})_*\epsilon_{\Delta^\dagger_\Gamma}\rr
            =&\left((d^\vee)^2-15d^\vee+15d+2g-2\right)P-(10d^\vee-15d-4g+4)P\\
            =&((d^\vee)^2-10d^\vee-24g+24)P,
        \end{align*}
        where $d^\vee=2d+2g-2$.
    \end{proof}

    \begin{eg}
        Let $X\subset\P^4$ be a canonical genus $5$ curve given by intersecting three generic quadratic hyper surfaces. Then we have $g=5$ and $d=8$ and so by the above lemma the number of nonskew pairs of tangent lines is
        $$(2d+2g-2)^2-20d-44g+44=240.$$
        We have verified this number with Macaulay 2. Note that this number, curiously, coincides with the cardinality of a Steiner complex of a genus $5$ curve, see \cite[Proposition 5.4.7]{CAG}. The relation between theta characteristics and tangents of canonical curves has been observed in \cite{TerrCurve} as well as \cite{Ciro}, but the specific relation to the Steiner complex remains unknown to the author.
    \end{eg}

    \begin{thm}
        The only algebraically skew curves, up to linear equivalence, in $\P^4$ are the rational normal curve and the elliptic normal curves.
    \end{thm}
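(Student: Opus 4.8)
The plan is to convert algebraic skewness into the statement that the count in Lemma~\ref{length} vanishes, and then solve the resulting Diophantine equation; the geometric content is all contained in that lemma. Before invoking it I would make two reductions. First, $X$ must be irreducible: if $X$ were a disjoint union of smooth curves, then since the third osculating planes are everywhere nondegenerate each component spans at least a $\P^3$ and hence has a $2$-dimensional tangent variety in $\P^4$, and two surfaces in $\P^4$ always meet, producing a point lying on a tangent line of each of two distinct components --- contradicting skewness. Being smooth and irreducible, $X$ is then integral. Second, I may assume $X$ is nondegenerate, for if $X$ were contained in a hyperplane $\P^3\subset\P^4$ it would be the twisted cubic by Theorem~\ref{P3}. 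Finally, skewness forces the Gauss map $\gamma\colon X\to\GGr(1,4)$ to be injective (equal tangent spaces at distinct points intersect), while nondegeneracy of the third osculating planes makes $f(0),f'(0),f''(0)$ independent at every point, so $\gamma$ is an immersion; hence $\gamma$ is a diffeomorphism onto its image and Lemma~\ref{length} applies to $X$.

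Since $X$ is algebraically skew it has no nonskew pairs of tangent lines and no degenerate third osculating planes, so Lemma~\ref{length} gives
\[
(2d+2g-2)^2-20d-44g+44=0.
\]
Putting $u:=d+g-1$ (so that $d^\vee=2u$) and eliminating $d=u-g+1$, this is equivalent to the pair $6g=(u-2)(u-3)$ and $6d=u(11-u)$. As $X$ is nondegenerate in $\P^4$ we have $d\ge4$, so $u(11-u)\ge24$ and $3\le u\le8$; imposing integrality of $d$ and $g$ leaves precisely $u\in\{3,5,6,8\}$, that is,
\[
(d,g)\in\{(4,0),\,(5,1),\,(5,2),\,(4,5)\}.
\]

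Two of these are spurious: there is no smooth irreducible curve of degree $4$ with genus $5$ (degree-$4$ curves have genus at most $3$), and a degree-$5$ line bundle on a genus-$2$ curve has $h^0=4$ by Riemann--Roch, so no degree-$5$ genus-$2$ curve can span $\P^4$. Hence $(d,g)\in\{(4,0),(5,1)\}$, namely the rational normal quartic (unique up to $\PGL(5)$) and the elliptic normal quintics (one $\PGL(5)$-orbit for each $j$-invariant). For the converse both are genuinely algebraically skew: the rational normal quartic by Lemma~\ref{racurve}, and a degree-$5$ line bundle $L$ on an elliptic curve satisfies $h^1(L(-D))=0$ for every effective divisor $D$ with $\deg D\le4$, hence is $3$-very ample, so its embedding is skew with everywhere nondegenerate third osculating planes.

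The step I expect to be the real obstacle is not this final reduction but Lemma~\ref{length} itself --- in particular, computing the numerical classes $\ll D_1^\dagger\rr$ and $\ll\Gamma^\dagger\rr$ and the excess class $\ll\epsilon_{\Delta^\dagger_\Gamma}\rr$ in the iterated blowup $B^\dagger$, which rests on understanding $\A(\tilde D_1)$ through the factorization $\tilde D_1\to\hat D_1\cong\P(\Q)\times_{\P^4}\P(\Q)\to D_1$ together with the blowup description $\tilde D_1\cong\Bl_{\hat\Delta_D}\hat D_1$. Granting Lemma~\ref{length}, the Diophantine analysis and case elimination above are routine.
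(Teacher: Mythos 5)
Your proof is correct and follows essentially the same route as the paper: reduce algebraic skewness to the vanishing of the count in Lemma~\ref{length}, solve the resulting Diophantine equation to get $(d,g)\in\{(4,0),(5,1),(5,2),(4,5)\}$, discard the last two cases, and verify the converse for the two survivors. You go beyond the paper in a few useful ways. You justify the hypotheses needed to invoke Lemma~\ref{length} (irreducibility, via the tangent surfaces of two components necessarily meeting in $\P^4$, and the Gauss map being a diffeomorphism onto its image, which the paper's proof uses silently); you eliminate $(5,2)$ and $(4,5)$ by elementary arguments (Riemann--Roch and the plane-curve genus bound) where the paper cites Griffiths--Harris; and your converse for elliptic quintics via $3$-very ampleness of a degree-$5$ line bundle is cleaner than the paper's linear-equivalence contradiction and simultaneously handles both skewness and nondegeneracy of the third osculating planes. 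One small caveat, shared with the paper: a twisted cubic contained in a hyperplane of $\P^4$ is in fact algebraically skew under the paper's definition, so strictly the statement should either assume nondegeneracy or admit that case; your reduction flags the degenerate case but, like the paper, does not reconcile it with the stated list.
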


    \begin{proof}
        A degree $d$ genus $g$ curve $X$ in $\P^4$ is algebraically skew if and only if it has no pairs of intersecting tangent lines. Thus, by the lemma above we have $(d^\vee)^2-10d^\vee-24g+24=0$,
        which implies
        \begin{align*}
            g=\frac{1}{24}((d^\vee)^2-10d^\vee)+1.
        \end{align*}

        Thus from $d^\vee:=2d+2g-2$ we obtain
        \begin{align*}
            d=-g+\frac{1}{2}d^\vee+1=-\frac{1}{24}(d^\vee)^2+\frac{22}{24}d^\vee
        \end{align*}

        By the nondegeneracy of $X$, we have $d\ge4$. Together with the quadratic equation above, we see that $6\le d^\vee\le16$ and $d\le5$. Furthermore, since both $d$ and $d^\vee$ are integers, the remaining options are $(g,d)=(0,4),(1,5),(2,5),(5,4)$. However, since these are nondegenerate smooth curves in $\P^4$, the cases $(5,2),(4,5)$ are in fact not possible, see \cite[p.253]{grif_harr}. Therefore, the remaining possible cases are the rational normal curve with $d=4$ and $g=0$, and the elliptic normal curves with $d=5$ and $g=1$.

        From Lemma \ref{racurve}, we already know that the rational normal curve is skew, so it remains to prove the following lemma.
    
    \begin{lem}
        The smooth elliptic normal curves in $\P^4$ are algebraically skew.
    \end{lem}

    \begin{proof}
        Let $X\hookto\P^4$ be any smooth elliptic normal curve. Suppose for contradiction that $X$ is not skew, then there exist $x\ne y\in X$ such that $T_x\cap T_y\ne\emptyset$. Then, for any other point $z\in X$ with $x\ne x,y$, $T_x,T_y,z$ span a $3$-space in $\P^4$. In other words, the divisor $2x+2y+z$ corresponds to the hyperplane bundle on $X$; that is, $|2x+2y+z|=\O_X(1)$. Let $z_1\ne z_2\in X$, then we would have $|2x+2y+z_1|=\O_X(1)=|2x+2y+z_2|$, which implies that $\O_X(1)\otimes\L=\O_X(1)$, where $\L:=|z_1-z_2|$. This implies that any two generic points $z_1,z_2$ on $X$ are linearly equivalent, which is a contradiction.

        A smooth elliptic normal curve is also algebraically skew. To see this, we again suppose for contradiction that there is a elliptic normal curve $X\hookto\P^4$ that is skew but not algebraically skew. Let $x\in X$ be a point with a degenerate third osculating plane $P$. Then, for any other point $z\ne x\in X$, $P$ and $z$ span a $3$-space in $\P^4$. In other words, $|4x+z|=\O_X(1)$. This again implies that $|z_1-z_2|$ is trivial for any two generic points $z_1,z_2\in X$ and again a contradiction.
    \end{proof}       
    \end{proof}

    \begin{rmk}
        Note that by taking the affine part of an elliptic normal curve $X\subset\P^4=\CP^4$ with the infinity chosen to be a hyperplane intersecting $X$ at one point with multiplicity $5$, we obtain a skew embedding of a punctured torus into $\R^8$, which gives us a nontrivial real totally skew surface in an Euclidean space of dimension $<9$.
    \end{rmk}

\bibliography{mybib}{}
\bibliographystyle{alphaurl}

\end{document}